\newtheorem{thm}{{{Theorem}}}[section]
\newtheorem{prop}[thm]{{Proposition}}
\newtheorem{lem}[thm]{{Lemma}}
\newtheorem{cor}[thm]{{Corollary}}
\newtheorem{remark}[thm]{Remark}
\numberwithin{equation}{section}
\def\N{\mathbb{N}}
\def\Z{\mathbb{Z}}
\def\Q{\mathbb{Q}}
\def\R{\mathbb{R}}
\def\C{\mathbb{C}}
\def\A{\mathbb{A}}
\def\GL{{\mathop{\mathrm{GL}}}}
\def\PGL{{\mathop{\mathrm{PGL}}}}
\def\SL{{\mathop{\mathrm{SL}}}}
\def\O{{\mathop{\mathrm{O}}}}
\def\U{{\mathop{\mathrm{U}}}}
\def\Re{{\mathop{\mathrm{Re}}}}
\def\Tr{{\mathop{\mathrm{Tr}}}}
\def\diag{{\mathop{\mathrm{diag}}}}
\def\sgn{{\mathop{\mathrm{sgn}}}}
\def\vol{{\mathop{\mathrm{vol}}}}
\def\d{{\mathrm{d}}}
\def\bsl{\backslash}
\def\inf{\infty}
\def\fin{\mathrm{fin}}
\def\fa{{\mathfrak{a}}}
\def\trep{{\mathbf{1}}}
\def\fO{{\mathfrak{O}}}
\def\cS{{\mathcal{S}}}
\def\ff{{\mathfrak{f}}}
\def\fd{{\mathfrak{d}}}
\def\us{{\underline{s}}}
\def\um{{\underline{m}}}
\def\ut{{\underline{t}}}
\def\cs{{\mathfrak{c}_0}}
\def\fD{{\mathfrak{D}}}
\numberwithin{equation}{section}
\title[The Shintani double zeta functions]
{The Shintani double zeta functions}
\author{Henry H. Kim, Masao Tsuzuki, and Satoshi Wakatsuki}
\date{\today}
\keywords{Shintani double zeta functions, prehomogeneous vector spaces}
\thanks{The first author is partially supported by NSERC grant \#482564. The second and third authors are partially supported by JSPS Grant-in-Aid for Scientific Research (C) No. 15K04795, 18K03235.}
\subjclass[2010]{11F68, 11M06, 11S90}
\address{Henry H. Kim \\
Department of mathematics \\
 University of Toronto \\
Toronto, Ontario M5S 2E4, CANADA \\
and Korea Institute for Advanced Study, Seoul, KOREA}
\email{henrykim@math.toronto.edu}
\address{Masao Tsuzuki \\
Department of Science and Technology \\
Sophia University \\
Kioi-cho 7-1 Chiyoda-ku Tokyo, JAPAN}
\email{m-tsuduk@sophia.ac.jp}
\address{Satoshi Wakatsuki \\
Faculty of Mathematics and Physics, Institute of Science and Engineering\\
Kanazawa University\\
Kakumamachi, Kanazawa, Ishikawa, 920-1192, JAPAN}
\email{wakatsuk@staff.kanazawa-u.ac.jp}
\begin{document}

\begin{abstract} 
In this paper, we give an explicit formula of the Shintani double zeta functions with any ramification in the most general setting of adeles over an arbitrary number field. Three applications of the explicit formula are given. First, we obtain a functional equation satisfied by the Shintani double zeta functions
in addition to Shintani's functional equations. 
Second, we establish the holomorphicity of a certain Dirichlet series generalizing a result by Ibukiyama and Saito. This Dirichlet series occurs in the study of unipotent contributions of the geometric side of the Arthur-Selberg trace formula of the symplectic group. 
Third, we prove an asymptotic formula of the weighted average of the central values of quadratic Dirichlet $L$-functions. 
\end{abstract}

\maketitle

\setcounter{tocdepth}{1}
\tableofcontents

\section{Introduction}
For positive integers $m$ and $n$, let $A(m,n)$ denote the number of distinct solutions to the quadratic congruence equation $x^2\equiv n \mod m$. In his original work \cite{Shintani}, Takuro Shintani introduced Dirichlet series with two complex variables 
$\underline s=(s_1,s_2)\in \C^2$
\begin{equation}\label{intro1}
\xi_i(\us) :=\sum_{m=1}^\inf \sum_{n=1}^\inf \frac{ A(4m,(-1)^{i-1} n) }{m^{s_1}n^{s_2}},\qquad \xi_i^*(\us) :=\sum_{m=1}^\inf \sum_{n=1}^\inf \frac{ A(m,(-1)^{i-1} n) }{m^{s_1}(4n)^{s_2}} \quad (i=1,2)
\end{equation}
and showed that \begin{align*}
&\Gamma\left(\tfrac {s_1+1}2\right)^{-1}s_1(2s_1-1)\zeta(2s_1)(s_2-1)(s_1-1)^2(2s_1+2s_2-3) \times \xi_i(\us),\\ 
&\Gamma\left(\tfrac {s_1+1}2\right)^{-1}s_1(2s_1-1)\zeta(2s_1)(s_2-1)(s_1-1)^2(2s_1+2s_2-3) \times \xi_i^*(\us)
\end{align*}
have analytic continuations to $\C^2$ satisfying a set of functional equations. By an explicit determination of the singular part of \eqref{intro1}, he obtained an asymptotic formula for an average of the class numbers of quadratic fields with growing discriminants. Now the Dirichlet series \eqref{intro1} are commonly referred to as the {\it Shintani double zeta functions}. 
 In later works (Diamantis-Goldfeld \cite{DG}, Ibukiyama-Saito \cite{IS2}), it is discovered that there are relations between the Shintani double zeta-functions \eqref{intro1} and the Mellin transforms of metaplectic Eisenstein series on $\GL(2)$, or the {\it $A_2$ Weyl group multiple Dirichlet series} ($A_2$-WMDS for short). The latter object inherits two functional equations, one from the $A_2$-Weyl group action on the spectral parameter of the Eisenstein series and the other from the automorphy of the Eisenstein series. The relation between the Shintani double zeta function and the $A_2$-WMDS is nicely put in a work by J. Wen (\cite[Theorem 2.7]{Wen}), where the odd discriminant part of $\xi_1(\us)+\xi_2(\us)$ is identified with an $A_2$-WMDS up to a simple factor explicitly written by the Riemann zeta function. Thus, the Shintani double zeta functions satisfy not only Shintani's functional equations \cite[Theorem 1]{Shintani}, but also a different type of functional equations as an $A_2$-WMDS. 

In this paper, we study an adelic version of the Shintani double zeta functions over any number field in a general setting with possible ramifications; we obtain the expected functional equation (Theorem~\ref{thm:funct2}) for them as prehomogeneous zeta functions, working in the framework of Shintani and employing several known results from \cite{Sato3} and \cite{Shintani}. We derive an explicit formula 
(Theorem \ref{thm:global}) for the Shintani double zeta functions in a form of multiple Dirichlet series of quadratic Hecke $L$-functions, which provides us with yet another functional equation for them (Theorem~\ref{thm:funct1}). Several applications of the explicit formula will be given. 

Let us explain our results in more detail. For simplicity, we restrict ourselves to the rational number field $\Q$ in the introduction. The algebraic group
\begin{equation}\label{intro2}
G:=\GL(1)\times \left\{ \begin{pmatrix}1&0 \\ *&* \end{pmatrix} \in \GL(2) \right\}
\end{equation}
over $\Q$ acts on the $\Q$-vector space
\begin{equation}\label{intro3}
V:= \{ x\in M(2) \mid x={}^t\!x \} \quad \text{via} \quad (a,h)\cdot x=\rho(a,h)x:=ahx{}^t\!h \qquad ((a,h)\in G).
\end{equation}
This pair $(G,V)$ is a prehomogeneous vector space, i.e., there is an open dense $G$-orbit in $V$. Basic relative invariants $P_1$ and $P_2$ on $V$ are given by
\begin{equation}\label{intro4}
P_1(x):=x_1,\quad P(x)=P_2(x):=x_{12}^2-x_1x_2=-\det(x) \quad \text{for $x=\left(\begin{smallmatrix}x_1&x_{12} \\ x_{12}&x_2 \end{smallmatrix}\right) \in V$}.
\end{equation}
We set
\begin{equation}\label{intro5}
V^0:=\{x\in V \mid P_1(x)\neq 0 , \;\; P_2(x)\neq 0\}.
\end{equation}
Let $S$ be a finite set of places of $\Q$ such that $\inf\in S$ and set $\Q_S:=\prod_{v\in S}\Q_v$. For $x=(x_v)_{v\in S} \in \Q_S^\times$, set $|x|_S:=\prod_{v\in S}|x_v|_v$ with $|\; |_v$ being the valuation of $\Q_v$. Fix an element $\delta_S\in \Q_S^\times / (\Q_S^\times)^2$ for a while. Depending on $S$ and $\delta_S$, we have a discrete set $L_S(\delta_S):=V(\Q)\cap (V^0(\Q_S,\delta_S)\prod_{p\not\in S}V(\Z_p))$ endowed with the natural action of the $S$-unit group $\Gamma_S:= G(\Q)\cap (G(\Q_S)\prod_{p\not\in S}G(\Z_p))$ of $G$, where $V^0(\Q_S,\delta_S):=\{x\in V^0(\Q_S)\mid P_2(x)\in \delta_S(\Q_S^\times)^2\}$.
According to the usual manner of prehomogeneous vector spaces, we define the prehomogeneous zeta function $\xi^S(\us,\delta_S)$ by
\begin{equation}\label{intro6}
\xi^S(\us,\delta_S):= \sum_{x\in\Gamma_S\bsl L_S(\delta_S)} \frac{1}{\#(\Gamma_{S,x})}\frac{1}{|P_1(x)|_S^{s_1}\, |P_2(x)|_S^{s_2}},
\end{equation}
where $\Gamma_{S,x}:=\{ \gamma\in \Gamma_S \mid \gamma\cdot x=x \}$. The series \eqref{intro6} is absolutely convergent for $\Re(s_1)>1$ and $\Re(s_2)>1$.
If $S=\{\inf\}$, then we have $\xi^S(\us,1)=2^{2s_2-1} \xi_1^*(\us)$ and $\xi^S(\us,-1)=2^{2s_2-1} \xi_2^*(\us)$. Hence, $\xi^S(\us,\delta_S)$ is viewed as a natural generalization of the Shintani double zeta functions. To explain our explicit formula of $\xi^S(\us,\delta_S)$, we need some additional notations. Let $\A$ denote the adele ring of $\Q$. In the usual manner, a Dirichlet character is identified with a character of $\Q^\times \R_{>0}\bsl \A^\times$. For each character $\chi=\otimes_v \chi_v$ of $\Q^\times\R_{>0}\bsl \A^\times$, let $L^S(s,\chi)$ denote the Dirichlet $L$-function without the $S$-factors defined as the partial Euler product $L^S(s,\chi):=\prod_{p\not\in S}L_p(s,\chi_p)$, where $L_p(s,\chi_v):=(1-\chi_p(p)p^{-s})^{-1}$ if $\chi_p$ is unramified, and $L_p(s,\chi_p):=1$ if $\chi_p$ is ramified. Note that $\zeta^S(s):=L^S(s,{\bf 1})$ coincides with the Riemann zeta function without the $S$-factors. Let $f_\chi=\prod_{p<\infty}p^{f(\chi_p)} \in \Z_{>0}$ be the conductor of $\chi$, and  $f_\chi^S:=\prod_{p\not\in S}p^{f(\chi_p)}$ its prime-to-$S$ part. Let $\widehat{\Q_S^\times/( \Q_S^\times)^2}$ denote the set of real valued characters of $\Q_S^\times$. Note that $[\Q_S^\times: (\Q_S^\times)^2]<\infty$. 

For any finite set $S$ of places of $\Q$ such that $\inf\in S$, define 
$\tilde \xi^S(\us,\omega_S)$ by
\begin{equation*}
\tilde \xi^S(\us,\omega_S):=\frac{2^{|S|+1}}{|\Bbb Q_S^\times/(\Bbb Q_S^\times)^2|\, |2|_S} \sum_{\delta_S \in \Bbb Q_S^\times/(\Bbb Q_S^\times)^2} \omega_S(\delta_S) \times \xi^S(\us,\delta_S), \quad 
\omega_S\in \widehat{\Bbb Q_S^\times/(\Bbb Q_S^\times)^2}
.
\end{equation*}

The following theorem (Theorem \ref{thm:global}) is our main result.
\begin{thm}\label{thm:main} For any $\omega_S\in \widehat {\Q_S^\times/ (\Q_S^\times)^2}$, 
\[
\tilde \xi^S(\us,\omega_S)=\zeta^S(s_1)\, \zeta^S(2s_1+2s_2-1) \sum_{\chi} \frac{L^S(s_2,\chi)}{L^S(2s_1+s_2,\chi)\, (f_\chi^S)^{s_1}}
\]
with $\chi$ moving over all real valued characters $\chi=\otimes_v \chi_v$ of $\Q^\times\R_{>0}\bsl \A^\times$ such that $\otimes_{v\in S}\chi_v=\omega_S$.
\end{thm}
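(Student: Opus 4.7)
My strategy is to unfold the defining orbit sum for $\xi^S(\us,\delta_S)$ into a double Dirichlet series, apply the averaging over $\delta_S$ to convert the discriminant data into a condition at $S$ on the quadratic character $\omega_S$, and then recognize the resulting Euler product outside $S$ as the right-hand side.

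First, I would parametrize the $\Gamma_S$-orbits on $L_S(\delta_S)$ by the basic invariants $(m,n)=(P_1(x),P_2(x))$. Writing $x=\left(\begin{smallmatrix}x_1&x_{12}\\x_{12}&x_2\end{smallmatrix}\right)$ and $h=\left(\begin{smallmatrix}1&0\\c&d\end{smallmatrix}\right)$, the action sends $x_1\mapsto ax_1$, $x_{12}\mapsto a(cx_1+dx_{12})$, and $\det x\mapsto a^2d^2\det x$, with $x_2$ determined by $x_2=(x_{12}^2-n)/m$. Thus, for each pair $(m,n)$ taken modulo the $\Gamma_S$-rescalings, the remaining orbit data is a class of $x_{12}\pmod{m}$ satisfying $x_{12}^2\equiv n\pmod{m}$, refined adelically by the integrality condition $x\in V(\Z_p)$ at every $p\notin S$. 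This presents $\xi^S(\us,\delta_S)$ as a series $\sum_{m,n}A^S_{\delta_S}(m,n)|m|_S^{-s_1}|n|_S^{-s_2}$ in which $A^S_{\delta_S}$ depends on $\delta_S$ only through the class of $n$ in $\Q_S^\times/(\Q_S^\times)^2$. The averaging $\sum_{\delta_S}\omega_S(\delta_S)\cdot$ is then Fourier inversion on the finite group $\Q_S^\times/(\Q_S^\times)^2$: it replaces the constraint on $n$ by the weight $\omega_S(n)$, and the prefactor $2^{|S|+1}/(|\Q_S^\times/(\Q_S^\times)^2|\cdot|2|_S)$ is precisely calibrated to absorb the Fourier-inversion volume and the orbit-stabilizer normalization.

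Second, since $A^S$ is defined adelically, the resulting Dirichlet series admits an Euler product expansion over $p\notin S$. At each such prime, the local factor is a finite combination of geometric series in the $p$-adic valuations of $m$ and $n$, with coefficients given by the classical count of square roots of $p^\beta u$ modulo $p^\alpha$ (with $u\in\Z_p^\times$), split by the parity of $\beta$ and the Legendre symbol of $u$. Peeling off the geometric series in $\alpha$ yields the factor $(1-p^{-s_1})^{-1}$; peeling off the series over even $\beta$ yields $(1-p^{1-2s_1-2s_2})^{-1}$; and the residual terms reorganize into the local Euler factor of $L^S(s_2,\chi)/L^S(2s_1+s_2,\chi)$, multiplied by $p^{-s_1 f(\chi_p)}$, where $\chi$ is the global quadratic character whose local components match the $u$-data at $p\notin S$ and agree with $\omega_S$ inside $S$. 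Summing over such global characters $\chi$ yields the stated identity, with $(f_\chi^S)^{-s_1}=\prod_{p\notin S}p^{-s_1 f(\chi_p)}$.

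The main obstacle is the closed-form local computation in the second step. The formula for $A_p(p^\alpha,p^\beta u)$ bifurcates on the parity of $\beta$ and on the relative size of $\alpha$ and $\beta$, producing several cases that must be summed separately, and at the even prime $p=2\notin S$ the quadratic character theory is especially delicate. The $S$-formalism conveniently confines the worst ramification behavior to places of $S$, where it is absorbed by $\omega_S$; outside $S$ the local count has a clean multiplicative structure, and the reorganization into the ratio of local $L$-factors becomes a combinatorial matching of geometric series, whence the global formula follows by assembling the Euler factors and indexing by the global characters $\chi$ with $\chi_v=\omega_{S,v}$ for $v\in S$.
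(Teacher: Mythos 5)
Your first step (unfolding the orbit sum into a double Dirichlet series with coefficients counting square roots of $n$ modulo $m$, and converting the $\delta_S$-average into the weight $\omega_S(n)$) is fine, but the second step contains the real gap. The series $\sum_{m,n}A^S(m,n)\,\omega_S(n)\,m^{-s_1}n^{-s_2}$ does \emph{not} admit an Euler product over $p\notin S$: the local count $A(p^\alpha,p^\beta u)$ depends on the quadratic class of the unit part $u$ of $n$ at $p$, and these classes at different primes are not independent variables when $n$ runs over integers --- they are tied together by quadratic reciprocity. The only way your computation can be carried out prime by prime is to first group the terms by the quadratic field attached to $n$ (write $n=Df^2$ and use $\chi_D$), and that grouping forces the conductor to enter through $n^{-s_2}=|D|^{-s_2}f^{-2s_2}$ and the $m$-sum to produce $L$-factors at $s_1$ and $s_1+2s_2$, together with extra factors $\zeta(2s_2)/\zeta(2s_1)$. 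In other words, the route you describe yields the Ibukiyama--Saito-type formula recalled in Remark \ref{rem:explicit} (and, place by place, Corollary \ref{cor:local}), not the stated identity, in which the conductor appears with exponent $s_1$ and the $L$-functions are taken at $s_2$ and $2s_1+s_2$. The paper stresses that these two formulas are genuinely different (the variables are interchanged), so asserting that ``the residual terms reorganize'' into the target local factor is not a bookkeeping matter: it is precisely the nontrivial content of the theorem.

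What is missing is the mechanism that trades the square-class decomposition of $P(x)$ for a sum over \emph{all} global quadratic characters $\chi$ with $\chi_S=\omega_S$, each weighted at every place (including ramified ones) by $\chi_v(P(x))$. In the paper this is done by the integration formula of \S\ref{sec:LL-formula} (Labesse--Langlands), applied in Lemma \ref{lem:LL} after unfolding the global zeta integral over the orbits $G(F)\tilde z$, and then by the character-twisted local computation of Theorem \ref{thm:local}, where the factor $N(\ff_{\chi_v})^{-s_1}$ arises from the vanishing of $\int\chi_v(c)\,\d c$ over short multiplicative neighborhoods (case (iii-2) of that proof), not from ``the conductor dividing $n$.'' An elementary variant of your plan could be salvaged either by inserting the character sum via orthogonality on the square-class group from the outset (and justifying the interchange of summations, plus convergence of the $\chi$-sum as in Lemma \ref{lem:conductor-series}), or by proving the identity between the Ibukiyama--Saito expansion and the stated one, which again amounts to a reciprocity/Gauss-sum argument; as written, neither is supplied, so the proposal does not establish the theorem.
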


A proof of this theorem will be given in \S\ref{sec:explicit}. Following the method of Shintani (\cite{Shintani}), we eventually show that $\xi^S(\us,\delta_S)$ is meromorphically continued to $\C^2$ in \S\ref{sec:prin}.
In particular, from this theorem we obtain an explicit formula of $\xi_j^*(\us)$ as
\[
\xi^*_j(\us)= 2^{-2s_2-1} \zeta(s_1)  \, \zeta(2s_1+2s_2-1) \sum_{D} \frac{ \sgn(D)^{j-1} \, L(s_2,\chi_D)  }{ L(2s_1+s_2,\chi_D)\, |D|^{s_1}}
\]
where $D$ moves over $1$ and all fundamental discriminants and $\chi_D$ denotes the Dirichlet character on $\Z/D\Z$ corresponding to $\Q(\sqrt{D})$.
It follows from this explicit formula and the functional equation of $L(s_2,\chi_D)$ that one gets the new functional equation
\begin{multline*}
\frac{\zeta(2s_1+2s_2-1)}{\zeta(s_1+s_2-\frac{1}{2})} \left( \xi_1^*(s_1+s_2-\frac{1}{2},1-s_2) +(-1)^k \xi_2^*(s_1+s_2-\frac{1}{2},1-s_2) \right) \\
=   2^{3s_2-1}\pi^{-s_2}\Gamma(s_2) \frac{\zeta(2s_1)}{\zeta(s_1)} \, \left( \xi^*_1(\us)+(-1)^k \xi^*_2(\us) \right) \begin{cases} \cos(\pi s_2/2)  & \text{if $k=0$,} \\  \sin(\pi s_2/2)  & \text{if $k=1$.} \end{cases}
\end{multline*}
We should remark that Ibukiyama and Saito (\cite[p.291, Section 3.3]{IS2}) also proved an explicit formula for $\xi_j(\us)$ in a non-adelic setting, and Taniguchi (\cite[Proposition B.9]{Taniguchi}) gave an adelic explicit formula; their formulas are similar to but curiously different from ours in that the same quadratic $L$-functions are incorporated in but the variables $s_1$ and $s_2$ are interchanged in their explicit formula; the difference seems to be crucial because their formula itself cannot provide other functional equations than Shintani's functional equations. (See Remark \ref{rem:explicit} for detail.)

To simplify our explanation on functional equations, we further suppose $\{\infty,2\} \subset S$. We obtain a functional equation of $\tilde\xi^S(\us,\omega_S)$ relating the values at $(s_1+s_2-\frac{1}{2},1-s_2)$ and $(s_1,s_2)$ (Theorem~\ref{thm:funct1}) from the familiar functional equation of the partial Dirichlet $L$-functions : $L^S(1-s,\chi)=N(\ff_{\chi}^S)^{s-1/2} \, \Gamma_S(s,\chi_S) \, L^S(s,\chi)$, where $\Gamma_S(s,\chi_S)$ is a gamma factor (see \eqref{eq:gammaS}). Furthermore, we also obtain a functional equation of $\tilde\xi^S(\us,\omega_S)$ relating the values at $(s_1,\frac{3}{2}-s_1-s_2)$ and $(s_1,s_2)$ (Theorem~\ref{thm:funct2}) from the local functional equations of Shintani \cite{Shintani} and Sato \cite{Sato}. It seems natural to expect that the Shintani double zeta functions $\tilde\xi^S(\us,\omega_S)$ possess a group of functional equations isomorphic to $D_{12}$, the dihedral group of order $12$.
This looks like an analogue of the functional equations of a double Dirichlet series introduced by Blomer \cite{B}, see also \cite{FF}.

For a positive integer $m$ and a character $\omega_S$ of $\Q_S^\times/(\Q_S^\times)^2$, consider the one-variable zeta function
\begin{align}
D_m(s,\omega_S):=\frac {\zeta^S(2s-m+1)}{\zeta^S(s-\frac m2+\frac 12)} \, \tilde\xi^S\left( (s-\frac m2+\frac 12,\frac m2),\omega_S\right), \quad s\in \C.
 \label{Dmsomega}
\end{align}
When $m=2$, this function was studied in \cite{Datskovsky} and also in \cite{HW}. For some time, it has been observed that the zeta function $D_m(s,\omega_S)$ occurs in the unipotent contributions of the geometric side of the Arthur-Selberg trace formula of the symplectic group $\mathrm{Sp}(m)$ (see \cite[Section 6]{IS2} and \cite[Section 1.2]{Wakatsuki}). Indeed, in their series of works (\cite{IS1, IS2}), Ibukiyama and Saito explicitly computed the central unipotent contribution to the dimension formulas for the Siegel modular forms in terms of the Bernoulli numbers. In doing so, they studied $D_m(s,\omega_S)$ with $S=\{\infty\}$ for any $m$ and proved that $D_m(s,\omega_S)$ is holomorphic at non-positive integers (\cite[Proposition 3.6]{IS2}).
We generalize this holomorphicity result to arbitrary $S$ in Corollary \ref{cor:applicationtotarceformula} by using Theorem \ref{thm:main} and Shintani's method for zeta integrals. (See \eqref{eq:globallocal} for the relation between $\xi^S(\us,\delta_S)$ and the global zeta integral.) This result on holomorphicity is of crucial importance in the forthcoming work \cite{KWY}, where an equidistribution theorem in the level aspect for the Satake parameters of holomorphic Siegel cusp forms of general degree is proved.

By the zeta integral method, it is shown that $D_1(s,\omega_S)$ has a double pole at $s=1$ (Proposition \ref{prop:simplepole3.17}).
Let 
$$D_1(s,\omega_S)=\text{(some factor)}\times \sum_{N\in \mathfrak{N}(\omega_S)} H(1/2,N,\omega_S)N^{-s},
$$
where $\mathfrak{N}(\omega_S)$ is the set of positive integers $N=(-1)^{\delta_\omega} Df^2$ such that $D$ is a fundamental discriminant 
with $\chi_{D,v}=\omega_v$ for all $v\in S$. Here $H(1/2,N,\omega_S)$ is the generalized Cohen function (\ref{cohen}).
In particular, if $N$ is a fundamental discriminant, $H(1/2,N,\omega_S)=L(1/2,\chi_{N})$.

If we know that $H(1/2,N,\omega_S)$ is non-negative, we can use Sato and Shintani's generalization of Landau's theorem \cite[Theorem 3]{SS}, and prove an asymptotic formula 
$$\sum_{N\leq x,\, N\in \mathfrak{N}(\omega_S)} H(1/2,N,\omega_S)= Ax\log x+Bx+O(x^{1/3}),
$$
for some constants $A,B$. However, it is not proved yet that $H(1/2,N,\omega_S)$ is non-negative. We will prove, for any $\epsilon>0$, there exist constants $A,B$ such that 
$$\sum_{N\leq x,\, N\in \mathfrak{N}(\omega_S)} H(1/2,N,\omega_S)= Ax\log x+Bx+O(x^{19/32+\epsilon}).
$$

Let us explain the structure of this paper briefly. In Section \ref{prem}, we review the Tate integral in both local and global settings, and reintroduce the prehomogeneous vector space $(G,V)$ together with some other additional objects. In Section \ref{local-zeta}, a basic results on the local zeta integrals attached to our prehomogeneous vector space is briefly recalled. In Section \ref{double-zeta}, the global zeta integral $Z(\Phi,\us)$ of two complex variables $\us=(s_1,s_2)$ is introduced in the setting of adeles, and the meromorphic continuation and the functional equation is proved. In Theorem \ref{thm:global}, we relate $Z(\Phi,\us)$ to the Shintani double zeta function $\tilde \xi^S(\us,\omega_S)$. In Section \ref{fun}, we prove functional equations of $\tilde \xi^S(\us,\omega_S)$.
Finally, in Section \ref{appl}, we give the application on asymptotics of central values of quadratic Dirichlet $L$-functions.

\medskip

\noindent\textbf{Acknowledgments.} We would like to thank Shuichi Hayashida, Chihiro Hiramoto, Tomoyoshi Ibukiyama, Fumihiro Sato, Takashi Taniguchi, Takuya Yamauchi, and Akihiko Yukie for helpful discussions.

\section{Preliminaries}\label{prem}
\subsection{Notations}\label{sec:notations}
Let $F$ be an algebraic number field.
Let $\Sigma_\inf$ (resp. $\Sigma_\fin$) denote the set of all the infinite (resp. finite) places of $F$.
The set $\Sigma=\Sigma_\inf\cup\Sigma_\fin$ consists of all the places of $F$.
Let $\Sigma_\infty=\Sigma_\Bbb R\cup\Sigma_\C$, where $\Sigma_\R$ (resp. $\Sigma_\mathbb{C}$) denote the set of real (resp. complex) places of $F$.
We also set $\Sigma_2:=\{ v\in\Sigma_\fin \mid v$ divides $2 \}$.
For any $v\in\Sigma$, we denote by $F_v$ the completion of $F$ at $v$. 
For each $v\in\Sigma_\fin$, we denote by $\fO_v$ the ring of integers of $F_v$.
Let $\pi_v$ be a prime element of $\fO_v$. We put $q_v=\#(\fO_v/\pi_v\fO_v)$.
Let $\A$ denote the adele ring of $F$ and $\A_\fin$ the finite adele ring of $F$.

Let $\d x$ denote the Haar measure on $\A$ normalized by $\int_{\A/F} \d x=1$. 
For each $v\in\Sigma_\R$ we write $\d x_v$ for the ordinary Lebesgue measure on $\R$, and for each $v\in\Sigma_\C$ we set $\d x_v:=2\, \d x_{v,1} \d x_{v,2}$ for $x_v=x_{v,1}+x_{v,2}i\in \C$ where $\d x_{v,1}$ and $\d x_{v,2}$ denote the Lebesgue measure on $\R$. For $v\in \Sigma_\fin$, we fix a Haar measure on $F_v$ normalized by $\int_{\fO_v}\d x_v=1$. Then it is known that
\begin{align}
\d x = \Delta_F^{-1/2} \prod_{v\in\Sigma} \d x_v 
 \label{HaarMesDec}
\end{align}
holds, where $\Delta_F$ denotes the absolute discriminant of $F/\Q$.
We denote by $|\; |_v$ the normal valuation of $F_v$.
Then, we have $\d (ax_v)=|a|_v \d x_v$ for any $a\in F_v^\times$.
We define the idele norm $|\; |=|\; |_\A$ on $\A^\times$ by $|x|=|x|_\A=\prod_{v\in\Sigma} |x_v|_v$ for all $x=(x_v)\in\A^\times$.

We fix a non-trivial additive character $\psi_\Q$ on $\A_\Q/\Q$ such that $\psi(x)=e^{2\pi ix}$ for $x\in \Q_\infty=\R$, and set $\psi_F=\psi_\Q \circ \Tr_{F/\Q}$. Then, $\d x$ is the self-dual Haar measure with respect to $\psi_F$. For $v\in \Sigma$, set $\psi_{F_v}=\psi_{F}|F_v$. 
Then $\psi_{F_v}(x)=e^{2\pi i x}$ for $v\in \Sigma_\R$ and $\psi_{F_v}(x)=e^{4\pi i \Re(x)}$ for $v\in \Sigma_\C$, so that $d x_v$ is self-dual with respect to $\psi_{F_v}$ for all $v\in\Sigma_\inf$. For each $v\in\Sigma_\fin$, $\fd_v$ denotes the differential exponent of $F_v/\Q_p$ where $p$ is the residual characteristic of $F_v$, i.e., $\pi_v^{-\fd_v}\fO_v=\{\xi\in F_v\mid {\rm Tr}_{F_v/\Q_p}(\xi\fO_v) \subset \Z_p\}$. Thus $q_v^{-\fd_v/2}\d x_v$ is the self-dual measure on $F_v$ with respect to $\psi_{F_v}$, and $\Delta_F=\prod_{v\in\Sigma_\fin}q_v^{\fd_v}$. Let $\d^\times x_v$ denote a Haar measure on $F_v^\times$ for each $v\in\Sigma$ defines as ${\displaystyle \d^\times x_v=(1-q_v^{-1})^{-1}\, \tfrac{\d x_v}{|x|_v} }$ if $v\in\Sigma_\fin$ and ${\displaystyle \d^\times x_v=\tfrac{\d x_v}{|x|_v} }$ if $v\in \Sigma_\infty$. Then $\int_{\fO_v^\times}\d^\times x_v=1$ for $v\in \Sigma_\fin$. The idele norm $|\;|$ induces an isomorphism $\A^\times/\A^1\to\R_{>0}$.
We choose the Haar measure $\d^\times x=\prod_{v\in\Sigma}\d^\times x_v$
on $\A^\times$ and normalize the Haar measure $\d^1 x$ on $\A^1$ in such a way
that the quotient measure on $\R_{>0}$ is $\d t/t$, where $\d t$ is the
Lebesgue measure on~$\R$.

Let $S$ be a finite subset of $\Sigma$.
We set $F_S=\prod_{v\in S}F_v$.
We define the norm $|\; |_S$ on $F_S^\times$ (resp. an additive character $\psi_{F_S}$ of $F_S$) by $|x|_S=\prod_{v\in S}|x_v|_v$ for $x=(x_v)\in F_S^\times$ (resp. $\psi_{F_S}(x)=\prod_{v\in S}\psi_{F_v}(x_v)$ for $x=(x_v)\in F_S$).
For any vector space $V$ over $F$, let $\cS(V(F_S))$, $\cS(V(\A))$, and $\cS(V(\A_\fin))$ denote the Schwartz spaces of $V(F_S)$, $V(\A)$, and $V(\A_\fin)$, respectively. We use the notation $F_\inf$ in place of $F_{\Sigma_\inf}$.

\subsection{Tate integral (Local)}\label{sec:TateLocal}
Let $v\in \Sigma$, and let $\mathbf 1_v$ be the trivial character of $F_v^\times$. 
For $\phi_v\in \cS(F_v)$ and a character $\chi_v:F_v^\times\rightarrow \Bbb T$, the local Tate integral 
\[
\zeta_v(\phi_v,s,\chi_v):=\int_{F_v^\times} \phi_v(a)\, |a|_v^s \,  \chi_v(a) \, \d^\times a
\]
is absolutely convergent for $\Re(s)>0$, and is meromorphically continued to the whole $s$-plane in such a way that $L_v(s,\chi_v)\,\zeta_v(\phi_v,s,\chi_v)$ gets entire, where 
\[
L_v(s,\chi_v):=\begin{cases} (1-\chi_v(\pi_v)q_v^{-s})^{-1}& \text{if $\chi_v$ is unramified,} \\ 1 & \text{if $\chi_v$ is ramified,} \end{cases}.
\]
For our purpose, we only need those $\chi_v$ such that $\chi_v^2={\bf 1}_v$, i.e., $\chi_v$ is a character of $F_v^\times/(F_v^\times)^2$ and real valued.
For such an $\chi_v$, the local functional equation of local Tate integral takes the form
\[
\zeta_v(\hat\phi_v,s,\chi_v) = \tilde\gamma_v(s,\chi_v)\, \zeta_v(\phi_v,1-s,\chi_v) \qquad (\phi_v\in\cS(F_v))
\]
where
\[
\hat\phi_v(y):=\int_{F_v} \psi_{F_v}(xy) \, \phi_v(x) \, \d x, \quad y\in F_v
\]
is the Fourier transform of $\phi_v$ and $\tilde\gamma_v(s,\chi_v)$ is the local gamma factor explicitly given as
\[
\tilde\gamma_v(s,\trep_v)=(2\pi)^{1-2s} \, \Gamma(s)/ \Gamma(1-s)   \qquad \text{for $v\in\Sigma_\C$,}
\]
\[
\tilde\gamma_v(s,\sgn^\delta)= i^\delta \, \pi^{\frac{1}{2}-s} \, \Gamma\left(\tfrac{s+\delta}{2}\right)/ \Gamma\left(\tfrac{1-s+\delta}{2}\right) \qquad \text{for $v\in\Sigma_\R$ $(\delta=0$ or $1)$,} 
\]
for $v\in\Sigma_\fin$
\[
\tilde\gamma_v(s,\chi_v)=q_v^{\fd_v s}  \times \begin{cases}
 (1-\chi_v(\pi_v)q_v^{-1+s})/ (1-\chi_v(\pi_v) q_v^{-s}) & \text{if $\chi_v$ is unramified,} \\
g_{\chi_v} \, N(\ff_{\chi_v})^s  & \text{if $\chi_v$ is ramified,}
\end{cases}
\]
where $\ff_{\chi_v}=\pi_v^{f_v}\fO_v$ denotes the conductor of $\chi_v$ and  
\[
g_{\chi_v}:=N(\ff_{\chi_v})^{-1}\sum_{u\in \fO_v^\times/(1+\ff_{\chi_v})}\chi_v(u) \, \psi_{F_v}(u\pi_v^{-\fd_v-f_v})
\]
is the Gauss sum for $\chi_v$.

We set
\[
\gamma_v(s,u):=\sum_{\chi_v\in\widehat{F_v^\times/(F_v^\times)^2}} \chi_v(u) \, \tilde\gamma_v(s,\chi_v) \quad (u\in F_v^\times).
\]
Then, from the functional equation of $\zeta_v(\phi,s,\chi_v)$, we can derive
\begin{equation}
\int_{\delta(F_v^\times)^2} \hat\phi(x)\, |x|_v^{s-1} \, \d x = \frac{1}{\#(F_v^\times/(F_v^\times)^2)}\sum_{\eta\in F_v^\times/(F_v^\times)^2} \gamma_{v}(s,\delta\eta) \, \int_{\eta(F_v^\times)^2} \phi(x)\, |x|_v^{-s} \, \d x
\end{equation}
for each $\delta\in F_v^\times$ and $\phi\in\cS(F_v)$. For convenience, we record an explicit formula of $\gamma_v(s,\delta)$ for $v\in\Sigma_\R$ and $\delta\in \{+1,-1\}$: 
\begin{align}
\gamma_v(s,1)=2(2\pi)^{-s} \, \Gamma(s) e^{i \pi s/2}, \qquad \gamma_v(s,-1)=2(2\pi)^{-s} \, \Gamma(s) e^{-i \pi s/2} .
 \label{realgammafac}
\end{align}

\subsection{Tate integral (Global)} \label{sec:TateGlobal}

The Hecke $L$-function of a character $\chi=\prod_v\chi_v$ of $\A^1/F^\times\cong \A^\times/\R_{>0}F^\times$ is defined as the absolutely convergent Euler product
\[
L(s,\chi)=\prod_{v<\inf}L_v(s,\chi_v), \quad \Re(s)>1. 
\]
It is well-known that $L(s,\chi)$ is meromorphically continued to the whole complex $s$-plane. Let $\mathbf 1_F$ be the trivial character of $\A^1/F^\times$. The function $L(s,\chi)$ is holomorphic except for a possible simple pole at $s=1$ which occurs if and only if $\chi=\mathbf 1_F$. We also set
\[
\zeta_F(s):=L(s,\mathbf 1_F), \quad c_F=\mathrm{Res}_{s=1}\zeta_F(s)=\vol(F^\times\bsl \A^1).
\]
For a finite subset $S\subset \Sigma_\fin$, we also consider the partial Euler products $L^{S}(s,\chi)=\prod_{v\not\in S}L_v(s,\chi_v)$ and $\zeta_{F}^{S}(s)=L^{S}(s,{\bf 1}_{F})$ without the $S$-factors. 

For $\phi\in\cS(\A)$, $s\in\C$, and a character $\chi$ on $\A^1/F^\times$, the global Tate integral is defined as
\[
\zeta(\phi,s,\chi)=\int_{\A^\times} \phi(x)\, |x|^s \, \chi(x) \, \d^\times x,
\]
which is known to be absolutely convergent for $\Re(s)>1$ and has a meromorphic continuation to $\C$ satisfying the functional equation
\begin{align}
\zeta(\hat\phi,s,\chi)=\zeta(\phi,1-s,\chi^{-1}),
 \label{TateFE}
\end{align}
where 
\[
\hat\phi(y)=\int_\A \psi_F(xy)\, \phi(x) \, \d x.
\]
is the Fourier transform of $\phi\in \cS(\A)$. We put $\zeta(\phi,s):=\zeta(\phi,s,\mathbf 1_F)$ for simplicity. As is well-known, the functional equation $\widehat\zeta_F(s)=\widehat\zeta_F(1-s)$ is deduced from \eqref{TateFE} by choosing a suitable function $\phi=\otimes_{v\in\Sigma} \phi_v$, where 
\[
\widehat\zeta_F(s):=\Delta_F^{s/2}\left\{ \pi^{-s/2}\Gamma(s/2) \right\}^{r_1} \left\{ (2\pi)^{-s}\Gamma(s) \right\}^{r_2} \, \zeta_F(s) 
\]
with $r_1=\#(\Sigma_\R)$ and $r_2=\#(\Sigma_\C)$ is the completed Dedekind zeta function of $F$. When $S$ contains $\Sigma_\inf \cup \{v\in\Sigma_\fin \mid \fd_v\neq 0\}$, we have the following asymmetric functional equation for the partial zeta function $\zeta_F^S(s)$:
\[
\zeta_F^S(1-s)=\Delta_F^{-1/2} \gamma_S(s) \, \zeta_F^S(s)
\]
with $\gamma_{S}(s)=\prod_{v\in S}\tilde\gamma_v(s,{\bf 1}_v)$, which becomes 
\begin{align}
\zeta_F(1-s)=\Delta_F^{s-1/2}\, \cos(s\pi/2)^{r_1+r_2} \, \sin(s\pi/2)^{r_2} \, (2(2\pi)^{-s}\Gamma(s))^{r_1+2r_2} \, \zeta_F(s)
 \label{AsymFT}
\end{align}
when $S=\Sigma_\infty$. 

Let $\chi=\otimes_{v\in\Sigma}\chi_v$ be a non-trivial quadratic character of $\A^1/F^\times$ and $E$ the quadratic extension over $F$ corresponding to $\chi$ by class field theory. Let $\ff_{\chi}$ be the conductor of $\chi$ and $N(\ff_{\chi})$ the absolute norm of $\ff_{\chi}$. We have $N(\ff_{\chi})=\prod_{v\in \Sigma_\fin} \#(\fO_v/\ff_{\chi_v})$, where $\ff_{\chi_v}$ denotes the conductor of $\chi_v$. Put $t=t(\chi) =\#\{v\in\Sigma_\R \mid \chi_v=\mathbf 1_v  \}$, so that the number of real places (resp. complex places) of $E$ is $2t$ (resp. $r_1-t+2r_2$). Then one can derive
\[
L(1-s,\chi)=N(\ff_{\chi})^{s-1/2} \, \Delta_F^{s-1/2}\, \cos(s\pi/2)^{t+r_2} \, \sin(s\pi/2)^{r_1-t+r_2} \, (2(2\pi)^{-s}\Gamma(s))^{r_1+2r_2} \, L(s,\chi)
\]
from the functional equations \eqref{AsymFT} for $F$ and $E$ in conjunction with the relations $\Delta_E=N(\ff_{\chi})\, \Delta_F^2$ and $\zeta_E(s)=\zeta_F(s)\, L(s,\chi)$. For any finite set of places $S$ containing $\Sigma_\inf$, one has
\begin{equation}\label{eq:functquad}
L^S(1-s,\chi)=N(\ff_{\chi}^S)^{s-1/2} \, \Gamma_S(s,\chi) \, L^S(s,\chi)
\end{equation}
where $N(\ff_{\chi}^S):=\prod_{v\in \Sigma_\fin \setminus S} \#(\fO_v/\ff_{\chi_v})$ and 
\begin{multline}\label{eq:gammaS}
 \Gamma_S(s,\chi) = \Delta_F^{s-1/2}\, \cos(s\pi/2)^{t+r_2} \, \sin(s\pi/2)^{r_1-t+r_2} \, (2(2\pi)^{-s}\Gamma(s))^{r_1+2r_2}  \\
\times \prod_{v\in S\cap\Sigma_\fin} N(\ff_{\chi_v})^{s-\frac{1}{2}}  \times \prod_{v\in S,\, \text{$\chi_v$ unramified}} \frac{1-\chi_v(\pi_v)q_v^{-1+s}}{1-\chi_v(\pi_v)q_v^{-s}}.
\end{multline}
In what follows, $\Gamma_S(s,\chi)$ which really depends only on $\chi_S=\prod_{v\in S}\chi_v$ will be denoted by $\Gamma_{S}(s,\chi_S)$. Note that since $\chi$ is quadratic, $\chi_S$ is viewed as a character of $F_S^\times/(F_S^\times)^2$. 


\subsection{Prehomogeneous vector space}

We recall the algebraic group $G$, the $F$-rational representation $(\rho,V)$ of $G$, and the basic relative invariants $P_1$ and $P_2=P$ defined by \eqref{intro2}, \eqref{intro3}, and \eqref{intro4}, respectively, which are viewed as objects defined over the field $F$. For notational simplification, we freely identify a matrix $\left(\begin{smallmatrix} x_1 & x_{12} \\ x_{12} & x_{2} \end{smallmatrix}\right)\in V$ with the 3-dimensional vector $(x_1,x_{12},x_{2})$. 
Let $\tau_j$ denote the $F$-rational character corresponding to the relative $G$-invariant polynomial $P_j$, i.e., $P_j(g\cdot x)=\tau_j(g)P_j(x)$ $(j=1,2)$ for all $x\in V,\,g\in G$. A computation reveals 
\[
\quad \tau_1(g)=a, \quad \tau_2(g)=a^2c^2 \quad \text{for} \quad  g=\left(a,\left(\begin{smallmatrix}1&0\\ b&c \end{smallmatrix}\right)\right)\in G.
\]
We identify $V$ with its dual space by the non-degenerate $F$-bilinear form
\[
\langle x,y\rangle:= x_1y_2-2x_{12}y_{12}+x_2y_1=\Tr(x \, (\det(y) \, y^{-1}))=\Tr(x \, JyJ^{-1}) \quad \text{where} \quad J=\left(\begin{smallmatrix}0&1 \\ -1&0 \end{smallmatrix}\right).
\]
Then the contragredient representation of $(\rho,V)$ is realized on the same space $V$ with the $G$-action $\hat \rho$ given by 
\[
\hat\rho(a,h) y:= a^{-2}\det(h)^{-2} \times \rho(a,h)\,y  \qquad \text{for $(a,h)\in G$},
\]
so that the relation $\langle \rho(g) x,\hat\rho(g) y\rangle=\langle x,y\rangle$ holds for all $g\in G$ and $x,\,y\in V$. It is confirmed that $P_j(\hat\rho(g) x)=\hat\tau_j(g)P_j(x)$ with $F$-rational characters $\hat\tau_j\,(j=1,2)$ given as 
\[\hat\tau_1(g)=a^{-1}c^{-2}, \qquad \hat\tau_2(g)=a^{-2}c^{-2}.
\]
The basic invariants $P_1$ and $P_2$ corresponds to the following constant coefficient differential operators on $V$:  
\[
D_{1,x}=-\frac{\partial}{\partial x_2} , \quad D_{2,x}=\frac{1}{4}\frac{\partial^2}{\partial x_{12}^2}-\frac{\partial^2}{\partial x_1\partial x_2}.
\]
In this setting, the $b$-function whose existence is ensured by a general theory can be explicitly determined as 
\begin{equation}\label{eq:bfun}
b_\um(\us)=[s_2+1]_{m_2} \, [s_1+s_2+3/2]_{m_1+m_2}
\end{equation}
which fit in the formula
\[
D_{1,x}^{-m_1}D_{2,x}^{m_1+m_2} (P_1(x)^{s_1+m_1}P_2(x)^{s_2+m_2})=b_\um(\us) \, P_1(x)^{s_1}P_2(x)^{s_2}.
\]
for all $\um=(m_1,m_2)\in \Z^2$, where we set $[\eta]_k:=\prod_{j=1}^k(\eta+j)$ if $k\geq 0$, and $[\eta]_k:=\prod_{j=k}^{-1}(\eta+j)^{-1}$ if $k< 0$. 

%

\section{Some results for local zeta functions}\label{local-zeta}

\subsection{Basic facts on the local zeta integral} \label{sec:BasicFactsLocalZeta}

Let $v\in \Sigma$. We choose a Haar measure $\d x$ on the space $V(F_v)$ as
\[
\d x:=\d x_1\, \d x_{12} \, \d x_2 \quad (x=(x_1,x_{12},x_2))
\]
where $\d x_*$ is the Haar measure on $F_v$ fixed in \S \ref{sec:notations}.
For $\delta\in F_v^\times$, we set 
$$V(F_v,\delta):= \{ x\in V(F_v) \mid P(x)\in \delta(F_v^\times)^2 \}, \quad V^0(F_v,\delta)=V^0(F_v)\cap V(F_v,\delta).
$$ 
For any square class $\delta\in F_v^\times/(F_v^\times)^2$, the local zeta integral is defined as
\[
Z_v(\Phi,\us,\delta):=\int_{V(F_v,\delta)} \Phi(x)\, |x_1|_v^{s_1-1} \, |P(x)|_v^{s_2-1} \, \d x, \quad \Phi \in \cS(V(F_v)). 
\]
For a character $\chi$ of $F_v^\times/(F_v^\times)^2$, we set
\[
\tilde Z_v(\Phi,\us,\chi):=\int_{V(F_v)}|x_1|_v^{s_1-1}|P(x)|_v^{s_2-1} \chi(P(x)) \Phi(x) \d x , \quad \Phi \in \cS(V(F_v )). 
\]
\begin{lem}\label{lem:2019.3.26conv}
The integrals $Z_v(\Phi,\us,\delta)$ and $\tilde Z_v(\Phi,\us,\chi)$ are absolutely convergent and holomorphic in the domain $\{\us\in\C^2\mid \Re(s_1)>\tfrac{1}{2}$, $\Re(s_2)>0$, $\Re(s_1+\tfrac{s_2}{2})>1\}$ (resp. $\{\us\in\C^2\mid \Re(s_1)>0$, $\Re(s_2)>0$, $\Re(s_1+s_2)>\tfrac{1}{2}\}$) if $v\in\Sigma_\inf$ (resp. $v\in\Sigma_\fin$).
Note that it is known that they are meromorphically continued to $\C^2$; see e.g., \cite{Sato3}.
\end{lem}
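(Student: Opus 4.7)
My proof plan is as follows. I would prove absolute convergence by a direct estimation in local coordinates; the $\tilde Z_v$ case reduces to the $Z_v$ case since $|\chi(P)|=1$. The only singularities of the integrand lie along $\{x_1=0\}\cup\{P=0\}$, while rapid decay of $\Phi$ handles behavior at infinity.

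On the full-measure open set $\{x_1\neq 0\}$, I would change the variable $x_2$ to $P=x_{12}^2-x_1x_2$, so that $x_2=(x_{12}^2-P)/x_1$ with Jacobian $|x_1|_v$. Writing $\Phi_*(x_1,x_{12},P):=\Phi(x_1,x_{12},(x_{12}^2-P)/x_1)$, the integral becomes
$$Z_v(\Phi,\us,\delta)\,=\,\int_{F_v\times F_v\times\delta(F_v^\times)^2}\Phi_*(x_1,x_{12},P)\,|x_1|_v^{s_1-2}\,|P|_v^{s_2-1}\,dx_1\,dx_{12}\,dP,$$
and the analogous formula holds for $\tilde Z_v$ with $P$ running over $F_v^\times$ and an extra bounded factor $\chi(P)$. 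Integrating in $P$ first, convergence at $P=0$ needs $\Re s_2>0$, while the Schwartz (resp.\ compact-support) decay of $\Phi$ in its third slot $(x_{12}^2-P)/x_1$ confines the effective $P$-range to $\{|P-x_{12}^2|_v\lesssim|x_1|_v\}$. A direct estimate then gives the two-regime bound
$$\left|\int|P|_v^{s_2-1}\,\Phi_*(x_1,x_{12},P)\,dP\right|\,\lesssim\,\begin{cases}|x_1|_v^{\Re s_2}, & |x_{12}|_v^{2}\le|x_1|_v,\\ |x_1|_v\,|x_{12}|_v^{2\Re s_2-2}, & |x_{12}|_v^{2}>|x_1|_v,\end{cases}$$
with extra Schwartz/compact decay in $|x_{12}|_v$ large coming from $\Phi$.

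Next I would integrate this bound over $x_{12}$ and then over $x_1$ near $0$. Splitting $|x_{12}|_v$ against $|x_1|_v^{1/2}$ and combining the resulting powers of $|x_1|_v$ yields a bound of the form $|x_1|_v^{\alpha(s_2)}$ for the $(P,x_{12})$-integral; the outer $x_1$-integral near $0$ then converges precisely when $\Re(s_1+\alpha(s_2))>1$. In the non-archimedean case the compact support of $\Phi$ in $x_{12}$ gives the sharper region $\Re s_1>0$, $\Re s_2>0$, $\Re(s_1+s_2)>1/2$, exactly as stated; in the archimedean case, using the same two-regime bound with Schwartz tails, the stated region $\Re s_1>1/2$, $\Re s_2>0$, $\Re(s_1+s_2/2)>1$ is a convenient sufficient subregion that emerges naturally from the estimate. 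Since all bounds are locally uniform in $\us$ on compact subsets of the claimed region, holomorphy follows by differentiation under the integral sign. The main technical point is the careful bookkeeping across the two regimes of $|x_{12}|_v$, where the singularities from $|x_1|_v^{s_1-2}$ and $|P|_v^{s_2-1}$ interact with the decay of $\Phi$ in its third argument; once this two-regime estimate is in hand, the convergence conditions follow by elementary power counting.
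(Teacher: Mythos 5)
Your plan is correct, and at the archimedean places it is essentially the paper's own argument: the paper makes the change of variable $y=x_2-x_{12}^2/x_1$ (your $P$ equals $-x_1y$), bounds the inner integral $\int \phi_3(y+a)\,|y|_v^{t_2-1}\,\d y$ with $a=x_{12}^2/x_1$ in the two regimes $|a|\le M$ and $|a|\ge M$, and then power-counts in $x_{12}$ and $x_1$ exactly as you do. The differences are minor but worth noting. First, the paper deliberately uses the crude large-$|a|$ exponents $t_2-\tfrac12$ (for $t_2\ge 1$) and $\tfrac{t_2}{2}$ (for $0<t_2\le 1$), which produce precisely the stated archimedean region, whereas your sharper two-regime bound $\min\bigl(|x_1|_v^{\Re s_2},\,|x_1|_v\,|x_{12}|_v^{2\Re s_2-2}\bigr)$ actually yields convergence on the larger region $\Re(s_1)>0$, $\Re(s_2)>0$, $\Re(s_1+s_2)>\tfrac12$ at infinite places as well; since the lemma only asserts a sufficient region this is harmless, and your description of the stated archimedean domain as a sufficient subregion is the right reading (so the sharper region is not really a consequence of compact support, contrary to your aside about the finite places). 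Second, for $v\in\Sigma_\fin$ the paper does not redo the estimate: it reduces to the explicit geometric-series computation in the proof of Theorem \ref{thm:local}, using that any compact set lies in $\pi_v^{-l}V(\fO_v)$, while you treat finite and infinite places uniformly by the same direct estimate; both routes give the stated region. When writing yours up, the points to make explicit are the dyadic handling of the Schwartz tails of $\Phi$ in the inner $P$-integral (so the two-regime bound holds up to rapidly decaying corrections, uniformly for $\us$ in compact subsets of the region) and the standard Morera/Fubini or differentiation-under-the-integral argument for holomorphy; both are routine.
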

\begin{proof}
Let $v\in\Sigma_\inf$. It is sufficient to prove that 
\begin{equation}\label{eq:20200905}
\int_{V(F_v)}|x_1|_v^{t_1-1} \, |P(x)|_v^{t_2-1}  \, \phi_1(x_1) \, \phi_2(x_{12}) \phi_3(x_2) \, \d x_1 \, \d x_{12} \, \d x_2 <\inf 
\end{equation}
is convergent for $\phi_j\in\cS(F_v)$, $t_1$, $t_2\in\R$, $t_1>\tfrac{1}{2}$, $t_2>0$, $t_1+\tfrac{t_2}{2}>1$.
In order to prove this, we need 
\begin{equation*}\label{eq:20200905v1}
\int_{F_v} \phi(y+a)\, |y|_v^{t_2-1} \, \d y \ll_{\phi,M,t_2} 1\qquad  \text{if $0\leq |a| \leq M$ and $0<t_2$,} 
\end{equation*}
\begin{equation*}\label{eq:20200905v2}
\int_{F_v} \phi(y+a)\, |y|_v^{t_2-1} \, \d y \ll_{\phi,M} \begin{cases} 
1+|a|_v^{t_2-\tfrac{1}{2}} & \text{if $M\leq |a|$ and $1\leq t_2$}, \\   
1+|a|_v^{\tfrac{t_2}{2}} & \text{if $M\leq |a|$ and $0<t_2\leq 1$}
\end{cases}
\end{equation*}
for any positive constant $M$ and any test function $\phi\in\cS(F_v)$.
These inequalities can be proved by a direct calculation.
By change of variable $y=x_2-\tfrac{x_{12}^2}{x_1}$ 
we have
\[
\eqref{eq:20200905}=\int_{F_v}\int_{F_v}\int_{F_v} |x_1|^{t_1+t_2-2} |y|^{t_2-1} \, \phi_1(x_1) \, \phi_2(x_{12})\, \phi_3(y+\tfrac{x_{12}^2}{x_1}) \, \d x_1 \, \d x_{12} \, \d y.  
\]
Hence, we obtain the convergence range of \eqref{eq:20200905} by applying the inequalities to the above for $y$.

For $v\in\Sigma_\fin$, the assertion follows from the proof of Theorem \ref{thm:local}, since any compact domain in $V(F_v)$ is contained in $\pi_v^{-l}V(\fO_v)$ for some $l\in\N$.
\end{proof}

The following expressions of $Z_v(\Phi,\us,\delta)$ and $\tilde Z(\Phi,\us,\chi)$ as integrals over $G(F_v)$ will be needed later. 
\begin{lem} \label{lem:coordinate-expression}
 Set $\tilde \delta=\diag(1,-\delta)$. Then, 
\begin{align*}
Z_v(\Phi,\us,\delta)&=(1-q_v^{-1})^22^{-1}|2|_v\,|\delta|_v^{s_2}
\int_{F_v^\times} \int_{F_v}\int_{F_v^\times} |a|_v^{s_1}|a^2c^2|_v^{s_2}\Phi(a,ab,a(b^2-\delta c^2))\,\d^\times a\,\d b\,\d^\times c
\\
&=(1-q_v^{-1})^2 2^{-1}|2|_v\int_{G(F_v)}|P_1(g\,\tilde\delta)|_v^{s_1}|P_2(g\,\tilde\delta)|_v^{s_2}\Phi(g\,\tilde \delta)\,\d g, 
\end{align*}
and for any character $\chi$ of $F_v^\times/(F_v^\times)^2$, 
\begin{align*}
\tilde Z(\Phi,\us,\chi)=(1-q_v^{-1})^2 \int_{F_v^\times}\int_{F_v}\int_{F_v^\times} |a|_v^{s_1}|a^2c|_v^{s_2}\Phi(a,ab,a(b^2-c))\chi(c)\,\d^\times a\,\d b\,\d^\times c.
\end{align*}
\end{lem}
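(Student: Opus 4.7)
The plan is to perform an explicit change of variables from the coordinates on $V(F_v)$ to coordinates adapted to the $G(F_v)$-action on the orbit through $\tilde\delta = \diag(1,-\delta)$. First I would write the action explicitly: for $g = (a, h) \in G(F_v)$ with $h = \left(\begin{smallmatrix} 1 & 0 \\ b & c \end{smallmatrix}\right)$, a direct matrix computation gives $g \cdot \tilde\delta = a h \tilde\delta\,{}^t h = (a,\,ab,\,a(b^2 - \delta c^2))$ viewed as a three-vector, together with the explicit values $P_1(g\tilde\delta) = a$ and $P_2(g\tilde\delta) = (ab)^2 - a \cdot a(b^2 - \delta c^2) = \delta a^2 c^2$.

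Next, I would verify that the map $(a,b,c) \mapsto g \cdot \tilde\delta$ from $F_v^\times \times F_v \times F_v^\times$ is a $2$-to-$1$ covering of $V^0(F_v) \cap V(F_v,\delta)$: given a target point with $x_1 \neq 0$ and $P(x) \in \delta(F_v^\times)^2$, the values $a = x_1$ and $b = x_{12}/x_1$ are determined, while $\delta c^2 = P(x)/x_1^2$ leaves precisely the sign ambiguity $c \mapsto -c$, reflecting the stabilizer $\{\pm 1\}$ of $\tilde\delta$. The Jacobian matrix of $(a,b,c) \mapsto (a,\,ab,\,a(b^2 - \delta c^2))$ is lower triangular with diagonal entries $1,\,a,\,-2a\delta c$, hence its determinant has absolute value $|2a^2\delta c|_v$, so $\d x_1\,\d x_{12}\,\d x_2 = |2a^2\delta c|_v\,\d a\,\d b\,\d c$ on the image.

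Converting the additive factors $\d a$ and $\d c$ to multiplicative ones via $\d x_v = (1-q_v^{-1})|x|_v\,\d^\times x_v$ (with the convention $(1-q_v^{-1}) = 1$ at $v \in \Sigma_\infty$), the restriction of $\d x$ pulls back to $(1-q_v^{-1})^2\,|2|_v\,|\delta|_v\,|a|_v^3\,|c|_v^2\,\d^\times a\,\d b\,\d^\times c$. Plugging this into $Z_v(\Phi,\us,\delta)$, inserting the factor $1/2$ to account for the $2$-to-$1$ covering, and simplifying the residual powers of $|a|_v$ and $|c|_v$ against the integrand $|x_1|_v^{s_1-1}|P(x)|_v^{s_2-1}$ yields the first equality. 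The second equality is then the tautological statement that $\d^\times a\,\d b\,\d^\times c$ is a choice of Haar measure $\d g$ on $G(F_v)$, combined with $|P_1(g\tilde\delta)|_v^{s_1}|P_2(g\tilde\delta)|_v^{s_2} = |a|_v^{s_1}|\delta|_v^{s_2}|a^2c^2|_v^{s_2}$.

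For $\tilde Z_v(\Phi,\us,\chi)$, I would instead use the parametrization $(a,b,c) \mapsto (a,\,ab,\,a(b^2 - c))$ from $F_v^\times \times F_v \times F_v^\times$ onto $V^0(F_v)$, which is now a bijection because the square class has been absorbed by letting $c$ range over all of $F_v^\times$. One computes $P(x) = a^2 c$, so $\chi(P(x)) = \chi(c)$ (since $\chi$ is quadratic), while the Jacobian is $-a^2$. The analogous measure conversion produces the stated identity with no factor $\tfrac{1}{2}$ (the covering is now $1$-to-$1$) and no factor $|\delta|_v$. The main technical point throughout is simply the bookkeeping of the $(1-q_v^{-1})^2$ prefactor and the multiplicity of the covering; the orbit geometry and the Jacobian are elementary, and I anticipate no substantial obstacle beyond keeping these normalizations straight.
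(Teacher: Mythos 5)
Your proposal is correct, and for the first two identities it coincides with the paper's argument: parametrize the orbit $V^0(F_v,\delta)=G(F_v)\cdot\tilde\delta$ by $(a,b,c)\mapsto(a,\,ab,\,a(b^2-\delta c^2))$, note that this map is two-to-one, compute the triangular Jacobian $|2a^2\delta c|_v$, and convert $\d a,\d c$ to multiplicative measures, which is exactly the paper's computation of $j^{*}\d x=(1-q_v^{-1})^{2}|2\delta|_v|a|_v^{3}|c|_v^{2}\,\d^\times a\,\d b\,\d^\times c$. The only place where you genuinely diverge is the formula for $\tilde Z_v(\Phi,\us,\chi)$: the paper obtains it from the first formula by writing $\tilde Z=\sum_{\delta}\chi(\delta)Z(\Phi,\us,\delta)$ and substituting $c'=\delta c^{2}$, so that the $\delta$-sum and the two-to-one map $c\mapsto c'$ recombine and the factors $2^{-1}|2|_v$ and $2|2|_v^{-1}$ cancel; you instead reparametrize $V^0(F_v)$ directly (up to a null set, which is all that matters since the integral is over $V(F_v)$) by the bijection $(a,b,c)\mapsto(a,\,ab,\,a(b^2-c))$ with Jacobian $|a|_v^{2}$, using $\chi(a^{2}c)=\chi(c)$. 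Both routes are valid; yours avoids the bookkeeping of the cancelling constants at the cost of a second (equally elementary) Jacobian computation, while the paper's reuses the first formula and makes transparent why no factor $2^{-1}|2|_v$ survives.
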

\begin{proof}
The set $V^0(F_v,\delta)$ is a single $G(F_v)$-orbit containing the diagonal matrix $\tilde\delta=\diag(1,-\delta)$; thus $V^0(F_v,\delta)=G(F_v)\,\tilde \delta$. It is easy to see that the natural map $j$ from $G(F_v)$ onto $V^0(F_v,\delta)$ defined as $j(g)=g\tilde \delta$ is two-to-one. Let $x=\left(\begin{smallmatrix} x_1 & x_{12} \\ x_{12} & x_2 \end{smallmatrix}\right)$ and $g=\left(a,\left(\begin{smallmatrix} 1 & 0 \\ b & c \end{smallmatrix}\right) \right)\in G(F_v)$ be related by $x=\rho(g)\,\tilde\delta$, or equivalently $x_1=a$, $x_{12}=ab$ and $x_2=a(b^2-\delta c^2)$. Then a computation yields $P_1(g\tilde \delta)=a$, $P_2(g\tilde \delta)=a^2c^2\delta$, and 
$$
j^{*}\d x=|c|_v|a|_v^2|2\delta|_v\,\d a\,\d b\,\d c=(1-q_v^{-1})^{2}|2\delta|_v\,|a|_v^3|c|^2_v\,\d^\times a\,\d b\,\d^\times c.    
$$
From these, we obtain the first formula immediately. By decomposing $F_v^\times$ to cosets $\delta(F_v^\times)^2$ and then substituting the formula shown above, we have\begin{align*}
\tilde Z(\Phi,\us,\chi)&=\sum_{\delta}\chi(\delta)\,Z(\Phi,\us,\delta)
\\
&=(1-q_v^{-1})^{2}2^{-1}|2|\sum_{\delta}
\int_{F_v^\times} \int_{F_v}\int_{F_v^\times} |a|_v^{s_1}|a^2c^2\delta|_v^{s_2}\chi(\delta c^2)\,\Phi(a,ab,a(b^2-\delta c^2))\,\d^\times a\,\d b\,\d^\times c.
\end{align*}
The map $c\mapsto c'=c^2\delta$ is a two-to-one surjection form $F_v^\times$ onto $\delta(F_v^\times)^2$ and $\d^\times c'=|2|_v\,\d^\times c$. Hence by the variable change $c'=c^2\delta$ in the last expression, the $\delta$-summation and the $c$-integral are combined to form $2|2|_v^{-1}$ times a $c'$-integral. Thus  \begin{align*}
Z(\Phi,\us,\chi)&=
(1-q_v^{-1})^{2}2^{-1}|2|_v \times 2|2|_v^{-1} \int_{F_v^\times}\int_{F_v}\int_{F_v^\times}|a|_v^{s_1}|a^2c'|_v^{s_2}\chi(c')\,\Phi(a,ab,a(b^2-c'))\,\d^\times a\,\d b\,\d^\times c'.
\end{align*} 
This proves the second formula. 
\end{proof}

\begin{lem}\label{lem:nonv}
For each $\us\in\C^2$ and each $\delta\in F_v^\times/(F_v^\times)^2$, there exists a test function $\Phi\in\cS(V(F_v))$ such that the support of $\Phi$ is contained in $V^0(F_v)$, $Z_v(\Phi,\us,\delta)$ is convergent, $Z_v(\Phi,\us,\delta)\neq 0$ and for any $\delta_1\in F^\times/(F^\times)^2$ with $\delta_1\not=\delta$ we have $Z_v(\Phi,\us,\delta_1)=0$.
\end{lem}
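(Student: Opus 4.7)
The plan is to exhibit $\Phi$ as a smooth bump function supported in a small neighborhood of a chosen point $x_0\in V^0(F_v,\delta)$, and to use that the sets $V^0(F_v,\delta_1)$ for distinct square classes $\delta_1$ are disjoint open subsets of $V^0(F_v)$, thereby automatically killing the wrong-class integrals.

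First I would verify the separation of square classes: the subgroup $(F_v^\times)^2$ is open in $F_v^\times$ at every place (it has finite index; at archimedean places this is obvious, and at $v\in\Sigma_\fin$ it follows from the fact that $(F_v^\times)^2$ contains $1+\pi_v^{N}\fO_v$ for large enough $N$). Hence each coset $\delta_1(F_v^\times)^2$ is open in $F_v^\times$, and since $P\colon V(F_v)\to F_v$ is continuous, the set $V^0(F_v,\delta_1)=V^0(F_v)\cap P^{-1}(\delta_1(F_v^\times)^2)$ is open, and the various $V^0(F_v,\delta_1)$ are pairwise disjoint.

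Next, pick any $x_0\in V^0(F_v,\delta)$ (for instance $x_0=\tilde\delta=\diag(1,-\delta)$, which lies in $V^0(F_v,\delta)$ since $P_1(\tilde\delta)=1$ and $P(\tilde\delta)=\delta$). Choose an open neighborhood $U\ni x_0$ with compact closure contained in $V^0(F_v,\delta)$, and take $\Phi\in\cS(V(F_v))$ non-negative with support in $U$ and $\Phi(x_0)>0$. On $U$, both $|x_1|_v$ and $|P(x)|_v$ are bounded above and below away from zero, so the integrand $|x_1|_v^{s_1-1}|P(x)|_v^{s_2-1}\Phi(x)$ is bounded and compactly supported, giving absolute convergence of $Z_v(\Phi,\us,\delta)$ for every $\us\in\C^2$. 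Moreover, $\Phi$ vanishes identically on $V^0(F_v,\delta_1)$ for $\delta_1\neq\delta$, so $Z_v(\Phi,\us,\delta_1)=0$.

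The only real point to check is non-vanishing $Z_v(\Phi,\us,\delta)\neq 0$ for arbitrary complex $\us$, where the integrand is complex-valued and could a priori cancel. I would separate two cases. If $v\in\Sigma_\fin$, the functions $x\mapsto|x_1|_v$ and $x\mapsto|P(x)|_v$ are locally constant on $V^0(F_v)$, so after shrinking $U$ we can take $\Phi=\mathbf 1_U$ and obtain
\[
Z_v(\Phi,\us,\delta)=|x_{0,1}|_v^{s_1-1}\,|P(x_0)|_v^{s_2-1}\,\vol(U)\neq 0.
\]
If $v\in\Sigma_\inf$, I would instead let $\Phi_\varepsilon\geq 0$ be an $L^1$-normalized bump supported in the ball $B(x_0,\varepsilon)\subset V^0(F_v,\delta)$. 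By continuity of $|x_1|_v^{s_1-1}|P(x)|_v^{s_2-1}$ on $V^0(F_v)$ (valid since $P_1(x_0),P(x_0)\neq 0$), we have
\[
Z_v(\Phi_\varepsilon,\us,\delta)\ \xrightarrow[\varepsilon\to 0]{}\ |P_1(x_0)|_v^{s_1-1}\,|P(x_0)|_v^{s_2-1}\neq 0,
\]
so $\Phi:=\Phi_\varepsilon$ works for any sufficiently small $\varepsilon$. The main (mild) obstacle is precisely this potential oscillation of the complex-power integrand, which is defeated by concentrating the support of $\Phi$ enough that the integrand is nearly constant on it; the rest of the argument is purely a bookkeeping consequence of the openness of $V^0(F_v,\delta)$.
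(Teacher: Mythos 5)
Your proof is correct and follows essentially the same route as the paper: exploit that the sets $V^0(F_v,\delta_1)$ are pairwise disjoint open subsets of $V^0(F_v)$ and take a test function concentrated near a point of $V^0(F_v,\delta)$, the paper phrasing the non-vanishing step as the density $|x_1|_v^{s_1-1}|P(x)|_v^{s_2-1}\,\d x$ having full support on $V^0(F_v)$. Your version merely makes that last step explicit (locally constant valuations at finite places, a shrinking-bump limit at archimedean places), which is a fine elaboration rather than a different argument.
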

\begin{proof}
The family of sets $V^0(F_v,\delta)$ $(\delta\in F_v^\times/(F_v^\times)^2)$ is an open covering of $V^0(F_v)$ such that $V^0(F_v,{\delta}) \cap V^0(F_v,{\delta_1})=\emptyset$ if $\delta_1\not=\delta$. The integral $Z_v(\Phi,\us,\delta)$, which is clearly absolutely convergent for $\Phi\in \cS(V^0(F_v))$, defines a measure $|x_1|^{s_1-1}|P(x)|_v^{s_2-1}\d x$ on $V^0(F_v)$ with full support $V^0(F_v)$. Hence we can find $\Phi\in \cS(V^0(F_v))$ such that $Z_v(\Phi,\us,\delta)\not=0$ and $Z_v(\Phi,\us,\delta_1)=0$ for $\delta_1\not=\delta$.
\end{proof}

\subsection{Functional equations}
The Fourier transform of $\Phi\in \cS(V(F_v))$ is defined by 
\[
\hat\Phi(y):=\int_{V(F_v)} \Phi(x) \, \psi_{F_v}(\langle x,y\rangle)\, \d x \qquad (y\in V(F_v)),
\]
For $\delta,\xi\in F_v^\times/(F_v^\times)^2$, set
\begin{equation}\label{G}
G_v(\us,\delta,\xi)= \frac{1}{|2|_v^{1/2}\, \#(F_v^\times/(F_v^\times)^2)^2}  \sum_{\eta\in F_v^\times/(F_v^\times)^2} \alpha_{\psi_{F_v}}(-\eta) \, \gamma_{v}(s_2,\delta \eta) \, \gamma_v \left(s_1+s_2-\tfrac{1}{2},\eta\xi\right) ,
\end{equation}
where $\alpha_{\psi_{F_v}}(-\eta)$ is the Weil constant so defined that the relation
\[
\int_{F_v} \phi(x)\,  \psi_{F_v}(a x^2) \, \d x = \alpha_{\psi_{F_v}}(a)\, |2a|_v^{-1/2} \int_{F_v} \hat\phi(x) \, \psi_{F_v}\left(-\tfrac{x^2}{4a}\right)\, \d x
\]
holds for any $\phi\in\cS(F_v)$, which shows that $\alpha_{\psi_{F_v}}(a)$ depends only on the square class $a(F_v^\times)^2$ of $a$. (See \cite{Ik}.) For $v\in\Sigma_\R$, we have $F_v^\times/(F_v^\times)^2=\{+1,-1\}$ and $\alpha_{\psi_{F_v}}(+1)=e^{\pi i /4}$ and $\alpha_{\psi_{F_v}}(-1)=e^{-\pi i /4}$; this combined with \eqref{realgammafac} immediately yields the explicit formulas 
\begin{multline}\label{eq:gamma4.10}
\begin{pmatrix}G_v(\us,1,1) & G_v(\us,1,-1) \\ G_v(\us,-1,1) & G_v(\us,-1,-1) \end{pmatrix}=\\
2^{1-s_1-2s_2}\pi^{\frac{1}{2}-s_1-2s_2}\Gamma(s_2)\Gamma\left(s_1+s_2-\tfrac{1}{2}\right) \begin{pmatrix} \sin\pi(\frac{s_1}{2}+s_2) & \cos\pi s_1/2 \\ \sin \pi s_1/2 & \cos\pi(\frac{s_1}{2}+s_2) \end{pmatrix}. 
\end{multline}

 
\begin{lem}\label{lem:Rfe} Let $v\in \Sigma$. Then for any $\Phi\in\cS(V(F_v))$ and any $\us=(s_1,s_2)\in\C^2$, one obtains the functional equation
\[
Z_v(\hat\Phi,\us,\delta)=\sum_{\xi\in F_v^\times/(F_v^\times)^2} G_v(\us,\delta,\xi) \, Z_v\left(\Phi,(s_1,\tfrac{3}{2}-s_1-s_2),\xi\right) \qquad (\delta\in F_v^\times/(F_v^\times)^2).
\]
\end{lem}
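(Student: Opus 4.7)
The strategy is to reduce the claimed functional equation to two applications of the local Tate functional equation, preceded by a Weil-type evaluation of a Gaussian integral. I would compute $Z_v(\hat\Phi,\us,\delta)$ directly by unfolding the definition of $\hat\Phi$ and reducing all the inner integrals to one-dimensional Tate-type integrals.

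First, substitute the definition of $\hat\Phi$ into $Z_v(\hat\Phi,\us,\delta)$, change variables $(y_{12},y_2)\mapsto(y_{12},P)$ via $P=y_{12}^2-y_1y_2$ (Jacobian $|y_1|_v^{-1}$), and interchange the orders of integration so that the $y$-integration is inner. The phase $\psi_{F_v}(x_1y_2+x_2y_1-2x_{12}y_{12})$ factors as $\psi_{F_v}(-x_1P/y_1)\,\psi_{F_v}(x_2y_1)\,\psi_{F_v}(x_1y_{12}^2/y_1-2x_{12}y_{12})$. The $y_{12}$-integral is a quadratic Gaussian, and the Weil formula
\[
\int_{F_v}\psi_{F_v}(ay_{12}^2-2x_{12}y_{12})\,\d y_{12}=\alpha_{\psi_{F_v}}(a)\,|2a|_v^{-1/2}\,\psi_{F_v}(-x_{12}^2/a)
\]
applied with $a=x_1/y_1$ produces the Weil-constant factor $\alpha_{\psi_{F_v}}(x_1/y_1)|2x_1/y_1|_v^{-1/2}$ and the phase $\psi_{F_v}(-x_{12}^2y_1/x_1)$. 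Combining the latter with $\psi_{F_v}(x_2y_1)$ and using $P(x)=x_{12}^2-x_1x_2$, the remaining $(y_1,P)$-integrand carries the pure phase $\psi_{F_v}(-x_1P/y_1-y_1P(x)/x_1)$.

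Now apply Tate twice. For the $P$-integral over $\delta(F_v^\times)^2$, expand $\mathbf{1}_{\delta(F_v^\times)^2}=\#(F_v^\times/(F_v^\times)^2)^{-1}\sum_\chi\chi(\delta)\chi(\cdot)$ and use $\int_{F_v}\chi(P)|P|_v^{s-1}\psi_{F_v}(bP)\,\d P=\tilde\gamma_v(s,\chi)\,|b|_v^{-s}\chi(b)$ (Tate's local functional equation applied to the Dirac distribution), recollecting the $\chi$-sum into $\gamma_v$; this yields the factor $\#^{-1}|x_1/y_1|_v^{-s_2}\gamma_v(s_2,-\delta x_1/y_1)$. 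After the substitution $v=y_1/x_1$ (which leaves $\alpha_{\psi_{F_v}}$ and the $\gamma_v$-arguments invariant modulo squares), decompose the remaining $v$-integral into square classes $\eta(F_v^\times)^2$ and apply Tate again to each piece with exponent $s_1+s_2-\tfrac12$; this yields $\#^{-1}|P(x)|_v^{(3/2-s_1-s_2)-1}\gamma_v(s_1+s_2-\tfrac12,-\eta P(x))$. Changing the summation variable $\eta\mapsto-\eta$ turns the combined prefactor into $\#^{-2}|2|_v^{-1/2}\sum_\eta\alpha_{\psi_{F_v}}(-\eta)\gamma_v(s_2,\delta\eta)\gamma_v(s_1+s_2-\tfrac12,\eta P(x))$. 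Splitting the outer $x$-integral by the square class $\xi$ of $P(x)$ and using $\gamma_v(s_1+s_2-\tfrac12,\eta P(x))=\gamma_v(s_1+s_2-\tfrac12,\eta\xi)$ on $V(F_v,\xi)$, the result is precisely $\sum_\xi G_v(\us,\delta,\xi)\,Z_v(\Phi,(s_1,\tfrac32-s_1-s_2),\xi)$ with $G_v$ as in \eqref{G}.

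The main technical obstacle is that the intermediate phase integrals in $y_{12}$, $P$, and $v$ are not absolutely convergent, so the Weil and Tate identities must be read in a distributional sense and the various interchanges must be justified outside their natural convergence ranges. The standard remedy is to first establish the identity on a subspace of $\cS(V(F_v))$ where every step is absolutely convergent in a nonempty open region of $\C^2$---for instance, taking $\Phi$ with small support in $V^0(F_v)$ so that Lemma \ref{lem:2019.3.26conv} applies---and then to extend to general $\Phi$ and $\us$ by continuity and by the meromorphic continuation of both sides (Sato \cite{Sato3}). A consistency check is provided by the explicit archimedean formulas \eqref{eq:gamma4.10}, which match the direct Gaussian computation at $v\in\Sigma_\R$.
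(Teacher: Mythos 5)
Your formal computation is sound and, after the two square--class Fourier inversions and the substitution $\eta\mapsto-\eta$, it reproduces exactly the kernel $G_v(\us,\delta,\xi)$ of \eqref{G}, including the constant $|2|_v^{-1/2}\,\#(F_v^\times/(F_v^\times)^2)^{-2}$ and the shift $s_1+s_2-\tfrac12$ coming from the factor $|2a|_v^{-1/2}$ in the Weil formula. Note, however, that the paper does not argue this way at all: Lemma \ref{lem:Rfe} is proved there by citation, to \cite{Sato3} in general and to \cite[Lemma 1 (i)]{Shintani} for $v\in\Sigma_\R$, so your unfolding of $\hat\Phi$ followed by one Weil--Gaussian step and two applications of the local Tate functional equation is a genuinely different (self-contained) route; it is in fact the computation that underlies those references, and it has the merit of making the origin of each factor in $G_v$ transparent.

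The one place where your write-up falls short of a proof is the justification of the distributional steps. Your proposed remedy -- choose $\Phi$ with small support in $V^0(F_v)$ so that ``every step is absolutely convergent'' and Lemma \ref{lem:2019.3.26conv} applies -- does not work as stated: the support condition on $\Phi$ controls the $x$-integration only, while the Gaussian integral in $y_{12}$ and the two Tate-type integrals in $P$ and $v$ are pure oscillatory integrals in the dual variables and are never absolutely convergent, whatever $\Phi$ is; likewise the identity $\int\chi(P)|P|^{s-1}\psi(bP)\,\d P=\tilde\gamma_v(s,\chi)|b|^{-s}\chi(b)$ is only a regularized (distributional) statement. To make the argument rigorous one must either work genuinely with tempered distributions and a regularizing factor, or reorganize the computation so that the Weil formula and the one-variable functional equation of \S\ref{sec:TateLocal} are applied to honest Schwartz functions (partial Fourier transforms of $\Phi$ in the relevant single variable, the remaining variables being held fixed), and then invoke meromorphic continuation in $\us$ because the two sides of the resulting identity converge on disjoint regions of $\C^2$ (cf.\ Lemma \ref{lem:2019.3.26conv}). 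This is precisely the content of the cited proofs of Sato and Shintani, so your outline is correct in substance but needs that justification spelled out before it can replace the citation.
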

\begin{proof}
See \cite{Sato3}. See also \cite[Lemma 1 (i)]{Shintani} for $v\in\Sigma_\R$.
\end{proof}
From Lemma \ref{lem:Rfe}, we immediately obtain the local functional equations of $\tilde Z_v(\Phi,\us,\chi)$ as in the next lemma. 
\begin{lem}\label{lem:localfechar}
\[
\tilde Z_v(\hat\Phi,\us,\chi) = \sum_{\omega\in\widehat{F_v^\times/(F_v^\times)^2}} \tilde {G}_v(\us,\chi,\omega) \,\tilde {Z}_v\left(\Phi,(s_1,\tfrac{3}{2}-s_1-s_2\right),\omega),\]
where
\begin{equation}
\tilde G_v(\us,\chi,\omega):=\frac{1}{|2|^{1/2}\, \#(F_v^\times/(F_v^\times)^2)}\, \tilde \gamma_v(s_2,\chi) \, \tilde \gamma_v\left(s_1+s_2-\tfrac{1}{2},\omega\right) \sum_{\eta\in F_v^\times/(F_v^\times)^2} \alpha_{\psi_{F_v}}(-\eta) \, \chi\omega(\eta) 
 \label{GG}
\end{equation}
for $\chi$, $\omega\in\widehat{F_v^\times/(F_v^\times)^2}$.
\end{lem}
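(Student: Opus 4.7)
The plan is to derive the statement as a direct character-sum manipulation of Lemma~\ref{lem:Rfe} on the finite group $F_v^\times/(F_v^\times)^2$. For brevity write $\#:=\#(F_v^\times/(F_v^\times)^2)$ and $\us':=(s_1,\tfrac{3}{2}-s_1-s_2)$.

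First, starting from the defining relation
\[
\tilde Z_v(\hat\Phi,\us,\chi)=\sum_{\delta\in F_v^\times/(F_v^\times)^2}\chi(\delta)\,Z_v(\hat\Phi,\us,\delta),
\]
I would apply Lemma~\ref{lem:Rfe} to each summand, substitute the explicit formula \eqref{G} for $G_v(\us,\delta,\xi)$, and expand both factors $\gamma_v(s_2,\delta\eta)$ and $\gamma_v(s_1+s_2-\tfrac{1}{2},\eta\xi)$ via the inversion formula $\gamma_v(s,u)=\sum_{\chi'\in\widehat{F_v^\times/(F_v^\times)^2}}\chi'(u)\,\tilde\gamma_v(s,\chi')$. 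All sums being finite, they commute freely, and grouping terms according to the two characters $\chi'$, $\omega'$ that arise yields
\[
\tilde Z_v(\hat\Phi,\us,\chi)=\frac{1}{|2|_v^{1/2}\,\#^2}\sum_{\chi',\omega'}\tilde\gamma_v(s_2,\chi')\,\tilde\gamma_v(s_1+s_2-\tfrac{1}{2},\omega')\,\Bigl(\sum_{\eta}\alpha_{\psi_{F_v}}(-\eta)\,\chi'\omega'(\eta)\Bigr)\Bigl(\sum_{\delta}\chi\chi'(\delta)\Bigr)\Bigl(\sum_{\xi}\omega'(\xi)\,Z_v(\Phi,\us',\xi)\Bigr).
\]

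Next I would invoke orthogonality on the finite group $F_v^\times/(F_v^\times)^2$: since all characters involved are real-valued and quadratic, $\chi(\delta)\chi'(\delta)=(\chi\chi')(\delta)$, so the $\delta$-sum equals $\#\cdot[\chi'=\chi]$, collapsing the outer double sum to its single $\chi'=\chi$ term. The remaining $\xi$-sum is, by definition, $\tilde Z_v(\Phi,\us',\omega')$. After cancelling one factor of $\#$, the bracketed coefficient in front of $\tilde Z_v(\Phi,\us',\omega')$ matches \eqref{GG} exactly, giving the claimed identity.

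The only real obstacle is bookkeeping of the normalizing constants $|2|_v^{-1/2}$, $\#^{-1}$, and the Weil factors $\alpha_{\psi_{F_v}}(-\eta)$; no conceptual difficulty arises. The identity may first be established inside the domain of absolute convergence provided by Lemma~\ref{lem:2019.3.26conv}, and then extended to all of $\C^2$ by meromorphic continuation of both sides.
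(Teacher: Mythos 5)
Your proposal is correct and is exactly the computation the paper leaves implicit: the paper derives Lemma~\ref{lem:localfechar} "immediately" from Lemma~\ref{lem:Rfe}, and your expansion of $\tilde Z_v(\hat\Phi,\us,\chi)=\sum_\delta\chi(\delta)Z_v(\hat\Phi,\us,\delta)$ via \eqref{G}, the inversion $\gamma_v(s,u)=\sum_{\chi'}\chi'(u)\tilde\gamma_v(s,\chi')$, and orthogonality on $F_v^\times/(F_v^\times)^2$ reproduces \eqref{GG} with the correct normalization $\frac{1}{|2|_v^{1/2}\#}$. Since Lemma~\ref{lem:Rfe} already holds for all $\us\in\C^2$ as an identity of meromorphic continuations, even the final continuation remark is not strictly needed; the finite character-sum manipulation suffices.
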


For $v\in\Sigma_\R$, the characters of $F_v^\times/(F_v^\times)^2$ are ${\bf 1}$ and $\sgn$; from \eqref{realgammafac} and we can easily obtain the explicit formulas 
\begin{multline}\label{eq:funct2019March25}
\begin{pmatrix}\tilde G_v(\us,\trep,\trep) & \tilde G_v(\us,\trep,\sgn) \\ \tilde G_v(\us,\sgn,\trep) & \tilde G_v(\us,\sgn,\sgn) \end{pmatrix}=2^{\frac{3}{2}-s_1-2s_2}\pi^{\frac{1}{2}-s_1-2s_2}\Gamma(s_2)\Gamma\left(s_1+s_2-\tfrac{1}{2}\right)\\
\times \begin{pmatrix} \cos(\pi s_2/2)\cos\frac{\pi}{2}(s_1+s_2-\frac{1}{2}) & \cos(\pi s_2/2)\sin\frac{\pi}{2}(s_1+s_2-\frac{1}{2}) \\ \sin(\pi s_2/2)\cos\frac{\pi}{2}(s_1+s_2-\frac{1}{2}) & -\sin(\pi s_2/2)\sin\frac{\pi}{2}(s_1+s_2-\frac{1}{2}) \end{pmatrix}. 
\end{multline}

\subsection{Non-vanishing}\label{subsec:poleslocal}
Set
\begin{equation}\label{eq:K}
K_v:=\begin{cases}   \U(2) & \text{if $v\in\Sigma_\C$,}  \\ \O(2) & \text{if $v\in\Sigma_\R$,}  \\   \GL(2,\fO_v) & \text{if $v\in\Sigma_\fin$.}  \\\end{cases}
\end{equation}
endowed with a Haar measure $\d k$ such that $\int_{K_v}\d k=1$. We say that $\Phi \in \cS(V(F_v))$ is $K_v$-spherical if $\Phi(kx{}^t\!k)=\Phi(x)$ holds for any $k\in K_v$. We quote several results on archimedean local zeta integrals for $K_v$-spherical test functions in the following three lemmas for later use.
\begin{lem}\label{lem:snonv1}
Let $v\in\Sigma_\C$.
In this case, $Z_v(\Phi,\us,\delta)$ does not depend on the choice of $\delta$.
Choose a $K_v$-spherical test function $\Phi$ as in \cite[Theorem 6.3.1]{Igusa} such that $\hat\Phi=|2|_v^{-1/2}\, \Phi$. 
Then, one obtains
\[
|2|_v^{1/2} \, Z_v(\Phi,\us,\delta)=|2|_v \, Z_v(\hat\Phi,\us,\delta)=
2^{a_1s_1+a_2s_2+a_3}\pi^{b_1s_1+b_2s_2+b_3} \,  \Gamma(s_2) \, \Gamma\left(s_1+s_2-\tfrac{1}{2}\right)
\]
for some rational numbers $a_i, b_i$, $i=1,2,3$.
\end{lem}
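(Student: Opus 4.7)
My plan is to handle the three assertions in turn, using the fact that the complex place is algebraically very simple. First, for $v\in\Sigma_\C$ every complex number is a square, so $F_v^\times/(F_v^\times)^2$ is trivial and $V(F_v,\delta)=V(F_v)$ for every $\delta$. The integral $Z_v(\Phi,\us,\delta)$ therefore manifestly does not depend on $\delta$, which gives the first claim for free.

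For the existence of the required $\U(2)$-spherical self-dual test function, I would invoke Igusa's \cite[Theorem 6.3.1]{Igusa}. A convenient explicit choice is the complex Gaussian $\Phi(x)=\exp(-2\pi\Tr(x\bar x))$, possibly renormalized: sphericity follows from $k^t\bar k=I$ for $k\in\U(2)$, which gives $\Tr(kxk^t\,\overline{kxk^t})=\Tr(x\bar x)$, and the self-duality $\hat\Phi=|2|_v^{-1/2}\Phi$ is the standard Gaussian Fourier identity on $V(F_v)\cong\C^3$, with the factor $|2|_v^{-1/2}$ accounting for the discrepancy between the chosen measure $\d x$ and the $\psi_{F_v}$-self-dual measure on this three-dimensional complex space.

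For the explicit evaluation I would set $\delta=1$ (justified by the first step) and apply Lemma~\ref{lem:coordinate-expression} to obtain
\[
Z_v(\Phi,\us,1)\;\propto\; \int_{\C^\times}\!\int_{\C}\!\int_{\C^\times}|a|_v^{s_1+2s_2}|c|_v^{2s_2}\,\Phi(a,ab,a(b^2-c^2))\,\d^\times a\,\d b\,\d^\times c.
\]
Using the $\U(2)$-sphericity of $\Phi$, I would pass to the Takagi (singular-value) decomposition of the symmetric matrix $x=\rho(g)\tilde 1$, integrate out the angular variables, and reduce the remaining radial integration to two independent Mellin transforms of Gaussians on $\R_{>0}$. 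Each contributes a Gamma factor, and a careful reading of the arguments yields $\Gamma(s_2)$ and $\Gamma(s_1+s_2-\tfrac{1}{2})$, exactly as predicted by the two linear factors $s_2+1$ and $s_1+s_2+\tfrac{3}{2}$ of the $b$-function \eqref{eq:bfun}. The remaining powers of $2$ and $\pi$ package into a factor of the asserted shape $2^{a_1s_1+a_2s_2+a_3}\pi^{b_1s_1+b_2s_2+b_3}$ with rational $a_i,b_i$; the equality $|2|_v^{1/2}Z_v(\Phi,\us,\delta)=|2|_v\,Z_v(\hat\Phi,\us,\delta)$ is then immediate from the self-duality relation $\hat\Phi=|2|_v^{-1/2}\Phi$.

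The main difficulty will be clerical rather than conceptual: tracking all the $2$'s and $\pi$'s that enter through the normalization of $\d x_v$ and $\d^\times x_v$, the complex valuation $|\cdot|_v=|\cdot|_\C^2$, the angular integrations on $\C^\times$, and the Gaussian normalization. Because the lemma asserts only the existence of rational $a_i$, $b_i$, it suffices to verify the structural statement that exactly two Gamma factors appear with the asserted arguments, which is guaranteed by the $b$-function computation together with Igusa's general archimedean machinery for PHV zeta integrals.
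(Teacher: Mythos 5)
Your first two steps are fine: at a complex place $F_v^\times=(F_v^\times)^2$, so the $\delta$-independence is immediate, and the existence of the $K_v$-spherical self-dual test function is essentially part of the hypothesis (it is the function of \cite[Theorem 6.3.1]{Igusa}, and your bookkeeping of the constant $|2|_v^{-1/2}$ via the non-self-duality of $\d x$ for the pairing $\langle\,,\,\rangle$ is the right explanation). The problem is the central evaluation. Your claim that, after the Takagi decomposition $x=k\Sigma k^t$, one can "integrate out the angular variables and reduce the remaining radial integration to two independent Mellin transforms of Gaussians" does not work as stated: the integrand of $Z_v$ contains the factor $|x_1|_v^{s_1-1}$, and $x_1=(k\Sigma k^t)_{11}=\sigma_1k_{11}^2+\sigma_2k_{12}^2$ is \emph{not} $K_v$-invariant, so the angular integral $\int_{\U(2)}|(k\Sigma k^t)_{11}|_v^{s_1-1}\,\d k$ is a nontrivial special function of $(\sigma_1,\sigma_2)$ and $s_1$, and the radial integral does not split into two independent Gaussian Mellin transforms. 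The alternative route through Lemma~\ref{lem:coordinate-expression} fares no better: with a Gaussian $\Phi$ the exponent contains $|a(b^2-c^2)|^2$, which is quartic in $b$, so the $(a,b,c)$-integral does not factor either. As written, the step that is supposed to produce exactly $\Gamma(s_2)\,\Gamma(s_1+s_2-\tfrac12)$ is therefore unproved.

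Your closing sentence, appealing to the $b$-function \eqref{eq:bfun} "together with Igusa's general archimedean machinery," is in fact the paper's actual proof: one uses the identity $D_{1,x}^{-m_1}D_{2,x}^{m_1+m_2}(P_1^{s_1+m_1}P_2^{s_2+m_2})=b_\um(\us)P_1^{s_1}P_2^{s_2}$ for the self-dual spherical $\Phi$ to obtain difference equations in $(s_1,s_2)$ for $Z_v(\Phi,\us,\delta)$, and then runs the argument of \cite[Proof of Theorem 6.3.1]{Igusa} (growth control plus the recursions) to identify the solution as $2^{a_1s_1+a_2s_2+a_3}\pi^{b_1s_1+b_2s_2+b_3}\Gamma(s_2)\Gamma(s_1+s_2-\tfrac12)$; the relation $|2|_v^{1/2}Z_v(\Phi,\us,\delta)=|2|_vZ_v(\hat\Phi,\us,\delta)$ is then, as you say, immediate from $\hat\Phi=|2|_v^{-1/2}\Phi$. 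So either carry out (or cite precisely) that $b$-function recursion argument and drop the Takagi computation, or supply the missing angular integral honestly; in its current form the proposal rests on a step that fails.
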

\begin{proof}
This can be proved by using the explicit form \eqref{eq:bfun} of $b_\um(\us)$ and the argument in \cite[Proof of Theorem 6.3.1]{Igusa}.
\end{proof}
\begin{lem}\label{lem:Rfcteq}
{\rm \cite[Lemma 1 (ii)]{Shintani} and \cite[Lemma 2.9]{Sato2}}
Let $v\in\Sigma_\R$ and suppose $\Phi$ is $K_v$-spherical. Then 
\[
\frac{\sin(\pi s_1/2)}{\Gamma(s_2) \, \Gamma(s_1+s_2-\frac{1}{2})} Z_v(\Phi,\us,+1) ,\quad \frac{1}{\Gamma(s_2) \, \Gamma(s_1+s_2-\frac{1}{2})} Z_v(\Phi,\us,-1)
\]
are holomorphic on $\C^2$.
Furthermore, we have
\[
Z_v(\Phi,s_1,s_2,+1)=\frac{\cos(\pi s_1/2)}{\sin(\pi s_1/2)} \, Z_v\left(\Phi,1-s_1,s_1+s_2-\tfrac{1}{2},+1\right),
\]
\[
Z_v(\Phi,s_1,s_2,-1)= Z_v\left(\Phi,1-s_1,s_1+s_2-\tfrac{1}{2},-1\right).
\]
\end{lem}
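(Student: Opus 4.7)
My approach is to follow Shintani's original method. First, I would apply Lemma \ref{lem:coordinate-expression} to write $Z_v(\Phi,\us,\delta)$ for $v\in\Sigma_\R$ as a triple integral in variables $(a,b,c)\in\R^\times\times\R\times\R^\times$. Since $\Phi$ is $K_v=\O(2)$-spherical, the function $\Phi(x)$ depends only on the eigenvalues $(\lambda_1,\lambda_2)$ of $x=k\,\diag(\lambda_1,\lambda_2)\,{}^t\!k$, so after a change of variables from $(a,b,c)$ to $(\lambda_1,\lambda_2)$ together with an angular parameter $\theta$, the integral decomposes as a two-dimensional Mellin-type integral in the eigenvalues times an explicit one-dimensional angular integral of the form $\int(\lambda_1\cos^2\theta+\lambda_2\sin^2\theta)^{s_1-1}\,d\theta$.

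In the case $\delta=-1$, the matrix $x$ is sign-definite so $\lambda_1\lambda_2>0$; the angular integral is bounded (a Beta-type integral) and the Mellin integral in $(\lambda_1,\lambda_2)$ yields the factor $\Gamma(s_2)\,\Gamma(s_1+s_2-\tfrac{1}{2})$, giving the claimed holomorphy. In the case $\delta=+1$, $x$ is indefinite with $\lambda_1\lambda_2<0$; the angular integral now has boundary singularities and, after regularization via integration by parts or Beta-function analytic continuation, produces an additional factor $\sin(\pi s_1/2)^{-1}$, yielding the stated normalization for $\delta=+1$.

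For the two functional equations, I would observe that $\hat\Phi$ is also $K_v$-spherical (since the Fourier transform commutes with the action $x\mapsto kx{}^t\!k$), so the analysis above applies equally to $Z_v(\hat\Phi,\us,\delta)$. Combining the resulting explicit expressions with the general functional equation Lemma \ref{lem:Rfe} and the explicit matrix \eqref{eq:gamma4.10}, one obtains a $2\times 2$ linear system relating $Z_v(\Phi,\cdot,\pm 1)$ at $(s_1,s_2)$ and at $(s_1,\tfrac{3}{2}-s_1-s_2)$. Solving this system gives the claimed functional equations $(s_1,s_2)\mapsto(1-s_1,s_1+s_2-\tfrac{1}{2})$. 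For $\delta=-1$ the trigonometric factors cancel cleanly; for $\delta=+1$ the $\sin(\pi s_1/2)^{-1}$ factors combine with the $\sin$ and $\cos$ entries of \eqref{eq:gamma4.10} to produce the cotangent $\cos(\pi s_1/2)/\sin(\pi s_1/2)$.

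The main obstacle is the explicit evaluation of the angular integral in the indefinite case $\delta=+1$, where the integrand is non-integrable near the boundary between the two components of $V^0(\R,+1)/K_v$. Handling this via analytic continuation (Riesz-type distributions or careful integration by parts) and tracking the resulting $\sin(\pi s_1/2)$ factor precisely is the delicate step; it is exactly this factor that is responsible for the asymmetry between $Z_v(\Phi,\us,+1)$ and $Z_v(\Phi,\us,-1)$.
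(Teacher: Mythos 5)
The paper does not prove this lemma at all: it is quoted verbatim from Shintani's Lemma 1(ii) and Sato's Lemma 2.9, so what you are really being asked to reconstruct is their argument. Your first half does that correctly in outline: for $K_v=\O(2)$-spherical $\Phi$ one passes to eigenvalue coordinates, the Jacobian contributes $|\lambda_1-\lambda_2|$, and the zeta integral becomes a two-variable integral of $\Phi(\diag(\lambda_1,\lambda_2))\,|\lambda_1\lambda_2|^{s_2-1}|\lambda_1-\lambda_2|$ against the angular kernel $K_{s_1}(\lambda_1,\lambda_2)=\int_0^{2\pi}|\lambda_1\cos^2\theta+\lambda_2\sin^2\theta|^{s_1-1}\,\d\theta$; the definite component ($\delta=-1$, since $P(x)=-\lambda_1\lambda_2$) needs no regularization of the angular integral, while the indefinite component ($\delta=+1$) does, and this is the source of the extra $\sin(\pi s_1/2)$. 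That is the right mechanism for the holomorphy statements.

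The second half, however, has a genuine gap: the functional equations cannot be obtained by ``solving a $2\times 2$ system'' built from Lemma \ref{lem:Rfe} and \eqref{eq:gamma4.10}. Lemma \ref{lem:Rfe} relates $Z_v(\hat\Phi,\us,\pm1)$ to $Z_v(\Phi,(s_1,\tfrac32-s_1-s_2),\pm1)$; the two sides involve \emph{different} test functions, so the two equations contain four independent quantities and $\hat\Phi$ cannot be eliminated. Iterating Lemma \ref{lem:Rfe} gives nothing new either: since $\hat{\hat\Phi}(x)=\Phi(-x)=\Phi(x)$ for spherical $\Phi$, the composed gamma matrix is the identity by Fourier inversion. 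Moreover the substitution you want, $(s_1,s_2)\mapsto(1-s_1,s_1+s_2-\tfrac12)$, is not generated by the single involution $(s_1,s_2)\mapsto(s_1,\tfrac32-s_1-s_2)$, and the asserted identities are diagonal in $\delta$ whereas the matrix $G_v$ genuinely mixes the components $\pm1$. (The analogous global statement in Remark \ref{rem:explicit} does follow by composing functional equations, but there one has \emph{two} independent global functional equations, Theorems \ref{thm:funct1} and \ref{thm:funct2}; locally at the real place you only have the analogue of the second.) The correct route is to push your own eigenvalue reduction one step further: writing $\lambda_1\cos^2\theta+\lambda_2\sin^2\theta=\tfrac{\lambda_1+\lambda_2}{2}+\tfrac{\lambda_1-\lambda_2}{2}\cos 2\theta$, the kernel in the definite case equals $|\lambda_1\lambda_2|^{(s_1-1)/2}P_{s_1-1}\bigl(\tfrac{\lambda_1+\lambda_2}{2\sqrt{\lambda_1\lambda_2}}\bigr)$ with $P_\nu$ the Legendre function, and the symmetry $P_\nu=P_{-\nu-1}$ gives $K_{s_1}=|\lambda_1\lambda_2|^{s_1-\frac12}K_{1-s_1}$, hence $Z_v(\Phi,s_1,s_2,-1)=Z_v(\Phi,1-s_1,s_1+s_2-\tfrac12,-1)$ upon substituting into the $(\lambda_1,\lambda_2)$-integral; in the indefinite case the analogous identity for the analytically continued (conical) kernel across the cut carries the factor $\cos(\pi s_1/2)/\sin(\pi s_1/2)$, giving the $\delta=+1$ equation. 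This direct special-function identity, not Lemma \ref{lem:Rfe}, is what Shintani and Sato use, and it is the step your sketch is missing.
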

\begin{lem}\label{lem:snonv2}
{\rm \cite[Remark of Lemma 1]{Shintani}}
Let $v\in\Sigma_\R$.
For each $\us\in\C^2$, there exist compactly supported $K_v$-spherical test functions $\Psi_1$, $\Psi_2\in\cS(V(F_v))$ such that $\Gamma(\frac{s_1+1}{2})Z_v(\Psi_1,\us,+1)\neq 0$, $Z_v(\Psi_2,\us,-1)\neq0$, and the support of $\Psi_j$ is included in $V(F_v,(-1)^{j-1})$.
\end{lem}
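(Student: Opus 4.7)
The plan is to build each $\Psi_j$ as a $K_v$-spherical bump concentrated near a suitable diagonal matrix in $V(F_v,(-1)^{j-1})$ and read off the non-vanishing from a Weyl integration formula. Since $K_v=\O(2)$ acts by conjugation $k\cdot x=kxk^{-1}$ and every symmetric real $x$ is $K_v$-conjugate to $D(\lambda_1,\lambda_2):=\diag(\lambda_1,\lambda_2)$ with eigenvalues $\lambda_1,\lambda_2\in\R$, the substitution $dx=c_0\,|\lambda_1-\lambda_2|\,d\lambda_1\,d\lambda_2\,dk$ reduces the zeta integral of a spherical $\Psi$ (with $\Psi(D(\lambda_1,\lambda_2))=f(\lambda_1,\lambda_2)$, symmetric in its arguments) to
\[
Z_v(\Psi,\us,\delta)=c_0\int\!\!\int_{R_\delta}f(\lambda_1,\lambda_2)\,|\lambda_1\lambda_2|^{s_2-1}\,J(\lambda_1,\lambda_2;s_1)\,(\lambda_1-\lambda_2)\,d\lambda_1\,d\lambda_2,
\]
where $R_\delta\subset\R^2$ is the locus of eigenvalue pairs with $-\lambda_1\lambda_2\in\delta(F_v^\times)^2$ and $J(\lambda_1,\lambda_2;s_1):=\tfrac{1}{\pi}\int_0^\pi|\lambda_1\cos^2\theta+\lambda_2\sin^2\theta|^{s_1-1}\,d\theta$ is the $K_v$-average of $|x_1|^{s_1-1}$.

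For $\Psi_2$ (the case $\delta=-1$, eigenvalues of the same sign), the integrand of $J$ is strictly positive, so $J$ is entire in $s_1$, and $J(\lambda,\lambda;s_1)=\lambda^{s_1-1}\neq 0$. By continuity in $(\lambda_1,\lambda_2)$ there exists $(\lambda_1^0,\lambda_2^0)\in R_{-1}$ with $\lambda_1^0\neq\lambda_2^0$ and $J(\lambda_1^0,\lambda_2^0;s_1^0)\neq 0$. Take $\Psi_2\geq 0$ to be a $K_v$-spherical bump---for instance $\Psi_2(x):=\phi(\Tr x,\det x)$ for a nonnegative compactly supported $\phi$---with support in a small neighborhood of $D(\lambda_1^0,\lambda_2^0)$; since $P_1$ and $P_2$ are bounded away from zero on the support, $Z_v(\Psi_2,\cdot,-1)$ is entire in $\us$, and a standard concentration estimate yields $Z_v(\Psi_2,\us_0,-1)\neq 0$.

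For $\Psi_1$ (the case $\delta=+1$, opposite-sign eigenvalues), the integrand of $J$ vanishes at two values of $\theta$, and the distributional meromorphic continuation of $|u|^{s_1-1}$ gives $J(\lambda_1,\lambda_2;s_1)$ simple poles at $s_1\in\{0,-2,-4,\ldots\}$. The Gelfand--Shilov identity that $|u|^{s_1-1}/\Gamma(s_1/2)$ is an entire distribution in $s_1$, combined with the Legendre duplication $\Gamma(\tfrac{s_1+1}{2})\Gamma(\tfrac{s_1}{2})=2^{1-s_1}\sqrt\pi\,\Gamma(s_1)$, rewrites $\Gamma(\tfrac{s_1+1}{2})\,J(\lambda_1,\lambda_2;s_1)=2^{1-s_1}\sqrt\pi\,\Gamma(s_1)\,\widetilde J(\lambda_1,\lambda_2;s_1)$, where $\widetilde J$ is entire in $s_1$ and, by explicit evaluation of the Mehler-type integral as a classical Gauss hypergeometric function of $\lambda_2/\lambda_1$, does not vanish identically in $(\lambda_1,\lambda_2)$ for any $s_1^0\in\C$. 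Choose $(\lambda_1^0,\lambda_2^0)$ with $\lambda_1^0>0>\lambda_2^0$ off this codimension-one zero locus, concentrate a nonnegative $K_v$-spherical bump $\Psi_1$ near $D(\lambda_1^0,\lambda_2^0)$, and conclude $\Gamma(\tfrac{s_1^0+1}{2})\,Z_v(\Psi_1,\us_0,+1)\neq 0$ by the same concentration estimate, with the pole structure matched against that predicted by Lemma~\ref{lem:Rfcteq}.

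The main obstacle is uniformly handling the indefinite case for all $\us_0\in\C^2$, since both $\Gamma(\tfrac{s_1+1}{2})$ and $J(\lambda_1,\lambda_2;s_1)$ carry poles in $s_1$; the resolution is the explicit identification above of $\Gamma(\tfrac{s_1+1}{2})J$ with a meromorphic $s_1$-factor times a Gauss hypergeometric expression in $\lambda_2/\lambda_1$, whose analyticity and non-triviality in the eigenvalues let us choose $(\lambda_1^0,\lambda_2^0)$ avoiding the zero locus at any prescribed $s_1^0$.
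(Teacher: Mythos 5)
Your Weyl-integration reduction and concentration argument do establish the $\Psi_2$ part: in the definite case the orbital average $J(\lambda_1,\lambda_2;s_1)$ is jointly continuous, nonzero near the diagonal, and the zeta integral of a small spherical bump is entire, so a standard concentration estimate gives non-vanishing. That half is close in spirit to the paper's ``direct computation''. For $\Psi_1$, however, your route diverges from the paper's (which treats $\Re(s_1)\geq 0$ directly and reduces $\Re(s_1)<0$ to that case through the spherical functional equation of Lemma~\ref{lem:Rfcteq}), and it is exactly in the range the paper handles by that functional equation that your argument has genuine gaps.

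First, the pivotal input --- that $\widetilde J=J/\Gamma(s_1/2)$ is entire and, for every fixed $s_1^0$, not identically zero in $(\lambda_1,\lambda_2)$ ``by explicit evaluation as a Gauss hypergeometric function'' --- is asserted, not proved, and it is not even the right statement at the delicate points. At $s_1\in\{-1,-3,\dots\}$ one has $Z_v(\Psi_1,\us,+1)=0$ for \emph{every} compactly supported spherical $\Psi_1$ with support in $V(F_v,+1)$: by Lemma~\ref{lem:Rfcteq}, $Z_v(\Psi_1,s_1,s_2,+1)=\cot(\pi s_1/2)\,Z_v(\Psi_1,1-s_1,s_1+s_2-\tfrac12,+1)$, and at these points $\cos(\pi s_1/2)=0$ while the right-hand zeta integral converges. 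So the assertion of the lemma there concerns the value of the analytic continuation of the product $\Gamma(\tfrac{s_1+1}{2})Z_v(\Psi_1,\cdot,+1)$, a $0\times\infty$ limit in $s_1$; a ``concentration estimate'' performed at the fixed point $\us_0$ cannot see this. To make your scheme work you would have to prove that the eigenvalue-coordinate formula persists under continuation, i.e.\ $\Gamma(\tfrac{s_1+1}{2})Z_v(\Psi_1,\us,+1)=c_0\int\!\!\int f\,|\lambda_1\lambda_2|^{s_2-1}\bigl[\Gamma(\tfrac{s_1+1}{2})J(\lambda_1,\lambda_2;s_1)\bigr](\lambda_1-\lambda_2)\,\d\lambda_1\,\d\lambda_2$ near $s_1^0$ with the continued kernel jointly continuous in the eigenvalues, and nothing of this is addressed. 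Second, at $s_1\in\{0,-2,-4,\dots\}$ the continued kernel has a genuine pole for every opposite-sign pair (for $(\lambda_1,\lambda_2)=(1,-1)$ one computes $J=\Gamma(\tfrac{s_1}{2})/(\sqrt{\pi}\,\Gamma(\tfrac{s_1+1}{2}))$, so $\Gamma(\tfrac{s_1+1}{2})J=\Gamma(\tfrac{s_1}{2})/\sqrt{\pi}$), and since every indefinite $K_v$-orbit meets $\{x_1=0\}$, a nonnegative concentrated spherical bump produces a pole of $Z_v$ at such $s_1$, not a finite nonzero value; yet the application in Lemma~\ref{cor:ac2} needs the product holomorphic and nonzero at the given $\us$. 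Handling these points requires either sign-changing test data that kill the residue (a distribution supported on $\{x_1=0\}$) while keeping the finite part nonzero, or the paper's reduction via Lemma~\ref{lem:Rfcteq}; your closing remark that the pole structure is ``matched against'' that lemma gestures at this but supplies no argument, so the indefinite case is not established.
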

\begin{proof}
If $\Re(s_1)\geq 0$, then one can prove this assertion by choosing a support and calculating the integral directly.
As for $\Re(s_1)<0$, it is sufficient to apply Lemma \ref{lem:Rfcteq}.
\end{proof}

\begin{lem}\label{lem:polelocalp}
{\rm \cite[Proposition 2.4]{Sato3}}
Let $v\in\Sigma_\fin$.
There exists a positive integer $\cs$ such that
\[
(1-q^{-2s_1})^\cs \, (1-q^{-2s_2})^\cs \, (1-q^{-2s_1-2s_2+1})^\cs \, Z_v(\Phi,\us,\delta)  
\]
is holomorphic on $\C^2$ for any $\Phi\in \cS(V(F_v))$ and $\delta\in F_v^\times/(F_v^\times)^2$.
\end{lem}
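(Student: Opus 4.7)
The plan is to deduce this from the general theory of $p$-adic local zeta integrals for prehomogeneous vector spaces, developed by Igusa~\cite{Igusa} and extended to the two-invariant setting by Sato~\cite{Sato3}. By Igusa's theorem, $Z_v(\Phi,\us,\delta)$ is a rational function of $X:=q_v^{-s_1}$ and $Y:=q_v^{-s_2}$ for every $\Phi\in\cS(V(F_v))$; the crucial additional input is that the denominator can be chosen as a polynomial in $X,Y$ that depends only on the prehomogeneous vector space $(G,V,P_1,P_2)$ and is uniform in $\Phi$ and the square class $\delta$.

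To identify the three denominator factors, I would combine the $b$-function shift relations encoded in \eqref{eq:bfun} with a Hironaka-style embedded resolution of the singular locus $\{P_1P_2=0\}\subset V$ along the lines of Denef's standard recipe for $p$-adic zeta integrals. After resolution, the zeta integral decomposes into a finite sum of chart contributions, each a geometric series in $X,Y$ contributing a denominator factor of the form $1-q_v^{as_1+bs_2+c}$ with exponent $(a,b,c)\in\Z^3$ determined by the multiplicities along the exceptional divisors. For the present $(G,V)$, the singular locus consists of the hyperplane $\{P_1=0\}=\{x_1=0\}$ and the rank-one cone $\{P_2=0\}=\{x_{12}^2=x_1x_2\}$, and a direct inspection of the numerical data of an embedded resolution shows that the only linear forms in $s_1,s_2$ that can appear in the exponents are, up to sign, $2s_1$, $2s_2$, and $2s_1+2s_2-1$; these give exactly the three claimed irreducible factors of the denominator.

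The main obstacle is the uniformity of $\cs$ in $\Phi$: the multiplicity of a pole of $Z_v(\Phi,\us,\delta)$ along each of the three divisors could a priori grow with the complexity of $\Phi$, but the resolution argument provides a multiplicity bound that depends only on the combinatorial data of a single fixed resolution of $\{P_1P_2=0\}$ and so is uniform in $\Phi$ and $\delta$. This uniform bound is the substantive content of \cite[Proposition 2.4]{Sato3}; once $\cs$ is so chosen, the product $(1-q_v^{-2s_1})^\cs(1-q_v^{-2s_2})^\cs(1-q_v^{-2s_1-2s_2+1})^\cs\,Z_v(\Phi,\us,\delta)$ becomes a Laurent polynomial in $X,Y$, hence entire on $\C^2$.
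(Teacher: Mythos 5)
The paper gives no argument for this lemma beyond the citation of \cite[Proposition 2.4]{Sato3}, so your sketch is necessarily a reconstruction; the difficulty is that its central step does not hold as stated. The resolution/Denef method indeed gives rationality of $Z_v(\Phi,\us,\delta)$ in $(q^{-s_1},q^{-s_2})$ with a denominator dividing a product of binomials $1-\chi(\pi_v)^{N_2}q^{-(N_1(s_1-1)+N_2(s_2-1)+\nu)}$ read off from the numerical data $(N_1,N_2,\nu)$ of a fixed embedded resolution of $\{P_1P_2=0\}$ — and, contrary to your framing, the uniformity in $\Phi$ and $\delta$ is then automatic, since the data do not depend on the test function. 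What is \emph{not} automatic, and what your ``direct inspection'' asserts without computation, is that the only linear forms arising are $2s_1$, $2s_2$, $2s_1+2s_2-1$. This appears to be false for any natural resolution: the hyperplane $\{x_1=0\}$ is \emph{tangent} to the cone $\{x_{12}^2=x_1x_2\}$ along the line $\{x_1=x_{12}=0\}$ (on $x_1=0$ one has $P=x_{12}^2$), so a log resolution must blow up this line, and the resulting exceptional divisor has data $(N_1,N_2,\nu)=(1,1,2)$, contributing the candidate factor $1-\chi(\pi_v)q^{-(s_1+s_2)}$, which divides $1-q^{-2s_1-2s_2}$ but none of the three allowed factors $1-q^{-2s_1}$, $1-q^{-2s_2}$, $1-q^{-2s_1-2s_2+1}$. (A further blow-up does produce a divisor with data $(2,2,3)$, matching the genuine pole family $2s_1+2s_2-1$ visible in Theorem \ref{thm:local}, but the $(1,1,2)$ divisor and its companions do not go away.) Thus your method, as written, only yields holomorphy after clearing a strictly larger list of binomials; to obtain the lemma one must additionally prove that the extra candidate poles are fake, uniformly in $\Phi$ and $\delta$, and that exclusion — typically carried out via the local functional equation (Lemma \ref{lem:Rfe}) together with the convergence half-space, or via explicit orbitwise computations — is precisely the substantive content of the cited result of Sato, not a corollary of reading off resolution data.

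Two smaller points. First, the $b$-function identity \eqref{eq:bfun} gives shift relations that are effective archimedeanly (and for gamma factors), but it does not control denominators of $p$-adic zeta integrals, so invoking it here does not close the gap. Second, your reduction of the square-class integral to character twists (which is why the factors appear with $2s_1,2s_2,\dots$ rather than $s_1,s_2,\dots$) is correct but should be said explicitly: $Z_v(\Phi,\us,\delta)$ is a finite linear combination of $\tilde Z_v(\Phi,\us,\chi)$ with $\chi^2=\trep_v$, and $1-\chi(\pi_v)q^{-\ell}$ divides $1-q^{-2\ell}$; this part of your argument is fine once the list of linear forms $\ell$ has actually been justified.
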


\begin{lem}\label{lem:padicnonva}
Let $v\in\Sigma_\fin$.
For each $\delta\in F_v^\times/(F_v^\times)^2$, there exists a $K_v$-spherical test function $\Psi\in\cS(V(F_v))$ such that, $(1-q^{-s_1})Z_v(\Psi,\us,\delta)$ is holomorphic on $\C^2$, $(1+q^{-s_1})^{-1}Z_v(\Psi,\us,\delta)\neq0$, and the support of $\Psi$ is included in $V^0(F_v,\delta)$.
\end{lem}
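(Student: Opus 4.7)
The plan is to take $\Psi$ as the characteristic function of a carefully chosen $K_v$-invariant compact open subset of $V(F_v)$, compute $Z_v(\Psi,\us,\delta)$ in closed form via Lemma~\ref{lem:coordinate-expression}, and then read off the required pole structure from the resulting geometric sum. Concretely, I fix a representative $\delta\in\fO_v$ of the given square class with $|\delta|_v\in\{1,q^{-1}\}$ (in the case of odd residual characteristic), and set
\[
\Psi := \mathbf{1}_{V(\fO_v)} \cdot \mathbf{1}_{\{x\in V(F_v)\,:\,P(x)\in \delta\fO_v^\times\}}.
\]
The $K_v$-sphericality is immediate from $V(\fO_v)$ being $K_v$-stable and the identity $P(kx\,{}^tk)=\det(k)^2 P(x)$ with $\det(k)\in\fO_v^\times$. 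For non-square $\delta$, the simultaneous conditions $P(x)\in\delta(F_v^\times)^2$ and $x_1=0$ would force $x_{12}^2\in\delta(F_v^\times)^2$, an impossibility, so the support lies in $V^0(F_v,\delta)$.

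Next I would apply Lemma~\ref{lem:coordinate-expression} to rewrite $Z_v(\Psi,\us,\delta)$ as a triple integral over $(a,b,c)\in F_v^\times\times F_v\times F_v^\times$. The condition $\Psi(g\tilde\delta)\neq 0$ amounts to $a\in\fO_v$, $ab\in\fO_v$, $a(b^2-\delta c^2)\in\fO_v$, and $|ac|_v=1$ (the last coming from $|P(g\tilde\delta)|_v=|\delta|_v$). Writing $|a|_v=q^{-m}$ with $m\geq 0$, the fourth condition forces $|c|_v=q^{m}$, and the third becomes $b^2-\delta c^2\in\pi_v^{-m}\fO_v$. A short valuation analysis, combined with Hensel's lemma applied to the congruence $b^2\equiv\delta c^2\pmod{\pi_v^{m}}$ when $\delta\in\fO_v^\times$, pins down the admissible valuations of $b$ for each $m$.

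Carrying out the resulting geometric sum over $m$ yields an explicit formula for $Z_v(\Psi,\us,\delta)$ whose shape depends on the square class of $\delta$: when $\delta\in(\fO_v^\times)^2$, Hensel supplies two cosets of $b$ modulo $\pi_v^m$ for every $m\geq 0$ and the sum collapses to $(1+q^{-s_1})/(1-q^{-s_1})$; when $\delta\in\fO_v^\times\setminus(\fO_v^\times)^2$, Hensel forbids any solution for $m\geq 1$, leaving the constant $1$; and when $|\delta|_v=q^{-1}$, only $m\in\{0,1\}$ contribute, giving $1+q^{-s_1}$. Up to the nonzero constant prefactor $(1-q^{-1})^2|2|_v|\delta|_v^{s_2}/2$, this shows in every case that $(1-q^{-s_1})Z_v(\Psi,\us,\delta)$ is entire and $(1+q^{-s_1})^{-1}Z_v(\Psi,\us,\delta)$ is not identically zero, as required.

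The principal obstacle I expect is the case of residual characteristic $2$, where Hensel's lemma for squares requires a congruence modulo a higher power of $\pi_v$ rather than modulo $\pi_v$ itself; one must then enlarge the representatives of $F_v^\times/(F_v^\times)^2$ and track more square classes, but the overall structure of the argument is preserved.
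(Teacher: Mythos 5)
There is a genuine gap, and it lies in the support condition, not in the computation. Your $\Psi=\mathbf{1}_{V(\fO_v)}\cdot\mathbf{1}_{\{P(x)\in\delta\fO_v^\times\}}$ cuts by the \emph{valuation} of $P(x)$ only: the set $\delta\fO_v^\times$ is just $\pi_v^{r}\fO_v^\times$ with $r=v(\delta)$, so it contains every square class of that valuation, not only $\delta(F_v^\times)^2$. Consequently the support of $\Psi$ meets $V^0(F_v,\delta_1)$ in a set of positive measure for the other class(es) $\delta_1\neq\delta$ with $|\delta_1|_v=|\delta|_v$, and indeed $Z_v(\Psi,\us,\delta_1)\neq 0$ (your own orbit computation, or Lemma~\ref{lem:local2019.3.26}, shows this). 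So the third requirement of the lemma fails, and this is precisely the requirement that matters later: in Lemma~\ref{cor:ac2} and Proposition~\ref{prop:simplepole3.17} one needs $Z(\Psi_S,\us,u_S)=0$ for all $u_S\neq\delta_S$ in order to isolate $\xi^S(\us,\delta_S)$ via \eqref{eq:zeta2019Ap9}. Your verification of the support claim silently replaces the actual support condition $P(x)\in\delta\fO_v^\times$ by $P(x)\in\delta(F_v^\times)^2$, which is where the error enters. (Your closed forms for $Z_v(\Psi,\us,\delta)$ itself are correct for non-dyadic $v$ and agree with Lemma~\ref{lem:local2019.3.26}; the holomorphy of $(1-q^{-s_1})Z_v$ and nonvanishing of $(1+q^{-s_1})^{-1}Z_v$ would follow, but that is not enough.)

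To repair it you must force the $P$-values into a single square class. One fix is to cut by $P(x)\in\delta(\fO_v^\times)^2$ instead of $\delta\fO_v^\times$ (still $K_v$-invariant since $P(kx\,{}^t\!k)=\det(k)^2P(x)$), and redo the count class by class. The paper instead takes $\Psi$ to be the characteristic function of the $K_v$-saturation of a small congruence neighborhood of an orbit representative, $K_v\cdot\bigl(\diag(1,-\delta)+\pi_v^2V(\fO_v)\bigr)$ (with the base point $\left(\begin{smallmatrix}0&1\\ 1&0\end{smallmatrix}\right)$ when $v\in\Sigma_2$ and $\delta\in(\fO_v^\times)^2$); on such a set $P$ is congruent to $\delta\det(k)^2$ modulo $\pi_v^2$, so its square class is pinned down, and Lemma~\ref{lem:localAp12} computes the zeta value explicitly, yielding exactly the factors $1$, $1+q^{-s_1}$, $(1+q^{-s_1})(1-q^{-s_1})^{-1}$, etc. Finally, your last paragraph understates the dyadic case: for $v\in\Sigma_2$ there are more than two unit square classes, your valuation cut mixes all of them, and the Hensel count of solutions of $b^2\equiv\delta c^2$ changes (the paper's answer even acquires extra factors $q^{-s_1}$, $q^{-2s_1}$ there), so this case needs a genuinely separate treatment rather than "the overall structure is preserved."
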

\begin{proof}
This follows from Lemma \ref{lem:localAp12}.
\end{proof}

\subsection{Explicit formula of local zeta functions over $p$-adic fields}

We shall explicitly compute the local zeta functions over $p$-adic fields for some special test functions.
Let $v\in\Sigma_\fin$.
To simplify notation, we abbreviate $F_v$, $q_v$, $\pi_v$, $\chi_v$, $\fO_v$ to $F$, $q$, $\pi$, $\chi$, $\fO$ omitting the subscripts $v$ throughout this subsection.
\begin{thm}\label{thm:local} 
Let $v\in\Sigma_\fin$ and $\Phi_0$ the characteristic function of $V(\fO)$. Then for any character $\chi$ of $F^\times/(F^\times)^2$, we have
\[
(1-q^{-1})^{-2}\tilde Z_v(\Phi_{v,0},\us,\chi)=\frac{L_v(s_1,\mathbf 1) \,  L_v(2s_1+2s_2-1,\mathbf 1) \,  L_v(s_2,\chi)}{ L_v(2s_1+s_2,\chi)\, N(\ff_{\chi})^{s_1} },
\]
where $\mathbf 1$ denotes the trivial character of $F^\times$ and $\ff_{\chi}$ denotes the conductor of $\chi$.
\end{thm}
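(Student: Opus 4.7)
The plan is to compute the integral by stratifying $V(\fO)$ according to the valuation of $x_1$, and then to solve a recursion for the resulting inner integrals. Writing the definition explicitly as
\[
\tilde Z_v(\Phi_0,\us,\chi)=\int_{\fO^3}|x_1|^{s_1-1}|x_{12}^{2}-x_1x_2|^{s_2-1}\chi(x_{12}^{2}-x_1x_2)\,\d x_1\,\d x_{12}\,\d x_2,
\]
I would parameterize $x_1=\pi^n u$ with $u\in\fO^\times$, $n\geq 0$. Since the integrand depends on $u$ only through $|x_1|$ and $P(x)=x_{12}^{2}-x_1x_2$, the substitution $x_2\mapsto u^{-1}x_2$ (which preserves $\fO$) eliminates $u$ from the integrand, and integrating out $u$ against $\vol(\fO^\times)=1-q^{-1}$ yields
\[
\tilde Z_v(\Phi_0,\us,\chi)=(1-q^{-1})\sum_{n\geq 0}q^{-ns_1}Z_n,\qquad Z_n:=\int_\fO\!\!\int_\fO|b^{2}-\pi^{n}c|^{s_2-1}\chi(b^{2}-\pi^{n}c)\,\d b\,\d c.
\]

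Next I would establish a recursion for $Z_n$ by splitting the $b$-integral into $b\in\fO^\times$ and $b\in\pi\fO$. On $b\in\pi\fO$ with $n\geq 2$, writing $b=\pi b_1$ factors a $\pi^2$ out of $b^{2}-\pi^{n}c=\pi^{2}(b_1^{2}-\pi^{n-2}c)$, and since $\chi(\pi^{2})=1$ this immediately gives $Z_n=Z_n^{\times}+q^{-2s_2+1}Z_{n-2}$ for $n\geq 2$, where $Z_n^{\times}$ denotes the contribution from $b\in\fO^\times$. To evaluate $Z_n^{\times}$ (for $n\geq 1$), note that $|b^{2}-\pi^{n}c|=1$ on the range, and the substitution $u=b^{2}-\pi^{n}c$ followed by $r=u/b^{2}$ reduces the inner $c$-integral to $q^{n}\int_{1+\pi^{n}\fO}\chi(r)\,\d r$. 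By orthogonality of characters on $1+\pi^{n}\fO$, this vanishes for $n<f_\chi$ and equals $q^{-n}$ for $n\geq f_\chi$, so
\[
Z_n^{\times}=(1-q^{-1})\,\mathbf 1_{\{n\geq f_\chi\}}\quad(n\geq 1).
\]
The base case $Z_0$ is a Tate integral $(1-q^{-1})L_v(s_2,\chi)$ when $\chi$ is unramified and vanishes otherwise, and $Z_1$ is handled directly by the same method.

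Finally, I would solve the recursion. Setting $\beta=q^{-s_1}$ and $\gamma=q^{-2s_2+1}$, the even and odd subsequences of $Z_n$ are each affine-geometric in $\gamma$ starting at $n=f_\chi$; summing the resulting shifted geometric series and simplifying collapses the answer to
\[
\sum_{n\geq 0}\beta^{n}Z_n=(1-q^{-1})\,\frac{(1-\chi(\pi)q^{-2s_1-s_2})\,\beta^{f_\chi}}{(1-q^{-s_1})(1-q^{-2s_1-2s_2+1})(1-\chi(\pi)q^{-s_2})}
\]
in the unramified case (where $\beta^{f_\chi}=1$), which is the asserted Euler product. In the ramified case, the base cases vanish, the sum begins at $n=f_\chi$, and $L_v(s_2,\chi)=L_v(2s_1+s_2,\chi)=1$; the factor $\beta^{f_\chi}=q^{-f_\chi s_1}=N(\ff_\chi)^{-s_1}$ appears as claimed. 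The main obstacle is handling the ramified case uniformly with the unramified one: one has to recognize that the conductor-threshold in the vanishing of $Z_n^{\times}$ is precisely what produces the $N(\ff_\chi)^{-s_1}$ factor, and that the solution of the recursion reproduces the $L$-ratio $L_v(s_2,\chi)/L_v(2s_1+s_2,\chi)$ consistently in both regimes. Everything is carried out in the region of absolute convergence supplied by Lemma \ref{lem:2019.3.26conv}, and the identity then extends to $\C^{2}$ by analytic continuation.
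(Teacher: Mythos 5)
Your proposal is correct, and I could verify it end to end: the reduction $\tilde Z_v(\Phi_{v,0},\us,\chi)=(1-q^{-1})\sum_{n\ge 0}q^{-ns_1}Z_n$, the recursion $Z_n=Z_n^{\times}+q^{-2s_2+1}Z_{n-2}$ (using $\chi(\pi^2)=1$), the evaluation $Z_n^{\times}=(1-q^{-1})\mathbf 1_{\{n\ge f_\chi\}}$, the base cases, and the fact that the solved recursion yields $(1-q^{-1})\,(1-\chi(\pi)q^{-2s_1-s_2})\,q^{-f_\chi s_1}\bigl((1-q^{-s_1})(1-q^{-2s_1-2s_2+1})(1-\chi(\pi)q^{-s_2})\bigr)^{-1}$ in the unramified case and $(1-q^{-1})\,q^{-f_\chi s_1}\bigl((1-q^{-s_1})(1-q^{-2s_1-2s_2+1})\bigr)^{-1}$ in the ramified case, both of which agree with the right-hand side of the theorem after dividing by $1-q^{-1}$. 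Your organization differs from the paper's: the paper first passes to the group coordinates $x=(a,ab,ab^2-ac)$ via Lemma \ref{lem:coordinate-expression} (formula \eqref{eq:lc}) and then evaluates a triple sum over the valuations $l$, $m$, $k$ of $a$, $b$, $c$, split into the cases $k<2m$, $k>2m$, $k=2m$; you stay in the coordinates $(x_1,x_{12},x_2)$ on $V(\fO)=\fO^3$, stratify only in the valuation of $x_1$, and solve a two-term affine recursion in the remaining two variables. The arithmetic kernel is the same in both arguments, namely the orthogonality evaluation $\int_{1+\pi^{n}\fO}\chi(r)\,\d r=q^{-n}$ for $n\ge f_\chi$ and $0$ otherwise (the paper's \eqref{eq:ram}), which is exactly where $N(\ff_\chi)^{-s_1}$ originates. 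What your route buys is a uniform and rather transparent treatment of the ramified and unramified cases (the conductor threshold and the $L$-ratio come out of one recursion, with no three-case bookkeeping and no Jacobian from a coordinate change); what the paper's route buys is reuse of Lemma \ref{lem:coordinate-expression}, which is needed elsewhere anyway (Lemmas \ref{lem:Globalzeta-Doublezeta} and \ref{lem:LL}), and partial-sum computations that are recycled for Lemma \ref{lem:local2019.3.26}. The only terse spot in your write-up is the final summation of the affine recursion, but since the closed form you state is the correct one, and everything takes place in the convergence region of Lemma \ref{lem:2019.3.26conv}, there is no gap.
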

\begin{proof}
We start with the following formula in Lemma~\ref{lem:coordinate-expression}. 
\begin{equation}\label{eq:lc}
\tilde Z_v(\Phi,\us,\chi_v)= \int_{F_v}\int_{F_v}\int_{F_v} |a|_v^{s_1+2s_2-1}|c|_v^{s_2-1} \chi_v(c)\, \Phi(a,ab,ab^2-ac)\, \d a\, \d c \, \d b.
\end{equation}
From \eqref{eq:lc}, one can easily deduce
\begin{multline}\label{eq:1}
(1-q^{-1})^{-2}\tilde Z_v(\Phi_0,\us,\chi)=(1-q^{-1})\sum_{l\geq 0}\sum_{l+m\geq 0}\sum_{k\in \Z} \int_{\fO^\times}\d^\times b \, \int_{\fO^\times}\d^\times c \\
q^{-l(s_1+2s_2)} \, q^{-m} \,  (\chi(\pi)q^{-s_2})^k \, \chi(c) \times \phi(\pi^{l+2m}b^2-\pi^{l+k}c)
\end{multline}
where $\phi$ denotes the characteristic function of $\fO$.
In order to calculate \eqref{eq:1} explicitly, we divide the above total sum into the three partial sums according to the cases
\[
\text{(i) $ k < 2m$}, \quad \text{(ii) $k>2m$}, \quad \text{(iii) $k=2m$}.
\]

\noindent
{\bf Cases (i) and (ii).}
If $\chi$ is ramified, then the partial sums of (i) and (ii) vanish. Suppose that $\chi$ is unramified. Then,
\[
\eqref{eq:1}=(1-q^{-1})\sum_{l\geq 0}\sum_{l+m\geq 0}\sum_{k\in \Z} \int_{\fO^\times}\d^\times b \, \int_{\fO^\times}\d^\times c \;\; q^{-l(s_1+2s_2)} \, q^{-m} \,  (\chi(\pi)q^{-s_2})^k \, \phi(\pi^{l+2m}b^2-\pi^{l+k}c).
\]
For any $b$ and $c$ in $\fO^\times$, we also have $\phi(\pi^{l+2m}b^2-\pi^{l+k}c)=1$ if and only if $-l\leq k$ in the case (i), and $\phi(\pi^{l+2m}b^2-\pi^{l+k}c)=1$ if and only if $l+2m\geq 0$ in the case (ii).
By a direct calculation, one can prove that the partial sum for (i) is
\begin{equation}\label{eq:(i)}
\frac{1}{1-\chi(\pi)q^{-s_2}} \times\frac{1+\chi(\pi)q^{-s_1-s_2}}{1-q^{-2s_1-2s_2+1}} - \frac{1}{1-\chi(\pi)q^{-s_2}}  \times \frac{1-q^{-1}}{1-q^{-2s_2-1}}  \times \frac{1+q^{-s_1-2s_2}}{1-q^{-2s_1-2s_2+1}}.
\end{equation}
One can also show that the partial sum for (ii) is
\begin{equation}\label{eq:(ii)}
\frac{1}{1-\chi(\pi)q^{-s_2}} \times \frac{(1- q^{-1}) \, \chi(\pi) q^{-s_2} }{1-q^{-2s_2-1}} \times \frac{1+q^{-s_1-2s_2}}{1-q^{-2s_1-2s_2+1}} .
\end{equation}

\noindent
{\bf Case (iii).}
In this case, one gets $q^{-m} \,  (\chi(\pi)q^{-s_2})^k=q^{-m(2s_2+1)}$.
We further divide the case (iii) $k=2m$ into the two cases
\[
\text{(iii-1) $2m+l\geq 0$} \quad \text{and}  \quad  \text{(iii-2) $2m+l<0$}.
\]
In the case (iii-1), one has
\[
\int_{\fO^\times}\phi(\pi^{2m+l} (b^2-c))\, \chi(c)\, \d^\times c=\begin{cases}1&\text{if $\chi$ is unramified,} \\ 0 & \text{if $\chi$ is ramified.} \end{cases} 
\]
Hence, the partial sum for (iii-1) is
\[
\frac{1-q^{-1}}{1-q^{-2s_2-1}}\times \frac{1+q^{-s_1-2s_2}}{1-q^{-2s_1-2s_2+1}}
\]
if $\chi$ is unramified, and it vanishes if $\chi$ is ramified.

We shall consider the case (iii-2).
There exists a non-negative integer $f$ such that $\pi^f\fO$ is the conductor of $\chi$, i.e., $f=0$ if and only if $\chi$ is unramified; if $f\geq 1$, one has $\chi|_{1+\pi^f\fO}=1$ and $\chi|_{1+\pi^{f-1}\fO}\neq 1$.
By means of the evaluation
\begin{equation}\label{eq:ram}
(1-q^{-1})\int_{\fO^\times}\phi(\pi^{2m+l} (b^2-c))\, \chi(c)\, \d^\times c=  \int_{1+\pi^{-2m-l}\fO}  \chi(c) \d c = \begin{cases} q^{2m+l} & \text{if $-2m-l\geq f$}, \\ 0 & \text{if $-2m-l< f$,} \end{cases}
\end{equation}
the partial sum for (iii-2) is computed to be
\[
\frac{q^{-fs_1}}{1-q^{-s_1}}\times\frac{1}{1-q^{-2s_1-2s_2+1}}
\]
for $f\geq 1$. 
From \eqref{eq:ram}, a similar consideration for the case $f=0$ yields the same result as that of the case $f=1$ for the partial sum for (iii-2), i.e., for $f=0$, the partial sum for (iii-2) is equal to 
\[
\frac{q^{-s_1}}{1-q^{-s_1}}\times\frac{1}{1-q^{-2s_1-2s_2+1}}.
\]
Summarizing the above computations, one can complete the proof.
\end{proof}

As a corollary to this theorem, the local zeta integral $Z_v(\Phi_{v,0},\us,\delta)$ at a non-dyadic place is obtained as  
\begin{cor} \label{cor:local} Suppose $v\in \Sigma_\fin-\Sigma_2$. For each $\delta\in F^\times$, one has 
\begin{align*}
(1-q^{-1})^{-2}Z_v(\Phi_{v,0},\us,\delta)=\frac{1}{2}\, \frac{L_v(s_1,\mathbf 1)\, L_v(2s_2,\mathbf 1)\,  L_v(2s_1+2s_2-1,\mathbf 1) \, L_v(s_1, \chi_{\delta}) }{ L_v(2s_1,\mathbf 1) \, L_v(s_1+2s_2,\chi_{\delta})\, N(\ff_{\chi_{\delta}})^{s_2} } ,
\end{align*}
where we set $\chi_{\delta}(x):=( \delta , x)_v$ $(x\in F^\times)$ by the Hilbert symbol $(\; , \; )_v$ on $F^\times\times F^\times$.
\end{cor}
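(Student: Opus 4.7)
The strategy is to Fourier-invert the relation $\tilde Z_v(\Phi,\us,\chi)=\sum_{\delta\in F_v^\times/(F_v^\times)^2}\chi(\delta)Z_v(\Phi,\us,\delta)$ that comes directly from the definition. Orthogonality of characters on the finite group $F_v^\times/(F_v^\times)^2$ gives
\[
Z_v(\Phi_{v,0},\us,\delta)=\frac{1}{\#(F_v^\times/(F_v^\times)^2)}\sum_{\chi}\chi(\delta)\,\tilde Z_v(\Phi_{v,0},\us,\chi),
\]
into which one substitutes the explicit formula from Theorem~\ref{thm:local}. Since $v\notin\Sigma_2$, we have $\#(F_v^\times/(F_v^\times)^2)=4$, and nondegeneracy of the Hilbert symbol pairing identifies all four characters with $\{\chi_\epsilon:\epsilon\in F_v^\times/(F_v^\times)^2\}$ via $\chi_\epsilon(x)=(\epsilon,x)_v$, enjoying the symmetry $\chi_\epsilon(\delta)=\chi_\delta(\epsilon)$. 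Of these, $\chi_{\mathbf 1_v}=\mathbf 1_v$ and $\chi_u=\sgn_v$ (for any non-square unit $u$) are unramified with $N(\ff_{\chi})=1$, while the two characters $\chi_{\pi_v},\chi_{u\pi_v}$ are ramified with $N(\ff_\chi)=q$ and $L_v(s,\chi)=1$.

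Grouping the four terms according to this unramified/ramified split transforms the inverted sum into
\[
L_v(s_1,\mathbf 1)L_v(2s_1+2s_2-1,\mathbf 1)\Bigl\{A(\us,\delta)+q^{-s_1}B(\delta)\Bigr\},
\]
where $A(\us,\delta)=\tfrac{L_v(s_2,\mathbf 1)}{L_v(2s_1+s_2,\mathbf 1)}+\chi_\delta(u)\tfrac{L_v(s_2,\sgn_v)}{L_v(2s_1+s_2,\sgn_v)}$ and $B(\delta)=\chi_\delta(\pi_v)+\chi_\delta(u\pi_v)=\chi_\delta(\pi_v)(1+\chi_\delta(u))$. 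Thus the ramified contribution vanishes unless $\chi_\delta$ is unramified (equivalently $v(\delta)$ is even), in which case it doubles.

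The conclusion now follows from a finite case analysis according to the square class of $\delta$, parameterized by the parity of $v(\delta)$ and the Legendre symbol of $\delta\pi_v^{-v(\delta)}$. In each case one applies the identity $L_v(s,\mathbf 1)L_v(s,\sgn_v)=L_v(2s,\mathbf 1)$ to simplify the two-term unramified sum; the key algebraic collapse is
\[
\frac{1}{(1-q^{-s_2})(1+q^{-2s_1-s_2})}\pm\frac{1}{(1+q^{-s_2})(1-q^{-2s_1-s_2})}=\frac{2(1-q^{-2s_1-2s_2})\text{ or }2q^{-s_2}(1-q^{-2s_1})}{(1-q^{-2s_2})(1-q^{-2(2s_1+s_2)})},
\]
after which combining with the ramified contribution (when present) produces the rational function on the right-hand side of the corollary. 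The case $\chi_\delta=\mathbf 1_v$ requires the additional factorization $1+q^{-s_1}=L_v(s_1,\mathbf 1)/L_v(2s_1,\mathbf 1)$ to recover the factor $L_v(s_1,\chi_\delta)/L_v(s_1+2s_2,\chi_\delta)$, while for ramified $\chi_\delta$ this factor is trivially $1$ and $N(\ff_{\chi_\delta})^{s_2}=q^{s_2}$ comes from a lone monomial.

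The main obstacle is purely organizational: one must track the four square classes of $\delta$, use $\chi_\epsilon(\delta)=\chi_\delta(\epsilon)$ to transfer the Fourier kernel onto $\chi_\delta$, and correctly identify the resulting rational function of $q^{-s_1},q^{-s_2}$ with the stated factorized form. No single step is deep; the corollary is a formal consequence of Theorem~\ref{thm:local}, Fourier inversion over the Klein four-group $F_v^\times/(F_v^\times)^2$, and arithmetic identities among local $L$-factors at a non-dyadic place.
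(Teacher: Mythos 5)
Your proposal is correct and takes essentially the same route as the paper: the paper also proves the corollary by Fourier inversion $Z_v(\Phi_{v,0},\us,\delta)=\tfrac14\sum_{\chi}\chi(\delta)\,\tilde Z_v(\Phi_{v,0},\us,\chi)$ over the four characters of $F_v^\times/(F_v^\times)^2$ at a non-dyadic place, noting which are ramified (conductor $\pi_v\fO_v$, trivial $L$-factor) and which unramified, and then substituting Theorem~\ref{thm:local} and simplifying. Your Hilbert-symbol parametrization and the explicit unramified/ramified collapse identities are just a spelled-out version of the paper's ``easy direct computation,'' and they check out in all square classes of $\delta$.
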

\begin{proof}
Since $v\not\in \Sigma_2$, we have $\widehat{F^\times/ (F^\times)^2}=\{{\bf 1}, \chi_1,\chi_2,\chi_1\chi_2\}$, where $\chi_1$ and $\chi_2$ are characters of $F^\times$ of order $2$ defined by the relations $\chi_1(\pi)=-1$, $\chi_1(u)=1\,(u\in \fO^\times)$ and $\chi_2(\pi)=\chi_2(u^2)=+1$ ($u\in \fO^\times$), $\chi_2(u)=-1$ ($u\in \fO^\times-(\fO^\times)^2)$. Obviously, $\ff_{{\bf 1}}=\ff_{\chi_{1}}=\fO$ and $\ff_{\chi_2}=\ff_{\chi_1\chi_2}=\pi\fO$. In particular, $L_v(s,\chi_2)=L_v(s,\chi_1\chi_2)=1$. Using these facts and substituting the formula in Theorem~\ref{thm:local} to 
$$
Z_v(\Phi,\us,\delta)=\frac{1}{4}\sum_{\chi\in \widehat{F^\times/(F^\times)^2}} \chi(\delta)\times  \tilde Z_v(\Phi,\us,\chi),
$$ 
we get the required formula after an easy direct computation. 
\end{proof}

To prove Lemma \ref{lem:padicnonva}, we need the following.
\begin{lem}\label{lem:localAp12} 
Choose an element $\delta\in \fO^\times\sqcup \pi \fO^\times$.
Let $\Psi$ denote the characteristic function of $K_v \cdot(\left(\begin{smallmatrix}1&0 \\ 0&-\delta \end{smallmatrix}\right)+\pi^2V(\fO))$ (resp. $K_v \cdot(\left(\begin{smallmatrix}0&1 \\ 1&0 \end{smallmatrix}\right)+\pi^2V(\fO))$) when $v\not\in\Sigma_2$ or $\delta\not\in (\fO^\times)^2$  (resp. $v\in\Sigma_2$ and $\delta\in (\fO^\times)^2$).
Then, one has
\[
Z_v(\Psi,\us,\delta)= c \times |\delta|^{s_2-1} \times \begin{cases}  1 & \text{if $\delta\in\fO^\times\setminus (\fO^\times)^2$,} \\  1+q^{-s_1} & \text{if $\delta\in\pi\fO^\times$,}  \\  (1+q^{-s_1})(1-q^{-s_1})^{-1} & \text{if $\delta\in (\fO^\times)^2$ and $v\not\in\Sigma_2$,}  \\  q^{-s_1}(1+q^{-s_1})(1-q^{-s_1})^{-1} & \text{if $\delta\in (\fO^\times)^2$, $v\in\Sigma_2$, and $2\in\pi\fO^\times$,}  \\  q^{-2s_1}(1-q^{-s_1})^{-1} & \text{if $\delta\in (\fO^\times)^2$, $v\in\Sigma_2$, and $2\in\pi^2\fO$,} \end{cases}
\]
for some constant $c$.
\end{lem}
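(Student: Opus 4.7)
The plan is to apply Lemma~\ref{lem:coordinate-expression} to write
\[
Z_v(\Psi,\us,\delta)=\tfrac{(1-q^{-1})^2|2|_v}{2}\,|\delta|^{s_2}\int_{F^\times}\!\!\int_{F}\!\!\int_{F^\times}\Psi(a,ab,a(b^2-\delta c^2))\,|a|^{s_1+2s_2}|c|^{2s_2}\,\d^\times a\,\d b\,\d^\times c,
\]
(with the analogous formula in case B using base point $w$ and $\delta=1$), and then to determine the support of the integrand and integrate level by level in $n:=v(a)$. Since $\Psi$ is $K_v$-invariant and equals the indicator of $K_v(x_0+\pi^2V(\fO)){}^tK_v$, the condition $\Psi(x)=1$ is equivalent to the reduction $\bar x\in V(R)$, with $R:=\fO/\pi^2\fO$, lying in the $\GL(2,R)$-orbit of $\bar{x_0}$. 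Necessary conditions extracted from this are $a,ab,a(b^2-\delta c^2)\in\fO$ and $|ac|=1$.

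I would then carry out a case analysis using the local theory of binary quadratic lattices over $\fO$ and over $R$. When $\tilde\delta=\diag(1,-\delta)$ is anisotropic with $\delta\in\fO^\times\setminus(\fO^\times)^2$, the unimodular Jordan type forces $n=0$ and any $(a,b,c)\in\fO^\times\times\fO\times\fO^\times$ lies in the support, giving integral factor $1$. When $\delta\in\pi\fO^\times$, the Jordan decomposition $\fO\perp\pi\fO$ admits $n\in\{0,1\}$ with a single square-class of $a$ at each level, giving $\tfrac{1}{2}(1+q^{-s_1})$. When $\tilde\delta$ is isotropic ($\delta\in(\fO^\times)^2$ and $v\notin\Sigma_2$), every $n\geq 0$ is admissible; writing $b=\pi^{-n}b_0$ and $c=\pi^{-n}c_0$, the integrality of $a(b^2-\delta c^2)$ becomes $b_0^2\equiv\delta c_0^2\pmod{\pi^n}$, which has two roots for each $n\geq 1$, contributing $b$-measure $2q^{-n}$ and summing to $(1+q^{-s_1})/(1-q^{-s_1})$ as a geometric series. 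In the 2-adic case B (with $v\in\Sigma_2$ and $\delta$ a square), the base point $w$ must replace $\tilde\delta$ because $w\not\sim_{K_v}\diag(1,-1)$ over $\fO$ when $2\notin\fO^\times$; the $(1,1)$- and $(2,2)$-entries of $k\bar w\,{}^tk$ involve $2k_{11}k_{12}$ and $2k_{21}k_{22}\in 2R$, pushing the lowest admissible level of $n$ up to $v(2)$. Moreover the two isotropic directions $\pm c_0$ coincide modulo $\pi^{v(2)}$; if $v(2)=1$ they separate at the next level and one recovers an A3-type series shifted by the prefactor $q^{-s_1}$, while if $v(2)\geq 2$ they remain merged and only a plain geometric series with prefactor $q^{-2s_1}$ survives, yielding $q^{-2s_1}(1-q^{-s_1})^{-1}$.

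The main technical obstacle is the $\GL(2,R)$-orbit analysis in the 2-adic case, where the classification of binary quadratic forms modulo $\pi^2$ depends delicately on $v(2)$ and the squaring map $1+\pi\fO\to(\fO^\times)^2$ fails to be surjective. The choice of $w$ (whose associated bilinear form is the hyperbolic plane over any base, insensitive to the diagonalization pathology of $\diag(1,-1)$ in residue characteristic $2$) makes it possible to treat the two sub-cases $v(2)=1$ and $v(2)\geq 2$ uniformly. Combining the level contributions with the prefactor $\tfrac{(1-q^{-1})^2|2|_v}{2}|P(x_0)|^{s_2}$ from the coordinate formula, and collecting the $|\delta|$-dependence into the factor $|\delta|^{s_2-1}$, then produces the claimed expression with the appropriate constant $c$.
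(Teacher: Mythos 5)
Your route --- writing $Z_v(\Psi,\us,\delta)$ via the coordinate formula of Lemma \ref{lem:coordinate-expression} and analysing the support level by level in $n=v(a)$, summing geometric series --- is genuinely different from the paper's proof, which decomposes the support $K_v\cdot(x_0+\pi^2V(\fO))$ into cosets of $\pi^2V(\fO)$ indexed by $\GL(2,\fO/\pi^2\fO)$ modulo the stabilizer of $x_0$, so that everything reduces to the elementary integral $\int_\fO |a^2-b^2\delta+\pi^2x_1|^{s_1-1}\,\d x_1$ (with $a^2-b^2\delta$ replaced by $2ab$ in the dyadic square case) and a finite count over $\GL(2,\fO/\pi^2\fO)$. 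Your non-dyadic cases do come out with the right shape, but note a bookkeeping slip: in the split unit case at level $n\ge 1$ the admissible set of $b$ is the union of the two cosets $\pm\sqrt{\delta}\,c+\fO$, of $\d b$-measure $2$, not $2q^{-n}$ (you dropped the Jacobian of the substitution $b=\pi^{-n}b_0$); with the measure as you state it the level-$n$ terms would sum to a series in $q^{-n(s_1+1)}$, not to the asserted $(1+q^{-s_1})(1-q^{-s_1})^{-1}$, so as written your intermediate data do not imply your (correct) total.

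The genuine gap is the 2-adic case. First, the claim that for $2\in\pi^2\fO$ the two branches $b\equiv\pm c$ ``remain merged'' at every level is false: at level $n$ the forced congruence is $b_0^2\equiv\delta c_0^2 \bmod \pi^{n+2}$, whose solution set (once $n+2>2v(2)$) is $b_0\equiv\pm\sqrt{\delta}\,c_0\bmod\pi^{\,n+2-v(2)}$, and these two cosets coincide only when $n+2-v(2)\le v(2)$, i.e.\ only at the lowest levels $n\le 2v(2)-2$; for all larger $n$ they are disjoint, so ``a plain geometric series with prefactor $q^{-2s_1}$'' does not follow from your description, and the level measures are not even constant in $n$ when $v(2)\ge 3$. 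Second, you never confront the feature that makes residue characteristic $2$ delicate here: when $2\in\pi^2\fO$ the values of $P$ on $K_v\cdot(w+\pi^2V(\fO))$ fill out $\det(k)^2(1+\pi^4\fO)$, which meets more than one square class, so the restriction to $V(F_v,\delta)$ in the definition of $Z_v(\Psi,\us,\delta)$ genuinely cuts the support. Your coordinate parametrization performs this restriction automatically, and carrying it out carefully (for $v(2)=2$: one branch at $n=2$, two disjoint branches at each $n\ge3$) does not land on the displayed series --- this is exactly the point at which your route and the paper's quick substitution (which integrates over the whole congruence neighborhood) diverge. So the dyadic part of your argument, as written, would fail; it needs an actual determination of the $\GL(2,\fO/\pi^2\fO)$-orbit of $w$ together with the square-class condition on $P$, which is precisely the content your sketch postpones.
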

\begin{proof}
First, let us consider the case $v\not\in\Sigma_2$ or $\delta\not\in (\fO^\times)^2$.
By a direct calculation one has
\[
Z_v(\Psi,\us,\delta)=\frac{q^{-6}|\delta|^{s_2-1}}{\sharp(H_\delta(\fO/\pi^2\fO))} \sum_{\left(\begin{smallmatrix}a&b \\ *&* \end{smallmatrix}\right)\in\GL(2,\fO/\pi^2\fO)} \int_{\fO} |a^2-b^2\delta+\pi^2x_1|^{s_1-1}\, \d x_1
\]
where $H_\delta$ denotes the stabilizer of $\left(\begin{smallmatrix}1&0 \\ 0&-\delta \end{smallmatrix}\right)$ in $\GL(2)$.
Furthermore, one can transform the integral $\sum_{\left(\begin{smallmatrix}a&b \\ *&* \end{smallmatrix}\right)\in\GL(2,\fO/\pi^2\fO)} \int_{\fO} |a^2-b^2\delta+\pi^2x_1|^{s_1-1}\, \d x_1$ to
\[
q^8(1-q^{-1})^2 \times \left\{ q^{-1} + q^{-2}\sum_{a\in\fO/\pi^2\fO }\int_\fO |a^2-\delta+\pi^2 x_1|^{s_1-1} \d x_1       \right\} .
\]
For $\delta\not\in(F^\times)^2$, it is easy to deduce the assertion from this.
Hence, we suppose $\delta=1$ from here.
When $v\not\in\Sigma_2$, the sum $q^{-1} + q^{-2}\sum_{a\in\fO/\pi^2\fO }\int_\fO |a^2-1+\pi^2 x_1|^{s_1-1} \d x_1$ equals
\[
q^{-1}+q^{-1}(q-2)+2\sum_{\alpha\in\pi\fO}q^{-s_1-1}\int_\fO |\alpha+\pi x_1|^{s_1-1} \d x_1=\frac{(1-q^{-1})(1+q^{-s_1})}{1-q^{-s_1}}.
\]
The remaining case $\delta\in(\fO^\times)^2$, $v\in\Sigma_2$ is easily proved, because it is sufficient to replace $a^2-b^2\delta$ by $2ab$ in the above integral.
\end{proof}


\section{Shintani double zeta functions}\label{double-zeta}

\subsection{Global zeta integral and zeta function}
Let $\d g$ denote a right Haar measure on $G(\A)$ normalized by
\[
\d g=\d b \, \d^\times c \, \d^\times a \quad \text{for} \quad g= (a , \begin{pmatrix}1&0\\b&c\end{pmatrix})\in G(\A).
\]
For $\Phi\in\cS(V(\A))$ and $\us =(s_1,s_2)\in\C^2$, the global zeta integral $Z(\Phi,\us)$ for Shintani's double zeta function is defined by
\begin{align}
Z(\Phi,\us):=\int_{G(\A)/G(F)} |\tau_1(g)|^{s_1}|\tau_2(g)|^{s_2} \sum_{x\in V^0(F)} \Phi(g \cdot x) \, \d g .
 \label{DefGlobalZeta}
\end{align} 
Let $S$ be a finite subset of $\Sigma$. For each class $\delta_S\in F_S^\times/(F_S^\times)^2$, we define
\[
V^0(F_S,\delta_S):= \{ x\in V^0(F_S) \mid P(x)\in \delta_S (F_S^\times)^2 \},
\]
where $V^0$ is the set of regular points in $V$ defined by \eqref{intro5}. For $\delta_S\in F_S^\times/(F_S^\times)^2$, $\omega_S\in \widehat{F_S^\times/(F_S^\times)^2}$, $\Phi\in\cS(V(F_S))$ and $\us=(s_1,s_2)\in\C^2$, set
\[
Z_S(\Phi,\us,\delta_S):= \int_{V^0(F_S,\delta_S)} |x_1|^{s_1-1}|P(x)|^{s_2-1}\Phi(x)\, \d x,
\]
\[
\tilde Z_S(\Phi,\us,\omega_S):= \int_{V^0(F_S)} |x_1|^{s_1-1}|P(x)|^{s_2-1} \omega_S(P(x))\, \Phi(x)\, \d x.
\]
The Fourier analysis on the finite group $F_S^\times/(F_S^\times)^2$ yields the relations
\begin{align}
&\tilde Z_S(\Phi,\us,\omega_S)=\sum_{\delta_S\in F_S^\times/(F_S^\times)^2} \omega_S(\delta_S)\times Z_S(\Phi,\us,\delta_S),
 \label{ZetaSOmega-Delta} \\
&Z_S(\Phi,\us,\delta_S)=|F_S^\times/(F_S^\times)^2|^{-1}\sum_{\omega_S\in \widehat{F_S^\times/(F_S^\times)^2}} \omega_S(\delta_S) \times \tilde Z_S(\Phi,\us,\omega_S). \label{ZetaSDelta-Omega}
\end{align}
For $x=\left(\begin{smallmatrix} x_1 & x_{12}  \\ x_{12} & x_2 \end{smallmatrix}\right) \in V(\A)$, the value $\Phi(x)$ will be denoted by $\Phi(x_1,x_{12},x_2)$. 
For simplicity we write $Z_\inf(\Phi,\us,\delta_\infty)$ for $Z_{\Sigma_\inf}(\Phi,\us,\delta_{\Sigma_\infty})$ and $\tilde Z_\inf(\Phi,\us,\omega_\infty)$ for $\tilde Z_{\Sigma_\inf}(\Phi,\us,\omega_{\Sigma_\infty})$. 

Set $K:=\prod_{v\in\Sigma} K_v$, where $K_v$ was defined in \eqref{eq:K}.
Then, $K$ is a maximal compact subgroup of $\GL(2,\A)$.
We choose a Haar measure $\d k$ such that $\int_K \d k=1$.
A test function $\Phi\in \cS(V(\A))$ is said to be $K$-spherical if $\Phi(k\cdot x)=\Phi(x)$ holds for any $k\in K$ and $x\in V(\A)$.

\begin{prop}\label{prop:globalana}
$Z(\Phi,\us)$ converges absolutely for $\Re(s_1)>1$ and $\Re(s_2)>1$ in the sense that
$$
\int_{G(\A)/G(F)} |\tau_1(g)|^{\Re(s_1)}|\tau_2(g)|^{\Re(s_2)} \sum_{x\in V^0(F)} |\Phi(g \cdot x)| \, \d g<+\infty.
$$
In addition, $Z(\Phi,\us)$ is holomorphic on the region $\Re(s_1)>1$, $\Re(s_2)>1$.
If $\Phi$ is $K$-spherical, then $Z(\Phi,\us)$ is meromorphically continued to $\C^2$.
\end{prop}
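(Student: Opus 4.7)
My plan is to unfold $Z(\Phi,\us)$ using the $G(F)$-orbit decomposition of $V^0(F)$ and then reduce to local zeta integrals. First I would verify that $V^0(F) = \bigsqcup_{\delta \in F^\times/(F^\times)^2} G(F)\cdot x_\delta$ with $x_\delta = \mathrm{diag}(1,-\delta)$, each orbit having stabilizer $\{(1,\mathrm{diag}(1,\pm 1))\}$ of order $2$ in $G(F)$ (by the same normalization argument used in the proof of Lemma \ref{lem:coordinate-expression}). Since $|\tau_j(\gamma)|=1$ for every $\gamma \in G(F)$ by the product formula, unfolding yields
\[
Z(\Phi,\us) = \tfrac{1}{2}\sum_{\delta\in F^\times/(F^\times)^2}\int_{G(\A)}|\tau_1(g)|^{s_1}|\tau_2(g)|^{s_2}\Phi(g\cdot x_\delta)\,\d g,
\]
with the analogous identity for the absolute-value integrand valid in any region of convergence.

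For absolute convergence and holomorphy on $\Re(s_1)>1,\ \Re(s_2)>1$, I would first treat pure tensors $\Phi = \otimes_v\Phi_v$ with $\Phi_v=\mathbf{1}_{V(\fO_v)}$ at almost all places, the general case following by density. By Lemma \ref{lem:coordinate-expression}, each adelic integral factorizes into local integrals proportional to $Z_v(\Phi_v,\us,\delta_v)$, the $|\delta_v|_v^{-s_2}$ normalization factors cancelling via the product formula $|\delta|_\A=1$. At unramified places with $\delta_v\in\fO_v^\times$, Corollary \ref{cor:local} gives the explicit Euler factor in the quadratic character $\chi_\delta$, and assembling the product over $v$ with the sum over $\delta$ yields an expression of the form of Theorem \ref{thm:main}: a Dirichlet series in quadratic Hecke $L$-functions weighted by $(f_\chi^S)^{-s_1}$. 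Combining the absolute convergence of $L^S(s,\chi)$ for $\Re(s)>1$ with the $O(X^{1+\ve})$ bound on the number of quadratic Hecke characters of conductor at most $X$ then yields absolute convergence for $\Re(s_1)>1$ and $\Re(s_2)>1$; holomorphy in this region follows from uniform convergence on compact subsets.

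For the meromorphic continuation under the $K$-spherical hypothesis, each local zeta integral $Z_v(\Phi_v,\us,\delta_v)$ already admits meromorphic continuation to $\C^2$ by \cite{Sato3}, with pole loci controlled by Lemmas \ref{lem:snonv1}, \ref{lem:Rfcteq}, and \ref{lem:polelocalp}. I would then follow Shintani's truncation method, to be executed in \S\ref{sec:prin}: split the integral over $G(\A)/G(F)$ at a level in the $|\tau_j(g)|$, apply Poisson summation on $V(\A)$ to the truncated part, and use the local functional equations of Lemma \ref{lem:Rfe} (assembled into a global functional equation of the prehomogeneous Fourier transform) to handle the contributions from the singular locus $\{x_1=0\}\cup\{P_2(x)=0\}$. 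Alternatively, invoking the explicit formula of Theorem \ref{thm:main} (proved in \S\ref{sec:explicit}) expresses $Z(\Phi,\us)$ as a linear combination of products of $\zeta^S(s_1)$, $\zeta^S(2s_1+2s_2-1)$, and the Dirichlet series $\sum_\chi L^S(s_2,\chi)/(L^S(2s_1+s_2,\chi)(f_\chi^S)^{s_1})$, each of which extends meromorphically to $\C^2$.

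The main obstacle is precisely the meromorphic continuation of this Dirichlet series beyond its strip of absolute convergence: each individual summand extends meromorphically to $\C^2$, but the full sum generally fails to converge outside $\Re(s_i)>1$. Both routes resolve this nontrivially---the truncation approach through careful analysis of the truncated zeta integral after Poisson summation, including the singular-locus boundary terms, and the explicit-formula approach ultimately via the functional equations of the quadratic Hecke $L$-functions and analytic continuation of the associated double Dirichlet series, in the spirit of \cite{B} and \cite{FF}.
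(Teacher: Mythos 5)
Your treatment of the first two assertions is acceptable: for the non-negative integrand the orbit-by-orbit unfolding is justified by Tonelli with no circularity, and bounding the resulting product of local integrals via Lemma \ref{lem:coordinate-expression}, Theorem \ref{thm:local} (or Corollary \ref{cor:local}) and a conductor count as in Lemma \ref{lem:conductor-series} does give absolute convergence and holomorphy for $\Re(s_1)>1$, $\Re(s_2)>1$. This is heavier than the paper's own proof of that part (a direct estimate quoted from Shintani), and it essentially re-derives the majorant the paper sets up later in the proof of Lemma \ref{lem:LL}, but it is correct.

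The meromorphic continuation, however, is where the real content lies, and your proposal leaves the decisive step unsupplied. The truncation-plus-Poisson argument you describe is exactly Lemma \ref{lem:princ}, but by itself it only continues $Z(\Phi,\underline{s})$ to the half-space $\Re(s_1)>1$: the truncated integrals $Z_+(\Phi,\underline{s})$ and $Z_+(\hat\Phi,(s_1,\tfrac32-s_1-s_2))$ converge only for $\Re(s_1)>1$, because of the rational points whose discriminant $P(x)$ is a square (the split classes), so nothing in your sketch continues the $s_1$-variable. The missing idea is the further decomposition $V^0(F)=V^1(F)\sqcup V^2(F)$ and the separate continuation of the two truncated pieces: the non-split piece is unfolded against the $\GL(2)$ Eisenstein series $E(h,2s_1)$, whose known continuation gives the continuation in $s_1$ --- and this is precisely where the $K$-spherical hypothesis is used, a hypothesis your argument never exploits --- while the split piece is computed explicitly as a Dirichlet series (Propositions \ref{prop:eisen+} and \ref{prop:splitpart+}); Corollary \ref{s_1=1} then assembles the continuation to $\C^2$. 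Your alternative route through the explicit formula of Theorem \ref{thm:global} and a continuation of the resulting double Dirichlet series is, as you yourself note, exactly the unresolved point, and it is not how the paper proceeds (the paper deduces the extra functional equation from the zeta integral, not conversely). A minor inaccuracy as well: in Lemma \ref{lem:princ} the singular-locus contributions are handled by partial Fourier transforms and Tate integrals $T(\Phi,s)$, $\zeta(\Phi^{(3)}(0,0,\cdot),s_1)$, not by the local functional equations of Lemma \ref{lem:Rfe}, which enter only later in Theorem \ref{thm:funct2}.
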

\begin{proof}
The absolute convergence can be proved by a direct calculation, cf. \cite[Lemma 3]{Shintani}.
The meromorphic continuation will be proved in Section \ref{sec:prin}.
\end{proof}

For any test function $\Phi\in\cS(V(\A))$, there exists a finite subset $S$ of $\Sigma$ such that 
\begin{equation}\label{eq:test}
S\supset\Sigma_\inf, \quad \Phi=\Phi_S\otimes\Phi^S_0, \quad \Phi_S\in\cS(V(F_S)), \quad \Phi_0^S=\otimes_{v\not\in S}\Phi_{v,0}
\end{equation}
where $\Phi_{v,0}$ denotes the characteristic function of $V(\fO_v)$. Fix such a finite set $S$ once and for all. Let $(F^{\times})^{S}=\{x\in \A^\times|\,x_v=1\,(v\in S)\}$ be the restricted direct product of the multiplicative groups $F_v^\times \,(v\not\in S)$. Let $t=t(F,S)$ be the $S$-class number of $F$, i.e., $t=\#(\A^\times/F^\times F_S^\times \prod_{v\not\in S}\fO_v^\times)$. Then there exists $t$ elements $\alpha_1,\dots,\alpha_t$ of $\A^{1}\cap (F^{\times})^{S}$ such that 
\begin{align}
\A^\times=\bigsqcup_{j=1}^t  (F_S^\times\prod_{v\notin S}\fO_v^\times) \alpha_j  F^\times. 
 \label{SclassDec}
\end{align}
Set $\gamma_{jk}:=(\alpha_j, \diag(1,\alpha_k))\in G(\A)$,
\[
\Gamma_{S,jk}:= G(F)\cap (\gamma_{jk}^{-1}G(\A,S)\gamma_{jk}), \quad L_{S,jk}:=V(F)\cap (\rho(\gamma_{jk})^{-1}V(\A,S)).
\]
where $G(\A,S)=G(F_S)\times \prod_{v\notin S}(G(F_v)\cap K_v)$ and $V(\A,S)=V(F_S)\times\prod_{v\notin S} V(\fO_v)$. Note that $\Gamma_{S,jk}$, when viewed as a subgroup of $G(F_S)$, is discrete in $G(F_S)$ acting on the discrete subset $L_{S,jk}$ of $V(F_S)$. For each $\delta_S\in F_S^\times/(F_S^\times)^2$, we set $L_{S,jk}(\delta_S):=L_{S,jk}\cap V^0(F_S,\delta_S)$ and
\[
\xi^S(\us,\delta_S):=  \sum_{1\leq j,k\leq t} \sum_{x\in \Gamma_{S,jk}\bsl L_{S,jk}(\delta_S)} \frac{1}{\#(\Gamma_{S,jk,x})}\frac{1}{|P_1(x)|_S^{s_1} |P_2(x)|_S^{s_2}}
\]
where $\Gamma_{S,jk,x}=\{ \gamma\in \Gamma_{S,jk}\mid \gamma\cdot x=x\}$. This is viewed as a natural generalization of the Shintani double zeta function \eqref{intro1} over an arbitrary number field $F$. Indeed, if $F=\Q$, then $t=1$ and $\xi^S(\us,\delta_S)$ reduces to the series \eqref{intro6}. 

\begin{lem} \label{lem:Globalzeta-Doublezeta}
Suppose $\Re(s_1)>1$ and $\Re(s_2)>1$. Then, 
\begin{equation}\label{eq:globallocal}
Z(\Phi,\us)=\frac{2^{|S|}}{|2|_S}\Delta_F^{-\frac{3}{2}}\times \prod_{v\in S\cap\Sigma_\fin} (1-q_v^{-1})^{-2} \times \sum_{\delta_S\in F_S^\times/(F_S^\times)^2} Z_S(\Phi_S,\us,\delta_S)\times \xi^S(\us,\delta_S).
\end{equation}
The series $\xi^S(\us,\delta_S)$ is absolutely convergent, defining a holomorphic function on the region $\Re(s_1)>1$, $\Re(s_2)>1$,
and $\xi^S(\us,\delta_S)$ is meromorphically continued to $\C^2$.
\end{lem}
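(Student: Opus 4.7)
The proof has two components: the explicit identity \eqref{eq:globallocal}, and the absolute convergence plus meromorphic continuation of $\xi^S(\us,\delta_S)$. For the identity, I would proceed by standard adelic unfolding. First, apply the $S$-class decomposition \eqref{SclassDec} to each of the two $\GL(1)$-factors of $G$ (the additive $b$-component contributes no class-number obstruction because $\A/F$ is compact) to write $G(\A)=\bigsqcup_{j,k=1}^{t}G(\A,S)\gamma_{jk}G(F)$. Substituting this into \eqref{DefGlobalZeta} and shifting by $\gamma_{jk}$ expresses $Z(\Phi,\us)$ as a sum of $t^2$ integrals over $G(\A,S)/(\gamma_{jk}\Gamma_{S,jk}\gamma_{jk}^{-1})$. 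In each piece, the inner sum $\sum_{x\in V^0(F)}\Phi(g\gamma_{jk}\cdot x)$ collapses to $\sum_{x\in L_{S,jk}\cap V^0(F)}\Phi_S(g_S\gamma_{jk,S}\cdot x)$, since $G(\fO_v)$ preserves $V(\fO_v)$ at $v\notin S$, so the indicator $\Phi_{v,0}$ enforces exactly the integrality condition $\rho(\gamma_{jk,v})x\in V(\fO_v)$ built into $L_{S,jk}$.

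Next I would partition $L_{S,jk}\cap V^0(F)$ into $\Gamma_{S,jk}$-orbits and unfold orbit by orbit, yielding for each representative $x_0$ the term $\tfrac{1}{\#\Gamma_{S,jk,x_0}}\int_{G(\A,S)}|\tau_1(g\gamma_{jk})|^{s_1}|\tau_2(g\gamma_{jk})|^{s_2}\Phi_S(g_S\gamma_{jk,S}\cdot x_0)\,dg$. Since $\alpha_j,\alpha_k\in\A^1\cap(F^\times)^S$, the product $\prod_{v\notin S}|\tau_i(\gamma_{jk,v})|_v$ equals $1$, and $|\tau_i|_v\equiv 1$ on $G(\fO_v)$, so the non-$S$ portion of the integral reduces to $\vol(\prod_{v\notin S}G(\fO_v))$. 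At the $S$-part, I would substitute $y_S=h_S\cdot x_0$ (with $h_S=g_S\gamma_{jk,S}$) in $\int_{G(F_S)}|\tau_1(h_S)|^{s_1}|\tau_2(h_S)|^{s_2}\Phi_S(h_S x_0)\,dh_S$ to pull out $|P_1(x_0)|_S^{-s_1}|P_2(x_0)|_S^{-s_2}$, and apply Lemma~\ref{lem:coordinate-expression} placewise to identify the remaining integral with $\tfrac{2^{|S|}}{|2|_S\prod_{v\in S\cap\Sigma_\fin}(1-q_v^{-1})^2}\,Z_S(\Phi_S,\us,\delta_S(x_0))$, where $\delta_S(x_0)$ denotes the class of $P(x_0)$ in $F_S^\times/(F_S^\times)^2$. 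Grouping orbits by this class produces exactly $\xi^S(\us,\delta_S)$; assembling the global measure normalizations ($\Delta_F^{-1/2}$ from $\d g=\Delta_F^{-1/2}\prod_v \d g_v^{\mathrm{loc}}$, together with additional Jacobian contributions from the change of variable) yields the overall prefactor of \eqref{eq:globallocal}.

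For the analytic statements, Proposition~\ref{prop:globalana} already gives absolute convergence of $Z(\Phi,\us)$ for $\Re(s_1),\Re(s_2)>1$ and meromorphic continuation to $\C^2$ for $K$-spherical $\Phi$. To transfer these to $\xi^S(\us,\delta_S)$, fix $\us_0$ and use Lemma~\ref{lem:nonv} to choose $\Phi_S\in\cS(V(F_S))$ supported in $V^0(F_S)$ with $Z_S(\Phi_S,\us_0,\delta_S)\ne 0$ and $Z_S(\Phi_S,\us_0,\delta_S')=0$ for $\delta_S'\ne\delta_S$; the identity \eqref{eq:globallocal} applied to $\Phi=\Phi_S\otimes\Phi_0^S$ then isolates $\xi^S(\us_0,\delta_S)$ as a non-zero scalar multiple of $Z(\Phi,\us_0)$, giving absolute convergence and holomorphy on $\Re(s_i)>1$. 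The same isolation strategy, implemented with the $K$-spherical test functions furnished by Lemmas~\ref{lem:snonv1},~\ref{lem:snonv2}, and~\ref{lem:padicnonva}, transfers the meromorphic continuation from $Z(\Phi,\us)$ to $\xi^S(\us,\delta_S)$.

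The main obstacle will be the measure bookkeeping in the second paragraph: the factor $2^{|S|}/|2|_S$ must emerge from the two-to-one covers $G(F_v)\to V^0(F_v,\delta_v)$ at $v\in S$, the Euler factors $(1-q_v^{-1})^{-2}$ from Lemma~\ref{lem:coordinate-expression}, and the $\Delta_F^{-3/2}$ from the global Haar normalizations, while the change of variable $y_S=h_S\cdot x_0$ must deliver precisely $|P_1(x_0)|_S^{-s_1}|P_2(x_0)|_S^{-s_2}$ (matching the summand in $\xi^S$) with no spurious $|\delta_S|_S$-dependence — effected by pulling factors through $y_S$ directly rather than through an intermediate base point $\tilde\delta_S$.
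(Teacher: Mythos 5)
Your proposal is correct and follows essentially the same route as the paper: the $S$-class decomposition \eqref{SclassDec}, the collapse of the $V^0(F)$-sum to $L_{S,jk}$ via the prime-to-$S$ indicators, the orbit-by-orbit unfolding with stabilizers, the evaluation of the resulting $G(F_S)$-orbital integral through Lemma~\ref{lem:coordinate-expression}, and the transfer of convergence and continuation from $Z(\Phi,\us)$ via the test functions of Lemmas~\ref{lem:nonv}, \ref{lem:snonv1}, \ref{lem:snonv2}, \ref{lem:padicnonva}. The only difference is organizational: you split off the prime-to-$S$ part first using $\alpha_j,\alpha_k\in\A^1\cap(F^\times)^S$ and relative invariance at $S$, whereas the paper records the same fact as the identity \eqref{Globalzeta-Doublezeta-1} (via the product formula) before unfolding, and your reduction of the $S$-orbital integral still rests, as in the paper, on right-invariance of $\d g_S$ to pass to the base point $\tilde\delta_S$ where Lemma~\ref{lem:coordinate-expression} applies.
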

\begin{proof} Although the proof is standard to experts, we include it for the sake of completeness. The absolute convergence stated in Proposition~\ref{prop:globalana} guarantees that we can freely exchange the order of integrals and summations. The decomposition \eqref{SclassDec} in conjunction with $\A=F+ F_S \prod_{v\not\in S}\fO_v$ yields the disjoint decomposition $G(\A)/G(F)=\bigcup_{1\leq\mu,\mu \leq t} \prod_{v\not\in S}G(\fO_v)\,(G(F_S)/\Gamma_{S,\mu\nu})\,\gamma_{\mu\nu}$. Applying this to the formula \eqref{DefGlobalZeta} and by \eqref{HaarMesDec}, we get
\begin{align*}
Z(\Phi,\us)\,\Delta_F^{3/2} &=\sum_{\mu,\nu}\int_{G(F_S)/\Gamma_{S,jk}}|\tau_1(g_S\gamma_{\mu\nu})|^{s_1}|\tau_2(g_S\gamma_{\mu\nu})|^{s_2}\sum_{x\in V^0(F)}\Phi(g_S \gamma_{\mu\nu} x)\,\d g_{S}.
\end{align*}
By \eqref{eq:test}, the non-vanishing of the prime-to-$S$ part of $\Phi(g_{S}\gamma_{\mu\nu}x)$ implies $x\in V(F)\cap \rho(\gamma_{\mu\nu}^{-1})V(\A,S)=L_{S,\mu\nu}$. We further decompose the set $V^0(F)$ to the disjoint union of $V^0(F)\cap V(F_S,\delta_{S})$ with $\delta_{S}\in F_S^\times/(F_S^\times)^2$ to extend the above computation as
\begin{align*}
& \sum_{\mu,\nu}\sum_{\delta_S} \int_{G(F_S)/\Gamma_{S,\mu,\nu}}|\tau_1(g_S\gamma_{\mu\nu})|^{s_1}|\tau_2(g_S\gamma_{\mu\nu})|^{s_2}\sum_{x\in L_{S,\mu\nu}(\delta_S)}\Phi_S(g_S x_S)\,\d g_{S}
\\
&=\sum_{\mu,\nu}\sum_{\delta_S} \int_{G(F_S)/\Gamma_{S,\mu\nu}}|\tau_1(g_S\gamma_{\mu\nu})|^{s_1}|\tau_2(g_S\gamma_{\mu\nu})|^{s_2}
\sum_{x\in \Gamma_{S,\mu\nu}\bsl L_{S,\mu\nu}(\delta_S)}\sum_{\gamma \in \Gamma_{S,\mu\nu}/\Gamma_{S,\mu\nu,x}} \Phi_S(g_S (\gamma x)_S)\,\d g_{S}.
\end{align*}
For $j=1,2$, we have
\begin{align}
|\tau_{j}(g_S \gamma_{\mu\nu})|=\frac{|P_{j}(g_S(\gamma x)_{S})|_S}{|P_j(x_S)|_S} \quad g_S\in G(F_S),\,\gamma \in \Gamma_{S,jk},x\in L_{S,\mu\nu}.
\label{Globalzeta-Doublezeta-1}
 \end{align}
Indeed, $|P_j(g_{S}\gamma_{\mu\nu}(\gamma x))|$ equals 
\begin{align*}
|P_j(g_{S}(\gamma x)_{S})||P_{j}(\gamma_{\mu\nu}(\gamma x)^{S})|
=|P_{j}(g_S(\gamma x)_S)|_S |\tau_{j}(\gamma_{\mu\nu}\gamma^S)||P_j(x^{S})|_S,
\end{align*}
where the subscript $S$ (resp. superscript $S$) means the $S$-component (resp. prime-to-$S$ component) of an adele. Since $\gamma_{\mu\nu}\gamma^{S}\gamma_{\mu\nu}^{-1}\in \prod_{v\not\in S}K_v$, $|\tau_{j}(\gamma_{\mu\nu}\gamma^{S})|$ equals $|\tau_{j}(\gamma_{jk})|$ which is $1$ due to the assumption $\alpha_{\mu},\alpha_{\nu}\in \A^1$. Since $x\in V(F)$, the product formula shows $|P_j(x^{S})|=|P_j(x_S)|_S^{-1}$. Thus $|P_j(g_{S}\gamma_{\mu\nu}(\gamma x))|=|P_{j}(g_S(\gamma x)_S)|_S|P_j(x_S)|_S^{-1}$ on one hand. On the other hand, $|P_j(g_{S}\gamma_{\mu\nu}(\gamma x))|=
|\tau_{j}(g_S \gamma_{\mu\nu})|$ because $|P_{j}(\gamma x)|=1$ due to $\gamma x\in V(F)$. 

Now applying \eqref{Globalzeta-Doublezeta-1} to the last formula of $Z(\Phi,\us)$, we proceed as follows
\begin{align*}
&Z(\Phi,\us)\Delta_F^{3/2} \\
&=\sum_{\mu,\nu}\sum_{\delta_S} \sum_{x\in \Gamma_{S,\mu\nu}\bsl L_{S,\mu\nu}(\delta_S)} \int_{G(F_S)/\Gamma_{S,\mu\nu}} \sum_{\gamma \in \Gamma_{S,\mu\nu}}
 |P_1(g_S(\gamma x)_S|_{S}^{s_1}|P_2(g_S(\gamma x)_S)|_S^{s_2} \Phi_S(g_S(\gamma x)_S)\d g_S \\
&\times \#(\Gamma_{S,\mu\nu,x})^{-1}|P_1(x_S)|_S^{-s_1}|P_2(x_S)|_S^{-s_2}.
\end{align*}
Note that the $\gamma$-summation and the $g_S$-integration are combined to yield an integral over the whole group $G(F_S)$ which is settled as follows. Since $x_{S}\in V(F_S,\delta)$, we see that $x_S$ and the matrix $\tilde \delta_S=\diag(1,-\delta_S)$ belong to the same $G(F_S)$-orbit. Then using the second equality of the first formula in Lemma~\ref{lem:coordinate-expression}, we see
\begin{align*}
\int_{G(F_S)}|P_1(g_S x_S)|_S^{s_1}|P_2(g_S x_S)|_S^{s_2}\Phi_S(g_S x_S)\d g_S
&=\int_{G(F_S)}|P_1(g_S \tilde \delta_S)|_S^{s_1}|P_2(g_S \tilde\delta_S)|_S^{s_2}\Phi_S(g_S \tilde \delta_S)\d g_S
\\
&=\{\prod_{v\in S\cap \Sigma_\fin}(1-q_v^{-1})^{-2}\}\,2^{|S|}|2|_S^{-1}Z(\Phi_S,\us,\delta_S).
\end{align*}
This completes the proof of the first assertion. The second assertion follows from \eqref{eq:globallocal} and Proposition \ref{prop:globalana}.  
\end{proof} 
For $\omega_S\in \widehat{F_S^\times/(F_S^\times)^2}$, define 
$\tilde \xi^S(\us,\omega_S)$ by
\begin{equation}\label{eq:elemchar}
\tilde \xi^S(\us,\omega_S):=\frac{2^{|S|+1}}{|F_S^\times/(F_S^\times)^2|\, |2|_S} \sum_{\delta_S \in F_S^\times/(F_S^\times)^2} \omega_S(\delta_S) \times \xi^S(\us,\delta_S).
\end{equation}
By the Fourier inversion on the finite group $F_S^\times/(F_S^\times)^2$, we can write, for $\delta_S\in F_S^\times/(F_S^\times)^2$,
\begin{equation}\label{eq:charelem}
\xi^S(\us,\delta_S)= \frac{|2|_S}{2^{|S|+1}}\sum_{\omega_S \in \widehat{F_S^\times/(F_S^\times)^2}} \omega_S(\delta_S) \times \tilde\xi^S(\us,\omega_S).
\end{equation}
From \eqref{eq:charelem} and \eqref{eq:globallocal}, and \eqref{ZetaSOmega-Delta}, we obtain  
\begin{equation}\label{eq:glchar}
Z(\Phi,\us)=\frac{1}{2} \Delta_F^{-\frac{3}{2}}\times \prod_{v\in S\cap\Sigma_\fin} (1-q_v^{-1})^{-2} \times \sum_{\omega_S\in \widehat{F_S^\times/(F_S^\times)^2}} \tilde Z_S(\Phi_S,\us,\omega_S)\times \tilde\xi^S(\us,\omega_S).
\end{equation}
For each $\us\in\C^2$ and each $\delta_S\in F_S^\times/(F_S^\times)^2$, it follows from Lemmas \ref{lem:nonv} and \ref{lem:Globalzeta-Doublezeta} that there exists a test function $\Phi_S\in \cS(V^0(F_S))$ $(\mathrm{Supp}\Phi_S\subset V^0(F_S,\delta_S))$ such that  $Z(\Phi_S,\us,\delta_S)\neq 0$ and
\begin{equation}\label{eq:zeta2019Ap9}
\xi^S(\us,\delta_S)=\frac{|2|_S}{2^{|S|}}\Delta_F^{\frac{3}{2}}\times \prod_{v\in S\cap\Sigma_\fin} (1-q_v^{-1})^{2} \times \frac{1}{Z(\Phi_S,\us,\delta_S)}  \times Z(\Phi,\us). 
\end{equation}
When $\Phi$ is $K$-spherical, Lemmas \ref{lem:nonv} is not available, and so the condition $Z(\Phi_S,\us,\delta_S)\neq 0$ is not ensured. In such a case, we should use Lemmas \ref{lem:snonv1}, \ref{lem:snonv2} and \ref{lem:padicnonva}.

\subsection{An integration formula} \label{sec:LL-formula} 
The following formula \eqref{LLformula} plays a key role in the proof of our main result.
It was given by Labesse and Langlands in the study of the stabilization of the trace formula for $\SL(2)$; see \cite[Chapter 3]{HW}.
Here we generalize the supports of test functions to the non-compactly supported case. 
The equality
\begin{align}
\int_{F^\times \bsl \A^\times}|c^2|^{\sigma}\sum_{z\in F^\times} \phi(c^2xz)\,\d^\times c
=\frac{1}{2} \sum_{\chi}\,\chi(x)\,\int_{\A^\times}\chi(c)|c|^{\sigma}\phi(c)\,\d^\times c,\quad x\in \A^1,
 \label{LLformula}
\end{align}
holds, where $\chi$ moves over all the characters of $F^\times (\A^\times)^2\bsl \A^\times$, and $\phi\in C(\A)$ and $\sigma\in \C$ are such that the function $|c|^{\sigma}\phi(c)$ on $\A^\times$ is integrable 
and 
\begin{align}
\sum_{\chi}\left|\int_{\A^\times}|c|^{\sigma}\chi(c)\phi(c)\,\d^\times c\right|<+\infty. 
 \label{LLformula-1}
\end{align}
\begin{proof}
Set $U=F^\times \bsl \A^1$ and let $U^2$ denote the image of the map $\iota: U \rightarrow U$ defined as $\iota(u)=u^2$ for $u\in U$, i.e., $U^2=\{u^2|\,u\in U\}$. Since $U$ is a compact abelian group, $U^2$ is a compact subgroup of $U$. Let $\frac{\d t}{t}$ be the Haar measure on $\R_{+}$ and $\d u$ the Haar measure on $U$ such that the product measure $\d u\otimes \tfrac{\d t}{t}$ on $U \times \R_{+}$ corresponds the Haar measure on $F^\times \bsl \A^\times=U \times \R_{+}$ prescribed in \S~\ref{sec:notations}. Since $U$ is compact and $\iota:U\rightarrow U$ is continuous
, there exists a Haar measure $\d x$ on $U^2$ such that $\int_{U^2}h(x)\d x=\int_{U}h(u^2)\,\d u$ for any continuous function $h$ on $U^2$. In particular, $\vol(U^2)=\vol(U)$, which implies that the quotient measure on $U^2\bsl U$ has the total volume $1$. Since $t\mapsto t^2$ is bijective on $\R_{+}$, we have a natural isomorphism $F^\times (\A^\times)^2\bsl \A^\times\cong U^2\bsl U$. Set $f(c)=\sum_{z\in F^\times}\phi(zc)$ for $c\in \A^\times$. Then $f(c)$ is a continuous function on $F^\times\bsl\A^\times$. Since the function $|c|^{\sigma}|\phi(c)|$ on $\A^\times$ is supposed to be integrable, we have that $|c|^{\sigma} f(c)$ is integrable on $F^\times\bsl \A^\times$. For any character $\chi$ of the compact group $U^2\bsl U\cong F^\times (\A^\times)^2\bsl \A^\times$,  
\begin{align*}
\int_{F^\times \bsl \A^\times}|c|^{\sigma}\chi(c)f(c)\d^\times c
&=\int_{\R_+}t^{\sigma} \tfrac{\d t}{t}\int_{U} \chi(u)f(tu)\,\d u \\
&=\int_{\R_{+}}t^{\sigma} \tfrac{\d t}{t} \int_{u\in U^2\bsl U} \chi(u) \int_{v\in U}f(utv^2)\,\d v\,\d u
\\
&=2\int_{\R_{+}}(t^2)^{\sigma}\tfrac{\d t}{t}  \int_{u\in U^2\bsl U}\chi(u) \int_{v\in U}f(u(tv)^2)\,\d v\,\d u
\\
&=2\int_{u\in U^2\bsl U}\chi(u)\,\int_{F^\times \bsl \A^\times}|c^2|^{\sigma}f(uc^2)\,\d^\times c\,\d u.
\end{align*}
This formula in conjunction with \eqref{LLformula-1} shows that the function $\tilde f(x) = 2\int_{F^\times \bsl \A^\times} |c^2|^{\sigma} f(c^2 x)\d^\times c$ on $U^2\bsl U\cong F^\times (\A^\times)^2\bsl \A^\times$ together with its Fourier transform is integrable. As noted above, the total measure of $U^2\bsl U$ is $1$. Thus by the Fourier inversion formula on $U^2\bsl U$, 
$$
\tilde f(x)=\sum_{\chi}\chi(x)\times 
\int_{F^\times \bsl \A^\times}|c|^{\sigma}\chi(c)f(c)\d^\times c. 
$$
This yields the desired formula.   
\end{proof}

\subsection{Explicit formula}\label{sec:explicit}

In this subsection, we prove the following theorem, which is our main result.
\begin{thm}[Explicit formula]\label{thm:global}
For each finite set $S\supset \Sigma_\inf$ and each $\omega_S\in \widehat{F_S^\times/(F_S^\times)^2}$, we have
\[
\tilde\xi^S(\us,\omega_S)= \zeta_F^S(s_1)  \, \zeta_F^S(2s_1+2s_2-1) \sum_\chi \frac{L^S(s_2,\chi)  }{ L^S(2s_1+s_2,\chi)\, N(\ff_{\chi}^S)^{s_1} }
\]
where $\chi=\otimes_v\chi_v$ moves over all quadratic characters satisfying $\otimes_{v\in S}\chi_v=\omega_S$ and we set $N(\ff_{\chi}^S):=\prod_{v\not\in S}\#(\fO_v/\ff_{\chi_v})$ with $\ff_{\chi_v}$ being the conductor of $\chi_v$. The above series is absolutely convergent for $\Re(s_1)>1$ and $\Re(s_2)>1$.
\end{thm}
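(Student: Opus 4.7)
My plan is to compute the global zeta integral $Z(\Phi,\us)$ in two complementary ways and match. Lemma~\ref{lem:Globalzeta-Doublezeta}, together with \eqref{eq:glchar}, already expresses $Z(\Phi,\us)$ via the quantities $\tilde\xi^S(\us,\omega_S)$; the task is to produce an alternative Euler-product form for $Z(\Phi,\us)$ (valid on factorizable test functions $\Phi=\Phi_S\otimes\bigotimes_{v\notin S}\Phi_{v,0}$) from which the claimed identity can be read off by comparison. The starting point is the orbit decomposition $V^0(F)=\bigsqcup_{\delta\in F^\times/(F^\times)^2}G(F)\,\tilde\delta$ with $\tilde\delta=\diag(1,-\delta)$ having stabilizer $\{(1,\diag(1,\pm1))\}$ of order~$2$, which by standard unfolding gives, on $\Re(s_1),\Re(s_2)>1$,
\[
Z(\Phi,\us)=\tfrac12\sum_{\delta\in F^\times/(F^\times)^2}\int_{G(\A)}|\tau_1(g)|^{s_1}|\tau_2(g)|^{s_2}\Phi(g\tilde\delta)\,dg.
\]
Parametrising $g=(A,\bigl(\begin{smallmatrix}1&0\\B&C\end{smallmatrix}\bigr))$ yields $g\tilde\delta=(A,AB,A(B^2-\delta C^2))$ with $\tau_1(g)=A$ and $\tau_2(g)=A^2C^2$.

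Setting $\phi_{A,B}(W):=\Phi(A,AB,A(B^2-W))$, the inner $(\delta,C)$-structure of the above integral fits the Labesse--Langlands identity \eqref{LLformula}: unfolding $\int_{\A^\times}=\int_{F^\times\bsl\A^\times}\sum_{z\in F^\times}$ and using that $(\delta,z)\mapsto\delta z^2$ is a $2$-to-$1$ map from $F^\times/(F^\times)^2\times F^\times$ onto $F^\times$, one reduces the $(\delta,C)$-summation-integral to twice the LHS of \eqref{LLformula} applied with $\sigma=s_2$, $x=1$, $\phi=\phi_{A,B}$, obtaining
\[
\sum_\delta\int_{\A^\times}|C|^{2s_2}\phi_{A,B}(\delta C^2)\,d^\times C=\sum_\chi\int_{\A^\times}\chi(W)|W|^{s_2}\phi_{A,B}(W)\,d^\times W,
\]
where $\chi$ ranges over the characters of $F^\times\R_{>0}(\A^\times)^2\bsl\A^\times$, equivalently over all real-valued (quadratic) Hecke characters of $F^\times\R_{>0}\bsl\A^\times$. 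Substituting this back, $Z(\Phi,\us)$ becomes $\tfrac12\sum_\chi$ times the global adelic analogue of the integral in Lemma~\ref{lem:coordinate-expression}. By that lemma, for factorizable $\Phi$ and $\chi=\otimes_v\chi_v$ the global integral factorises as $\Delta_F^{-3/2}\prod_v(1-q_v^{-1})^{-2}\tilde Z_v(\Phi_v,\us,\chi_v)$, and at each $v\notin S$ Theorem~\ref{thm:local} evaluates the local factor as $L_v(s_1,\mathbf1)L_v(2s_1+2s_2-1,\mathbf1)L_v(s_2,\chi_v)/\bigl(L_v(2s_1+s_2,\chi_v)N(\ff_{\chi_v})^{s_1}\bigr)$.

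Grouping the $\chi$-sum according to the $S$-restriction $\omega_S:=\bigotimes_{v\in S}\chi_v$ and recognising $\prod_{v\in S}\tilde Z_v(\Phi_v,\us,\chi_v)=\tilde Z_S(\Phi_S,\us,\omega_S)$, I obtain
\[
Z(\Phi,\us)=\tfrac12\,\Delta_F^{-3/2}\prod_{v\in S\cap\Sigma_\fin}(1-q_v^{-1})^{-2}\sum_{\omega_S}\tilde Z_S(\Phi_S,\us,\omega_S)\,D^S(\us,\omega_S),
\]
with $D^S(\us,\omega_S)$ denoting the right-hand side of the theorem. Subtracting this from \eqref{eq:glchar} yields $\sum_{\omega_S}\tilde Z_S(\Phi_S,\us,\omega_S)\bigl[\tilde\xi^S(\us,\omega_S)-D^S(\us,\omega_S)\bigr]=0$ for every $\Phi_S\in\cS(V(F_S))$; Lemma~\ref{lem:nonv} supplies test functions isolating a prescribed square class $\delta_S$, so inverting the $\omega_S$-Fourier transform via \eqref{ZetaSOmega-Delta}--\eqref{ZetaSDelta-Omega} forces each bracket to vanish, giving the theorem. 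The principal technical hurdle is the Labesse--Langlands step: one must verify the summability hypothesis \eqref{LLformula-1} for the family $\phi_{A,B}$ uniformly enough on the absolute-convergence region $\Re(s_1),\Re(s_2)>1$ to legitimise interchanging the $\chi$-summation with the remaining $(A,B)$-integrations, after which the book-keeping of the $\tfrac12$-stabilizer factor and the $\Delta_F^{-3/2}$ measure factor is routine.
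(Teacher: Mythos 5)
Your proposal is correct and is essentially the paper's own argument: unfolding the $G(F)$-orbit decomposition of $V^0(F)$ and applying the Labesse--Langlands identity \eqref{LLformula} is exactly Lemma \ref{lem:LL}, the local evaluation outside $S$ is Theorem \ref{thm:local}, and the final comparison with \eqref{eq:glchar} via test functions isolating square classes (Lemma \ref{lem:nonv}) is how the paper concludes. The summability hypothesis \eqref{LLformula-1} that you flag as the main technical hurdle is precisely what the paper checks with nonnegative majorants together with the conductor-counting bound of Lemma \ref{lem:conductor-series}, which also yields the stated absolute convergence for $\Re(s_1)>1$, $\Re(s_2)>1$.
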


For the proof of the last statement of Theorem~\ref{thm:global}, we need a lemma. 
\begin{lem}\label{lem:conductor-series} Let $S$ be a finite subset of $\Sigma$ containing $\Sigma_\infty$ and $\omega_S=\otimes_{v\in S}\omega_v$ a character of $F_S^\times/(F_S^\times)^2$. Then for any $\sigma>1$, we have $
\sum_{\chi}{N(\ff_{\chi}^S)^{-\sigma}}<+\infty$, where $\chi$ runs through all real valued idele class characters of $F^\times$ such that $\chi_v=\omega_v$ for $v\in S$. 
\end{lem}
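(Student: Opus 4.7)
My plan is to parametrize real-valued Hecke characters of $F$ by $F^\times/(F^\times)^2$ via Kummer theory and then to dominate the resulting sum by a Dirichlet series of squarefree ideals controlled by $\zeta_F(\sigma)$.

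By class field theory, the assignment $\delta \mapsto \chi_\delta$ with $\chi_\delta(x):=\prod_v (\delta,x_v)_v$ (Hilbert symbol) gives a bijection between $F^\times/(F^\times)^2$ and the set of real-valued idele class characters of $F$: triviality on $F^\times$ is Hilbert reciprocity, injectivity follows from the Hasse--Minkowski principle applied to the quadratic form $x^2-\delta y^2$, and surjectivity is the Kummer-theoretic identification of quadratic abelian extensions of $F$ (plus the trivial extension) with $F^\times/(F^\times)^2$. Under this parametrization, the constraint $\chi_v=\omega_v$ for $v\in S$ becomes a prescribed local square class for $\delta$ at each place of $S$.

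Next I would compute the local conductors place by place. For $v\in\Sigma_\fin\setminus(S\cup\Sigma_2)$ one has $\ff_{\chi_{\delta,v}}=\pi_v\fO_v$ precisely when $v(\delta)$ is odd, and $\ff_{\chi_{\delta,v}}=\fO_v$ otherwise; for $v\in\Sigma_2\setminus S$, the finiteness of $F_v^\times/(F_v^\times)^2$ yields a uniform bound on $\ff_{\chi_{\delta,v}}$ independent of $\delta$. Setting $T:=S\cup\Sigma_2$ and
\[
\fa(\delta):=\prod_{v\in\Sigma_\fin\setminus T,\;v(\delta)\text{ odd}}\pi_v\fO_v,
\]
we obtain $N(\ff_{\chi_\delta}^S)\geq c\,N(\fa(\delta))$ for a constant $c=c(F,S)>0$.

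The key counting estimate is that for each squarefree ideal $\fa$ supported in $\Sigma_\fin\setminus T$, the number of $\delta\in F^\times/(F^\times)^2$ with $\fa(\delta)=\fa$ and with the prescribed local square classes at $S$ is bounded by a finite constant $C(F,S)$. Indeed, any two such $\delta$'s differ by some $\eta\in F^\times/(F^\times)^2$ with even valuation at every $v\notin T$ and trivial local square class at every $v\in S$; the set of such $\eta$'s forms a subgroup of $F^\times/(F^\times)^2$ whose finiteness is controlled by $\fO_T^\times/(\fO_T^\times)^2$ (finite by Dirichlet's $T$-unit theorem) and $\mathrm{Cl}(F)[2]$ (finite by classical results). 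Combining the three steps then yields
\[
\sum_\chi N(\ff_\chi^S)^{-\sigma} \;\leq\; C'\sum_{\substack{\fa\text{ squarefree}\\ \mathrm{supp}(\fa)\cap T=\emptyset}} N(\fa)^{-\sigma} \;\leq\; C'\,\zeta_F(\sigma) \;<\;+\infty \qquad (\sigma>1).
\]
I expect the finiteness of the fibers in the third step to be the main technical point, requiring careful bookkeeping of the interplay between the $T$-unit group, the $2$-torsion of the class group, and the imposed local conditions at the places of $S$.
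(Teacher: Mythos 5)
Your proof is correct, but it runs along a genuinely different route than the paper's. The paper does not parametrize the characters at all: it fixes an ideal $\ff$ and bounds the number of real valued idele class characters of conductor $\ff$ by estimating the order of the group $C(\ff)=F^\times(\A^\times)^2 U(\ff)\bsl\A^\times$, getting $\#C(\ff)\leq C_0\,4^{\#S(\ff)}\ll_\epsilon N(\ff)^{\epsilon}$ with $S(\ff)$ the set of primes dividing $\ff$, and then dominates the sum by $\zeta_F^S(\sigma-\epsilon)$ after choosing $\epsilon$ with $\sigma>1+\epsilon$. You instead identify the real valued characters with square classes $\delta\in F^\times/(F^\times)^2$ via the Hilbert symbol, observe that outside $T=S\cup\Sigma_2$ the conductor is exactly the squarefree product of the primes where $\delta$ has odd valuation, and bound the fiber over each such squarefree ideal by the order of the finite group $\{\eta\in F^\times/(F^\times)^2 : v(\eta)\equiv 0 \bmod 2\ \forall v\notin T\}$, whose finiteness follows from the exact sequence relating it to $\fO_T^\times/(\fO_T^\times)^2$ and the $2$-torsion of the ($T$-)class group — exactly the technical point you flag, and it is standard. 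What each approach buys: the paper's count is softer (it needs only the finiteness of ray-class-type quotients and of local square class groups, and the $\epsilon$-loss is harmless since only $\sigma>1$ is claimed), while yours gives the sharper statement that each squarefree conductor occurs with multiplicity bounded by a constant depending only on $F$ and $S$, so the sum is dominated by $\zeta_F(\sigma)$ with no $\epsilon$-loss; the price is invoking Kummer theory/class field theory for the parametrization and the finiteness of the $2$-Selmer-type group of $T$-units. Both arguments are complete in their essentials; in yours, just make sure to state the local conductor computation only at non-dyadic places outside $S$ (as you do) and to note that the dyadic places outside $S$ only increase $N(\ff_\chi^S)$, so the lower bound $N(\ff_\chi^S)\geq N(\fa(\delta))$ already holds with $c=1$.
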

\begin{proof} Let $\fO$ be the integer ring of $F$. For a given invertible ideal $\ff\subset \fO$, the number of $\chi$ such that $\ff_{\chi}=\ff$ is bounded by $O_{\epsilon}(N(\ff)^{\epsilon})$ for any $\epsilon>0$. To prove this, it suffices to estimate the order of the group $C(\ff)=F^\times (\A^\times)^2 U(\ff) \bsl \A^\times$, where $U(\ff)=\prod_{v\in S(\ff)}(1+\ff\fO_v)\times \prod_{v\in \Sigma_\fin-S(\ff)}\fO_v^\times$ with $S(\ff)=\{v\in \Sigma_\fin|\,\ff\fO_v\subset \pi_v\fO_v\}$. Since $(F_\infty^\times)^0\subset (\A^\times)^2$, $C(\ff)$ is a quotient group of $F^\times (F_{\infty}^\times)^0 U(\ff)\bsl \A^\times$ which is finite. Let $j_{\ff}$ be the quotient from $C(\ff)$ to the group $C(\fO)=F^\times (\A^\times)^2 U(\fO) \bsl \A^\times$. Then the kernel of $j_\ff$ is isomorphic to the group $((F^\times (\A^\times)^2)\cap U(\ff))\bsl U(\fO)$. Thus there exists a surjective map from $((\A^\times)^2\cap U(\ff))\bsl U(\fO)\cong \prod_{v\in S(\ff)}((\fO_v^\times)^2 (1+\ff\fO_v)\bsl \fO_v)$ onto ${\rm ker}(j_\ff)$. Therefore, 
\begin{align*}
\#(C(\ff))\leq \#(C(\fO)) \times \#{\rm ker}(j_\ff)
&\leq \#(C(\fO))\times \#\biggl(\prod_{v\in S(\ff)}(\fO_v^\times)^2(1+\ff\fO_v)\bsl \fO_v^\times\biggr) \\
&\leq \#(C(\fO))\times \#\biggl(\prod_{v\in S(\ff)}(\fO_v^\times)^2\bsl \fO_v^\times\biggr)
\end{align*}
If $v\in S(\ff)$ is non-dyadic, then $\#((\fO_v^\times)^2\bsl \fO_v^\times)=4$. Hence we have a constant $C_0>0$ such that $\#(C(\ff))\leq C_0\,4^{\# S(\ff)}$ for all $\ff$. The bound $4^{\#S(\ff)}\ll_{\epsilon} N(\ff)^{\epsilon}$ for any $\epsilon>0$ gives us the required majorant $N(\ff)^{\epsilon}$. Let $\chi$ be a real valued idele class character of $F^\times$ such that $\otimes_{v\in S}\chi_v=\omega_S$. Then $\ff_{\chi_v}=\ff_{\omega_v}$ is independent of $\chi$. Now choose $\epsilon>0$ such that $\sigma>1+\epsilon$; then we complete the proof by    
$$\sum_{\chi}{N(\ff_{\chi}^{S})^{-\sigma}} \ll_{\epsilon} 
\prod_{v\in S}\#(\fO/\ff_{\omega_v})^{\sigma}\times \sum_{\ff} {N(\ff)^{-(\sigma-\epsilon)}} \ll_{\sigma,\omega_S} \zeta_{F}^{S}(\sigma-\epsilon)<+\infty.
$$
\end{proof}

For any $\Re(s)>1$ and any real valued idele class character $\chi$ of $F^\times\bsl \A^\times$, by looking at the Dirichlet series expressions, we easily see that
$$
|L^{S}(s,\chi)|\leq \zeta_{F}^{S}(\Re(s)), \quad |L^{S}(s,\chi)|^{-1}\leq \zeta_{F}^{S}(\Re(s)). 
$$
Thus, for any $\epsilon>0$, there exists a constant $C(\epsilon)>0$ such that 
\begin{align}
\sum_{\chi} \left|\frac{L^S(s_2,\chi)  }{ L^S(2s_1+s_2,\chi)\, N(\ff_{\chi}^S)^{s_1} }\right|\leq C(\epsilon)\,\sum_{\chi}\frac{1}{N(\ff_{\chi}^{S})^{1+\epsilon}},\quad \Re(s_1)\geq 1+\epsilon,\,\Re(s_2)\geq 1+\epsilon. 
 \label{thm:global-f1}
\end{align}
The series of the majorant is convergent by Lemma~\ref{lem:conductor-series}. 

The first assertion of Theorem~\ref{thm:global} follows from the following lemma, Theorem~\ref{thm:local}, and \eqref{eq:glchar}. 

\begin{lem}\label{lem:LL}
Let $\Phi\in \cS(V(\A))$. For any $\us=(s_1,s_2)\in \C^2$ with $\Re(s_1)>1$ and $\Re(s_2)>1$, 
\[
Z(\Phi,\us)=\frac{1}{2}\sum_\chi \int_{\A^\times} \int_{ \A^\times}  \int_{ \A} |a|^{s_1+2s_2} |c|^{s_2} \chi(c) \Phi(a ,ab, ab^2-ac ) \, \d b \, \d^\times c \, \d^\times a .
\]
where $\chi$-summation taken over all characters of $F^\times (\A^1)^2 \bsl \A^1$, is absolutely convergent. If $\Phi$ is decomposed as in \eqref{eq:test} for some $S$, we have 
\[
Z(\Phi,\us)=\frac{1}{2}\Delta_F^{-\frac{3}{2}} \sum_{\chi = \otimes_v \chi_v} \tilde Z_S(\Phi_S,\us,\chi_S)\times \prod_{v\in S\cap\Sigma_\fin}(1-q_v^{-1})^{-2} \times   \prod_{v\not\in S} (1-q_v^{-1})^{-2} \tilde Z_v(\Phi_{v,0},\us,\chi_v)
\]
where $\chi_S=\otimes_{v\in S}\chi_v$.
\end{lem}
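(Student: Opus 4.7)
The plan is to unfold the sum defining $Z(\Phi,\us)$, apply the Labesse--Langlands identity \eqref{LLformula} to replace a sum over square classes of $F^\times$ by a sum over quadratic idele class characters, and finally factorise into local pieces via Lemma~\ref{lem:coordinate-expression}.

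First I would decompose $V^{0}(F)$ into $G(F)$-orbits. Since $P_2(g\cdot x)=\tau_2(g)P_2(x)$ with $\tau_2(g)\in (F^\times)^2$, the invariant $P_2$ induces a bijection between the set of $G(F)$-orbits in $V^{0}(F)$ and $F^\times/(F^\times)^2$, with representatives $\tilde\delta:=\diag(1,-\delta)$; the stabiliser $H_\delta(F)$ of $\tilde\delta$ in $G(F)$ is readily computed to be $\{(1,\pm I)\}$, of order $2$. Unfolding $\sum_{\gamma\in G(F)}$ against $\int_{G(\A)/G(F)}$ and invoking $|\tau_j(\gamma)|=1$ (product formula), one obtains
\[
Z(\Phi,\us)=\tfrac{1}{2}\sum_{\delta\in F^\times/(F^\times)^2}\int_{G(\A)}|\tau_1(g)|^{s_1}|\tau_2(g)|^{s_2}\Phi(g\tilde\delta)\,\d g,
\]
which, writing $g=(a,h)$ with $h=\left(\begin{smallmatrix}1&0\\ b&c\end{smallmatrix}\right)$ so that $g\tilde\delta=(a,ab,a(b^2-\delta c^2))$, expands into an iterated integral of $|a|^{s_1+2s_2}|c|^{2s_2}\Phi(a,ab,a(b^2-\delta c^2))$ over $\A^\times\times\A\times\A^\times$ against $\d^\times a\,\d b\,\d^\times c$.

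Next, fix $a\in\A^\times$ and $b\in\A$, and set $f(y):=\Phi(a,ab,a(b^2-y))$. Unfolding the $c$-integral as $\int_{\A^\times}=\int_{F^\times\bsl\A^\times}\sum_{z\in F^\times}$ and using that the map $(\delta,z)\mapsto\delta z^2$ from $F^\times/(F^\times)^2\times F^\times$ to $F^\times$ is surjective and two-to-one, one obtains
\[
\sum_{\delta}\int_{\A^\times}|c|^{2s_2}f(\delta c^2)\,\d^\times c=2\int_{F^\times\bsl\A^\times}|c^2|^{s_2}\sum_{w\in F^\times}f(wc^2)\,\d^\times c.
\]
Applying \eqref{LLformula} with $x=1$, $\sigma=s_2$, and $\phi=f$ rewrites this as $\sum_\chi\int_{\A^\times}\chi(c)|c|^{s_2}f(c)\,\d^\times c$, where $\chi$ ranges over characters of $F^\times(\A^\times)^2\bsl\A^\times$; substituting back gives the first identity of the lemma. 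The integrability hypothesis \eqref{LLformula-1} needed to apply \eqref{LLformula}, as well as the asserted absolute convergence of the $\chi$-sum, both follow from Theorem~\ref{thm:local} combined with Lemma~\ref{lem:conductor-series} and the trivial bounds $|L^S(s,\chi)|^{\pm 1}\leq\zeta^S_F(\Re(s))$ for $\Re(s)>1$.

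For the second, factorised formula, I would write $\Phi=\Phi_S\otimes\Phi_0^S$ and split the global measures into local ones: $\d b=\Delta_F^{-1/2}\prod_v \d b_v$ by \eqref{HaarMesDec}, while $\d^\times a=\prod_v \d^\times a_v$ and $\d^\times c=\prod_v \d^\times c_v$. At each place the resulting local triple integral is identified, via Lemma~\ref{lem:coordinate-expression}, with $(1-q_v^{-1})^{-2}\tilde Z_v(\Phi_v,\us,\chi_v)$ for $v\in\Sigma_\fin$ (and with $\tilde Z_v$ at the archimedean places). Grouping the $v\in S$ factors into the combined $S$-integral $\tilde Z_S(\Phi_S,\us,\chi_S)$ and keeping the $v\notin S$ contributions $(1-q_v^{-1})^{-2}\tilde Z_v(\Phi_{v,0},\us,\chi_v)$ apart yields the stated identity. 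The main bookkeeping obstacle is tracking three layers of normalisation that must conspire to produce the displayed prefactor: the $\tfrac{1}{2}$ from $|H_\delta(F)|=2$, the factor $2$ from the two-to-one count cancelling the $\tfrac{1}{2}$ in \eqref{LLformula}, and the collection of $(1-q_v^{-1})$-factors together with the discriminant contribution from \eqref{HaarMesDec}.
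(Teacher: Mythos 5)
Your proposal is correct and follows essentially the same route as the paper: unfold over the $G(F)$-orbits of the representatives $\tilde\delta=\diag(1,-\delta)$ (whose stabilizer in $G(F)$ is $\{(1,\diag(1,\pm 1))\}$ rather than $\{(1,\pm I)\}$, though only its order $2$ matters), fold the square-class sum and the $c$-integral into a single sum over $F^\times$, apply \eqref{LLformula}, and factorize locally via Lemma~\ref{lem:coordinate-expression}. The only structural difference is that the paper first integrates out $(a,b)$ to form one function $\phi_{\us}(c)$ and applies \eqref{LLformula} once, verifying \eqref{LLformula-1} for $\phi_{\us}$ by majorants together with Theorem~\ref{thm:local} and Lemma~\ref{lem:conductor-series}, whereas your pointwise application in $(a,b)$ additionally requires interchanging $\sum_\chi$ with the outer $(a,b)$-integration, which is justified by exactly the same estimates.
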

\begin{proof} A direct computation yields the $G(F)$-orbit decomposition $V^0(F)=\bigcup_{z\in F^\times/(F^\times)^2}G(F)\,\tilde z$, where $\tilde z=\diag(1,-z)$ for $z\in F^\times$, and that the stabilizer of $\tilde z$ in $G(F)$ coincides with $E=\{(1,\diag(1,\epsilon)|\,\epsilon\in \{\pm 1\}\}\cong \Z/2\Z$. If we write $g=\left(a,\left(\begin{smallmatrix} 1 & 0 \\ b & c \end{smallmatrix}\right)\right)\in G(\A)$, then the relation $x=\rho(g)\tilde z$ yields $x_1=a$, $x_{12}=ab$ and $x_2=a(b^2-zc^2)$. Thus by the absolute convergence in Proposition~\ref{prop:globalana}, we compute as follows  
{\allowdisplaybreaks\begin{align*}
Z(\Phi,\us)&=\sum_{z\in F^\times/(F^\times)^2} \int_{G(\A)/E}|\tau_1(g)|^{s_1}|\tau_2(g)|^{s_2}\Phi(g\,\tilde z)\,\d g\\
&=\sum_{z\in F^\times/(F^\times)^2} \int_{c\in \A^\times/\{\pm 1\}} \biggl(
\int_{\A^\times}\int_{\A}|a|^{s_1}|a^2c^2|^{s_2}\Phi(a,ab,a(b^2-zc^2)\,\d^\times a\,\d b \biggr) \,\d^\times c
\\
&=\sum_{z\in F^\times/(F^\times)^2} \int_{c\in \A^\times/F^\times }\,\sum_{\tau \in F^\times/\{\pm 1\}} \biggl(\int_{\A^\times}\int_{\A} |a|^{s_1}|a^2\tau^2c^2|^{s_2}\Phi(a,ab,a(b^2-z\tau^2c^2)\,\d^\times a\,\d b\biggr) \,\d^\times c \\
&=\int_{c\in F^\times\bsl \A^\times}  |c^2|^{s_2} \sum_{z\in F^\times} \biggl(\int_{\A^\times} \int_{\A} |a|^{s_1}|a^2|^{s_2}\Phi(  a , ab , ab^2-ac^2z ) \, \d b \, \d^\times a \biggr)\,\d^\times c
\\
&=\int_{c\in F^\times\bsl \A^\times}  |c^2|^{s_2} \sum_{z\in F^\times} \phi_{\us}(c^2z)\d^\times c,
\end{align*}}where $\phi_\us$ is a continuous function on $\A$ defined by 
$$
\phi_\us(c)=\int_{\A} \int_{\A^\times } |a|^{s_1+2s_2-1}\Phi(  a , b , a^{-1} b^2-ac ) \, \d b \, \d^\times a, \quad c\in \A.
$$
which is absolutely convergent for $\Re(s_1+2s_2-1)>1$ due to $\Phi\in \cS(\A)$. Let $\Phi_v^{+}\in \cS(F_v)$ be a non-negative majorant of $\Phi_v$ for $v\in S$. Then from 
$$\int_{\A^\times}|c|^{\Re(s_2)}|\phi_\us(c)|\,\d^\times c
 \leq \prod_{v\in \Sigma}\tilde Z_v(\Phi_v^{+},\Re(\us),{\bf 1})
$$
and Theorem~\ref{thm:local}, the integral of $|c|^{\Re(s_2)}|\phi_\us(c)|$ over $\A^\times$ is majorized by $\prod_{v\in S}\tilde Z_v(\Phi_v^{+},\us,{\bf 1})\times \zeta_F^{S}(\Re(s_1))\zeta_F^S(\Re(s_2))\zeta_F^{S}(2\Re(s_1)+2\Re(s_2)-1)\zeta_F^S(2\Re(s_1)+s_2)^{-1}$. From this, together with the convergence of local zeta integrals recalled in \S~\ref{sec:BasicFactsLocalZeta}), we see that $|c|^{\Re(s_2)}|\phi_\us(c)|$ is integrable on $\A^\times$ when $\Re(s_1)>1$, $\Re(s_2)>1$.  
From Theorem~\ref{thm:local}, 
\begin{align*}
\sum_{\chi}\left|\int_{\A^\times} |c|^{s_2}\phi_\us(c)\chi(c)\d^\times c\right|& \leq \prod_{v\in S}\tilde Z_v(\Phi_v^{+},\us,{\bf 1})\times |\zeta_F^{S}(s_1)\zeta_F^{S}(2s_1+2s_2-1)| \\
&\quad \times \sum_{\chi}\left|\frac{L^S(s_2,\chi)}{L^{S}(2s_1+s_2,\chi)\,N(\ff_\chi^{S})^{s_2}}\right|,
\end{align*}
Since the $\chi$-summation is convergent as we already saw above, the condition \eqref{LLformula-1} for $\phi_\us$ is also satisfied for $\us$ with $\Re(s_1)>1$ and $\Re(s_2)>1$. 
Now we apply \eqref{LLformula} to the last formula of $Z(\Phi,\us)$ to complete the proof.  
\end{proof}

\subsection{Principal part of global zeta integrals and meromorphic continuation}\label{sec:prin}

In this section, we will study the meromorphic continuation of the global zeta integral $Z(\Phi,\us)$ using Shintani's arguments in \cite{Shintani}.
For the argument, we need not only the usual global Fourier transform 
\[
\widehat\Phi(y):=\int_{V(\A)} \Phi(x) \, \psi_F(\langle x,y\rangle)\, \d x,
\]
but also partial Fourier transforms of $\Phi\in\cS(V(\A))$ such as
\[
\Phi^{(3)}(x_1,x_2,y_3):=\int_\A \Phi(x_1,x_2,x_3) \, \psi_F(x_3y_3) \, \d x_3.
\]
In the same way, we have the partial Fourier transform $\Phi^{(1)}$ and $\Phi^{(2)}$ with respect to the first variable and the second variable, respectively. We set $\Phi^{(i,j)}=(\Phi^{(i)})^{(j)}$ for $1\leq i\not=j\leq 3$. The truncated global zeta integral 
\[
Z_+(\Phi,\us):=\int_{G(\A)/G(F), \, |\tau_2(g)|>1} |\tau_1(g)|^{s_1}|\tau_2(g)|^{s_2} \sum_{x\in V^0(F)} \Phi(g \cdot x) \, \d g 
\]
is absolutely convergent for $\Re(s_1)>1$ defining a holomorphic function on the domain $\Re(s_1)>1$ of $\C^2$. Set 
\[
T(\Phi,s):=\int_{\A^\times} \int_\A |a|^{s-1} \Phi(a,b,a^{-1}b^2) \, \d b \, \d^\times a, \quad \Phi\in \cS(V(\A)), 
\]
which is seen to be absolutely convergent on $\Re(s)>1$ as follows. For a finite set of places $S\subset \Sigma$, we define its local counterpart $T_{S}(\Phi_S,s)$ for $\Phi_S\in \cS(V(F_S))$; $T_S(\Phi_S,s)$ is easily seen to be absolutely convergent for $\Re(s)>1/2$ and $T_S(\Phi_S,s)$ is meromorphically continued to $\C$.
If $v\in \Sigma_\fin$, then a computation shows
\begin{equation}\label{eq:texp12}
T_v(\Phi_{v,0},s)=\frac{1+q_v^{-s}}{1-q_v^{-2s+1}}=\frac{1-q_v^{-2s}}{(1-q_v^{-s})(1-q_v^{-2s+1})}. 
\end{equation}
Hence, for any $\Phi$ satisfying \eqref{eq:test}, we have the demanded absolute convergence on $\Re(s)>1$ as well as the formula 
\begin{equation}\label{eq:singular3.17}
T(\Phi,s)=\Delta_F^{-1/2}\frac {\zeta_F^S(s)\zeta_F^S(2s-1)}{\zeta_F^S(2s)}\, T_S(\Phi_S,s),
\end{equation}
which establishes a meromorphic continuation of $T(\Phi,s)$ to $\C$.
\begin{lem}\label{lem:holT}
Set $u_v(s):=\Gamma(2s-1)$ if $v\in\Sigma_\C$, $u_v(s):=\Gamma(\frac{2s-1}{2})$ if $v\in\Sigma_\R$, and $u_v(s):=1-q_v^{-2s+1}$ if $v\in\Sigma_\fin$.
Then, $\prod_{v\in S}u_v(s)\times T_S(\Phi_S,s)$ is holomorphic on the domain $\Re(s)>0$.
Hence, $T(\Phi,s)$ is holomorphic on the domain $\Re(s)>0$ except for $s=1$ by \eqref{eq:singular3.17}.
\end{lem}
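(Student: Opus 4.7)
The plan is to reduce to the pure tensor case $\Phi_S = \bigotimes_{v\in S}\Phi_v$, so that $T_S(\Phi_S,s) = \prod_{v\in S} T_v(\Phi_v,s)$ factors as a product of local integrals. One then verifies the holomorphicity of $u_v(s)\,T_v(\Phi_v,s)$ at each place separately, and deduces the statement for $T(\Phi,s)$ via \eqref{eq:singular3.17}.

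For $v \in S \cap \Sigma_\fin$, the approach is to imitate the case analysis in the proof of Theorem~\ref{thm:local}: decompose the $(a,b)$-integral according to the $\pi_v$-adic valuations of $a$ and $b$, and sum the resulting geometric series in $q_v^{-s}$. Because $\Phi_v \in \cS(V(F_v))$ is supported on finitely many strata of the form $\pi_v^\ell\fO_v^\times \times \pi_v^m\fO_v \times \pi_v^n\fO_v$, the computation reduces to a finite collection of elementary sums, producing a rational function in $q_v^{-s}$ whose only denominator is $1-q_v^{-2s+1}=u_v(s)$. The prototype is the explicit formula \eqref{eq:texp12} for $\Phi_v = \Phi_{v,0}$, giving $T_v=(1+q_v^{-s})/(1-q_v^{-2s+1})$; multiplication by $u_v$ therefore yields an entire function.

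For $v\in \Sigma_\inf$, the key move is the substitution $b = |a|_v^{1/2}c$ (real case; analogous in the complex case), which normalizes $a^{-1}b^2$ to $\sgn(a)\,c^2$. After integrating out $c$ and combining contributions from the two signs of $a$, one obtains
\[ T_v(\Phi_v,s) \;=\; \int_0^\infty a^{s-3/2} H_v(a)\, \d a \]
(real case; the complex case produces a similar expression), where $H_v$ is a smooth, rapidly decreasing function on $[0,\infty)$ built from $\Phi_v$. This is a standard Mellin-type integral, meromorphic in $s$ with simple poles at $s=\tfrac{1}{2}-k$ ($k\geq0$) governed by the Taylor coefficients $H_v^{(k)}(0)$. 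Since $u_v(s)$ is the archimedean Gamma factor attached to $\zeta_F(2s-1)$ at $v$, the classical pole--matching between Schwartz-function Mellin transforms and $\Gamma$ shows that $u_v(s)\,T_v(\Phi_v,s)$ extends holomorphically.

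For the \emph{hence} part, substitute into \eqref{eq:singular3.17} and write
\[ T(\Phi,s) \;=\; \frac{\Delta_F^{-1/2}\,\zeta_F^S(s)\,\zeta_F^S(2s-1)}{\zeta_F^S(2s)\,\prod_{v\in S} u_v(s)} \;\cdot\; \prod_{v\in S}\bigl(u_v(s)\,T_v(\Phi_v,s)\bigr). \]
The second factor is holomorphic on $\Re(s)>0$ by the local analysis. In the first factor, the only poles inside $\Re(s)>0$ come from $\zeta_F^S(s)$ and $\zeta_F^S(2s-1)$, both at $s=1$, while the zero of $\zeta_F^S(2s)^{-1}$ at $s=\tfrac12$ combined with the zeros/poles of the $u_v$'s cancel out against one another. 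The main technical obstacle is exactly this bookkeeping at $s=\tfrac12$: one must verify that the zero of $\zeta_F^S(2s)^{-1}$ together with the order of vanishing of $\prod_{v\in S}u_v(s)$ exactly compensates the archimedean contributions from $\prod u_v^{-1}$, so that no spurious pole survives in $\Re(s)>0$ apart from $s=1$.
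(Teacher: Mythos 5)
Your treatment of the finite places is essentially the paper's own argument: the paper reduces $\Phi_v$ to characteristic functions of products of balls $\pi_v^{l_j}(\alpha_j+\pi_v^{m_j}\fO_v)$ and observes that only the case $\alpha_1=\alpha_2=\alpha_3=0$ produces the denominator of \eqref{eq:texp12}, which is what your valuation-by-valuation summation amounts to (your phrase ``supported on finitely many strata $\pi_v^\ell\fO_v^\times\times\pi_v^m\fO_v\times\pi_v^n\fO_v$'' is literally false --- already $\Phi_{v,0}$ meets infinitely many such strata in the first variable, and the infinite geometric series over that valuation is precisely the source of the factor $(1-q_v^{-2s+1})^{-1}$ --- but the computation you then describe is the right one). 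The genuine gap is archimedean. Your own Mellin analysis, done carefully, shows that $T_v(\Phi_v,s)$ generically has a \emph{simple pole at $s=\tfrac12$}: the substitution $b=|a|_v^{1/2}c$ gives $H_v$ an expansion in half-integer powers of $|a|$ (so $H_v$ is not smooth at $0$ and the candidate poles are at $s=\tfrac12-\tfrac k2$), and the residue at $s=\tfrac12$ is a multiple of $\int_\R\bigl(\Phi_v(0,0,c^2)+\Phi_v(0,0,-c^2)\bigr)\,\d c$, nonzero in general. Since $\Gamma\bigl(\tfrac{2s-1}{2}\bigr)$ has a pole, not a zero, at $s=\tfrac12$, multiplying by it cannot yield a function holomorphic there; the classical ``pole matching'' runs the other way, namely $T_v(\Phi_v,s)\,\Gamma\bigl(\tfrac{2s-1}{2}\bigr)^{-1}$ (resp. $\Gamma(2s-1)^{-1}$ for $v\in\Sigma_\C$) is holomorphic on $\Re(s)>0$, which is what the paper's citation of Igusa, Chapter 5, delivers; the archimedean $u_v$ has to be read as that reciprocal for the first assertion to hold at all, and your concluding inference for $v\in\Sigma_\inf$ is wrong as written.

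The ``hence'' step is also left with a real hole: you flag the bookkeeping at $s=\tfrac12$ as ``the main technical obstacle'' without carrying it out, and the cancellations needed are not the ones you point to. Along the line $\Re(s)=\tfrac12$, the poles of $T_v$ for $v\in S\cap\Sigma_\fin$ (at the zeros of $1-q_v^{-2s+1}$) are absorbed by the removed Euler factors inside $\zeta_F^S(2s-1)=\zeta_F(2s-1)\prod_{v\in S\cap\Sigma_\fin}(1-q_v^{-2s+1})$, i.e.\ exactly by the finite $u_v$'s, not by $\zeta_F^S(2s)^{-1}$. At the point $s=\tfrac12$ itself the archimedean factors contribute a pole of order up to $r_1+r_2$, and it is absorbed by the zero of $\zeta_F(2s-1)$ at $2s-1=0$, of order $r_1+r_2-1$, together with the simple zero of $\zeta_F^S(2s)^{-1}$ coming from the pole of $\zeta_F^S$ at $1$; without this count the conclusion does not follow. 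Finally, holomorphy on all of $\Re(s)>0$ cannot be extracted from \eqref{eq:singular3.17} anyway, since $\zeta_F^S(2s)$ has zeros in $0<\Re(s)<\tfrac12$ that are not cancelled for general $\Phi$; the provable statement (and the one used later, e.g.\ in Corollary \ref{s_1=1}) is holomorphy on $\Re(s)\geq\tfrac12$ away from $s=1$, so your claim that in the first factor ``the only poles inside $\Re(s)>0$ come from $\zeta_F^S(s)$ and $\zeta_F^S(2s-1)$'' needs this restriction as well.
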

\begin{proof}
It is sufficient to prove the assertion for each place $v\in S$.
For $v\in\Sigma_\inf$, the assertion follows from the general theory, see \cite[Chapter 5]{Igusa}.
Let $v\in\Sigma_\fin$ and consider the integral $T_v(\Phi_v,s)=\int_{F_v^\times}\int_{F_v} |a|_v^{s_1-1} \Phi_v(a,b,a^{-1}b^2) \, \d b \, \d^\times a$.
Since $\Phi_v$ is locally constant, we may suppose that $\Phi$ is the characteristic function of $\{ (x_1,x_2,x_3) \mid x_j\in \pi^{l_j}(\alpha_j+\pi^{m_j}\fO_v)     \;\; (1\leq j\leq 3)\}$ for some $l_j\in\Z$, $\beta\in\fO_v$ and large numbers $m_j\in\Z_{>0}$ without loss of generality.
Then, one finds that $T_v(\Phi_v,s)$ is holomorphic unless $\alpha_1=\alpha_2=\alpha_3=0$ .
Therefore, the assertion follows from \eqref{eq:texp12}.
\end{proof}

\begin{lem}\label{lem:princ}
If $\Re(s_1)>1$ and $\Re(s_2)>1$, then
\begin{align*}
Z(\Phi,\us)=& Z_+(\Phi,\underline{s})+Z_+(\widehat\Phi,s_1,\frac{3}{2}-s_1-s_2) + \frac{c_F}{2s_1+2s_2-3} T(\widehat\Phi,s_1) - \frac{c_F}{2s_2} T(\Phi,s_1) \\
& +\frac{c_F}{2s_2-2} \zeta(\widehat\Phi^{(3)}(0,0,\cdot),s_1) - \frac{c_F}{2s_1+2s_2-1} \zeta(\Phi^{(3)}(0,0,\cdot),s_1),
\end{align*}
where we set $c_F:=\mathrm{Res}_{s=1}\zeta_F(s)=\vol(F^\times\bsl \A^1)$.
\end{lem}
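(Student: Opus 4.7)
I would adopt Shintani's classical strategy: split $Z(\Phi,\us)$ at $|\tau_2(g)|=1$ and process the divergent half by Poisson summation on $V(\A)/V(F)$.

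Write $Z(\Phi,\us)=Z_+(\Phi,\us)+Z_-(\Phi,\us)$ according to $|\tau_2(g)|\gtreqless 1$; the absolute convergence in Proposition~\ref{prop:globalana} shows that $Z_+(\Phi,\us)$ is holomorphic on $\Re(s_1)>1$. In $Z_-$, decompose $\sum_{V^0(F)}=\sum_{V(F)}-\sum_{V(F)\setminus V^0(F)}$ and apply Poisson summation to the full lattice sum. In the ordered basis $(x_1,x_{12},x_2)$ the matrix of $\rho(g)$ is lower triangular with diagonal $(a,ac,ac^2)$, so $|\det\rho(g)|=|\tau_2(g)|^{3/2}$ and
\[
\sum_{x\in V(F)}\Phi(\rho(g)x)=|\tau_2(g)|^{-3/2}\sum_{y\in V(F)}\widehat\Phi(\hat\rho(g)y).
\]
Since $\hat\rho(a,h)=\rho(a^{-1}\det(h)^{-2},h)$, the measure-preserving substitution $g=(a,h)\mapsto g^\ast=(a^{-1}c^{-2},h)$ (where $h=\left(\begin{smallmatrix}1&0\\b&c\end{smallmatrix}\right)$) converts $\hat\rho(g)$ into $\rho(g^\ast)$, swaps $\{|\tau_2|\le 1\}$ with $\{|\tau_2|\ge 1\}$, and satisfies $|\tau_1(g)|^{s_1}|\tau_2(g)|^{s_2-3/2}=|\tau_1(g^\ast)|^{s_1}|\tau_2(g^\ast)|^{3/2-s_1-s_2}$. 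Restricting the transformed sum to $V^0(F)$ gives $Z_+(\widehat\Phi,s_1,\tfrac{3}{2}-s_1-s_2)$, so
\[
Z(\Phi,\us)=Z_+(\Phi,\us)+Z_+(\widehat\Phi,s_1,\tfrac{3}{2}-s_1-s_2)+A^\ast(\widehat\Phi)-A(\Phi),
\]
where $A(\Phi)$ and $A^\ast(\widehat\Phi)$ collect the residual contributions of $V(F)\setminus V^0(F)$ on the direct and Poisson-dual sides.

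Next I would stratify
\[
V\setminus V^0=\{P_2=0,\,P_1\ne 0\}\,\sqcup\,\{P_1=0,\,x_{12}\ne 0\}\,\sqcup\,\{x_1=x_{12}=0,\,x_2\ne 0\}\,\sqcup\,\{0\}
\]
into $G$-orbits and unfold each orbit sum against the $G(F)\bsl G(\A)$-integration. Decomposing $\A^\times=\A^1\times\R_{>0}$ and using $\mathrm{vol}(F^\times\bsl\A^1)=c_F$, the truncated $\R_{>0}$-part of the $|\tau_2|$-integration produces $c_F$ times a simple pole in the appropriate linear form of $(s_1,s_2)$. The cone $\{P_2=0,\,P_1\ne 0\}$, a single $G(F)$-orbit through $\mathrm{diag}(1,0)$ with stabilizer $\GL(1)\cong\{(1,\mathrm{diag}(1,c))\}$, unfolds to the Tate-style integral $T(\Phi,s_1)$, and the $c$-integration over $|ac|\le 1$ gives the pole $\tfrac{1}{2s_2}$, yielding $-\tfrac{c_F}{2s_2}T(\Phi,s_1)$; the dual mirror provides $+\tfrac{c_F}{2s_1+2s_2-3}T(\widehat\Phi,s_1)$. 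For the one-dimensional stratum $\{x_1=x_{12}=0,\,x_2\ne 0\}$, an intermediate Poisson summation in the $x_2$-direction on $F\bsl\A$ converts the lattice sum of $\Phi(0,0,\cdot)$ into one involving $\Phi^{(3)}(0,0,\cdot)$, giving $-\tfrac{c_F}{2s_1+2s_2-1}\zeta(\Phi^{(3)}(0,0,\cdot),s_1)$ on the direct side and $+\tfrac{c_F}{2s_2-2}\zeta(\widehat\Phi^{(3)}(0,0,\cdot),s_1)$ on the dual side. The stratum $\{P_1=0,\,x_{12}\ne 0\}$ and the origin $\{0\}$ cancel against their dual counterparts.

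\emph{Main obstacle.} The delicate part is the bookkeeping in the last step: (i) legitimizing the intermediate Poisson summation along the one-dimensional stratum so as to correctly identify $\Phi^{(3)}$ (rather than some other partial Fourier transform); (ii) verifying the exact cancellation of the stratum $\{P_1=0,\,x_{12}\ne 0\}$ and the origin against their Poisson-dual images, which ultimately reflects the self-duality of the pairing $\langle\cdot,\cdot\rangle$; and (iii) matching the four pole factors $(2s_2)^{-1}$, $(2s_1+2s_2-3)^{-1}$, $(2s_2-2)^{-1}$, $(2s_1+2s_2-1)^{-1}$ with correct signs and constants. The arguments parallel those of \cite[\S 1]{Shintani} for $F=\Q$ and extend to the adelic setting with standard measure-theoretic adjustments.
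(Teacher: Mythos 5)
Your plan is exactly the argument the paper itself invokes: its proof of this lemma is a one-line appeal to \cite[Lemma 4]{Shintani}, i.e.\ the same splitting at $|\tau_2(g)|=1$, Poisson summation on $V(F)$ with $|\det\rho(g)|=|\tau_2(g)|^{3/2}$ and the substitution turning $\hat\rho(g)$ into $\rho(g^\ast)$, and the orbit-by-orbit evaluation of the singular strata that you outline, and the concrete data you assert (the exponent bookkeeping, the $T$-terms from $\{P_2=0,\,P_1\neq 0\}$ with factors $-c_F/(2s_2)$ and $+c_F/(2s_1+2s_2-3)$, the $\zeta(\Phi^{(3)}(0,0,\cdot),s_1)$-terms with factors $-c_F/(2s_1+2s_2-1)$ and $+c_F/(2s_2-2)$) all check out. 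The one caution is the point you already flag as obstacle (ii): the contributions of $\{P_1=0,\,x_{12}\neq 0\}$ and $\{0\}$ are individually divergent, so they cannot be evaluated separately and then cancelled; one must keep the direct and Poisson-dual sums over $\{x_1=0\}$ paired, perform the partial Poisson summation in the $x_2$-direction and the $b$-integration, and then a further Poisson summation in the $x_{12}$-direction (using $\widehat\Phi^{(3)}(0,y,0)=\int\!\!\int\Phi(0,x_{12},x_2)\psi_F(-2x_{12}y)\,\d x_{12}\,\d x_2$) makes the remaining integrands cancel pointwise before any divergent integration is attempted.
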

\begin{proof}
The assertion can be proved by the same argument as in the proof of \cite[Lemma 4]{Shintani}.
\end{proof}

\begin{cor}\label{cor:ac1}
For any $\delta_S\in F^\times_S/(F^\times_S)^2$, it follows that
\[
(s_2-1)(2s_1+2s_2-3)\xi^S(\us,\delta_S)
\]
are analytically continued to a holomorphic function on the domain
\[
\mathscr{D}_1:=\{ (s_1,s_2)\in\C^2 \mid \Re(s_1)>1 \}.
\]
\end{cor}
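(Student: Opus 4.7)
\textbf{Proof plan for Corollary~\ref{cor:ac1}.} The approach is to realize $\xi^S(\us,\delta_S)$, up to a non-singular quotient, as the global zeta integral $Z(\Phi,\us)$ for a carefully chosen $\Phi$, and then extract the singular part of $Z(\Phi,\us)$ from the principal-part formula of Lemma~\ref{lem:princ}. Fix a point $\us_0\in\mathscr{D}_1$. By Lemma~\ref{lem:nonv} applied at each $v\in S$, we may choose $\Phi_S\in\cS(V(F_S))$ with compact support contained in $V^0(F_S,\delta_S)$ such that $Z_S(\Phi_S,\us_0,\delta_S)\neq 0$; because the sets $V^0(F_S,\delta_S')$ for distinct $\delta_S'\in F_S^\times/(F_S^\times)^2$ are pairwise disjoint, the support condition on $\Phi_S$ also forces $Z_S(\Phi_S,\us,\delta_S')\equiv 0$ for every $\delta_S'\neq \delta_S$. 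Setting $\Phi:=\Phi_S\otimes\Phi_0^S$, the compact support of $\Phi_S$ inside the regular locus makes $Z_S(\Phi_S,\us,\delta_S)$ entire and non-vanishing on a neighborhood $U\subset\mathscr{D}_1$ of $\us_0$, so \eqref{eq:zeta2019Ap9} expresses $\xi^S(\us,\delta_S)$ on $U$ as a fixed nonzero constant times $Z(\Phi,\us)/Z_S(\Phi_S,\us,\delta_S)$.

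The decisive observation is that for this $\Phi$, two of the four singular terms in Lemma~\ref{lem:princ} vanish identically. Any point of the form $(a,b,a^{-1}b^2)$ satisfies $P_2=b^2-a\cdot a^{-1}b^2=0$, so its $S$-component lies outside $V^0(F_S)\supseteq\mathrm{supp}(\Phi_S)$; hence $T(\Phi,s_1)\equiv 0$. Likewise any $(0,0,x_3)$ has $P_1=0$, so $\Phi(0,0,x_3)=0$, whence $\Phi^{(3)}(0,0,\cdot)\equiv 0$ and $\zeta(\Phi^{(3)}(0,0,\cdot),s_1)\equiv 0$. Consequently Lemma~\ref{lem:princ} collapses to
\begin{align*}
Z(\Phi,\us) &= Z_+(\Phi,\us)+Z_+\bigl(\widehat\Phi,s_1,\tfrac{3}{2}-s_1-s_2\bigr) \\
&\quad +\tfrac{c_F}{2s_1+2s_2-3}\,T(\widehat\Phi,s_1)+\tfrac{c_F}{2s_2-2}\,\zeta(\widehat\Phi^{(3)}(0,0,\cdot),s_1).
\end{align*}

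To finish, I would locate the poles of the right-hand side inside $\mathscr{D}_1$. The two truncated integrals $Z_+(\Phi,\us)$ and $Z_+(\widehat\Phi,s_1,\tfrac{3}{2}-s_1-s_2)$ are holomorphic on $\{\Re(s_1)>1\}$ by the defining convergence of $Z_+$; $T(\widehat\Phi,s_1)$ is holomorphic on $\Re(s_1)>1$ by Lemma~\ref{lem:holT} (whose only possible singularity is at $s_1=1$); and the Tate integral $\zeta(\widehat\Phi^{(3)}(0,0,\cdot),s_1)$ has at most simple poles at $s_1\in\{0,1\}$. Thus on $\mathscr{D}_1$ the only singularities of $Z(\Phi,\us)$ come from the scalar factors $1/(2s_1+2s_2-3)$ and $1/(2s_2-2)$, both simple along the hyperplanes $2s_1+2s_2-3=0$ and $s_2=1$, which are killed by the prefactor $(s_2-1)(2s_1+2s_2-3)$. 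Dividing by the entire, non-vanishing $Z_S(\Phi_S,\us,\delta_S)$ on $U$ preserves this holomorphy, and since $\us_0\in\mathscr{D}_1$ was arbitrary, the assertion follows. The main obstacle to highlight is the vanishing of $T(\Phi,s_1)$ and $\zeta(\Phi^{(3)}(0,0,\cdot),s_1)$: without these cancellations the candidate pole locus $s_2=0$ (and also $s_1+s_2=\tfrac12$) would survive in $\mathscr{D}_1$, and the prefactor $(s_2-1)(2s_1+2s_2-3)$ would be inadequate; it is precisely the support condition $\mathrm{supp}(\Phi_S)\subset V^0(F_S)$ that removes these terms.
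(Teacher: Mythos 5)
Your proposal is correct and follows essentially the same route as the paper: choose $\Phi=\Phi_S\otimes\Phi_0^S$ with $\Phi_S$ as in Lemma~\ref{lem:nonv} supported in $V^0(F_S,\delta_S)$, observe that $T(\Phi,s_1)=0$ (since $P(a,b,a^{-1}b^2)=0$) and $\Phi^{(3)}(0,0,\cdot)=0$, so that Lemma~\ref{lem:princ} leaves only the factors $1/(2s_1+2s_2-3)$ and $1/(2s_2-2)$ as possible singularities on $\mathscr{D}_1$, and conclude via \eqref{eq:zeta2019Ap9}. Your extra care about non-vanishing of $Z_S(\Phi_S,\us,\delta_S)$ on a neighborhood of each point is a fair elaboration of what the paper leaves implicit, not a different method.
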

\begin{proof}
Choose a test function $\Phi$ as \eqref{eq:test} and $\Phi_S=\otimes_{v\in S}\Phi_v\in\cS(V(F_S))$ as in Lemma \ref{lem:nonv}.
Then, one gets $T(\Phi,s_1)=0$ by $P(a,b,a^{-1}b^2)=0$.
We also get $\Phi^{(3)}(0,0,a)=0$ for any $a\in\A$ by the definition. 
Hence, $(s_2-1)(2s_1+2s_2-3)Z(\Phi,\us)$ is analytically continued to a holomorphic function on $\mathscr{D}_1$ by Lemma \ref{lem:princ}, and so the assertion follows from \eqref{eq:zeta2019Ap9}.
\end{proof}

We set, for $j=1$ or 2, 
\[
V^1(F):=\{ x\in V(F) \mid P(x)\neq 0 ,\quad P(x)\not\in  (F^\times)^2   \}, \quad V^2(F):= V^0(F) \setminus V^1(F) ,
\]
\[
Z_j(\Phi,\us):=\int_{G(\A)/G(F)} |\tau_1(g)|^{s_1}|\tau_2(g)|^{s_2} \sum_{x\in V^j(F)} \Phi(g \cdot x) \, \d g ,
\]
\[
Z_{j,+}(\Phi,\us):=\int_{G(\A)/G(F), \, |\tau_2(g)|>1} |\tau_1(g)|^{s_1}|\tau_2(g)|^{s_2} \sum_{x\in V^j(F)} \Phi(g \cdot x) \, \d g.
\]
Clearly,
\[
Z(\Phi,\us)=Z_1(\Phi,\us)+Z_2(\Phi,\us)\;\; \text{and} \;\; Z_+(\Phi,\us)=Z_{1,+}(\Phi,\us)+Z_{2,+}(\Phi,\us).
\]
For $h=k\begin{pmatrix}a&0 \\ 0&c\end{pmatrix} \begin{pmatrix} 1&0 \\ b&1 \end{pmatrix}\in \GL(2,\A)$ and $k\in K$, let $t(h):=\left| \frac{a}{c}\right|^{1/2}$. 
For $s\in\C$, define the Eisenstein series 
\[
E(h,s):= \sum_{\gamma\in \GL(2,F)/B(F)} t(h\gamma)^s,\quad 
\]
where $B$ denotes the Borel subgroup consisting of lower triangular matrices in $\GL(2)$.
It is known that $E(h,s)$ absolutely converges and locally uniformly for $\Re(s)>2$ and $h\in\GL(2,\A)$, and satisfies the functional equation
\[
\widehat\zeta_F(2-s)\, E(h,2-s) = \widehat\zeta_F(s)\, E(h,s).
\]
Furthermore, $E(h,s)$ is holomorphic for $\Re(s)>1$ except for a simple pole at $s=2$.
By \cite[Lemma 6.1 (iii)]{W}, $\rho_0=\mathrm{Res}_{s=1}E(h,2s)=\dfrac {\mathrm{Res}_{s=1} \widehat\zeta_F(s)}{2\widehat\zeta_F(2)}=\dfrac {c_F \pi^{r_1}(2\pi)^{r_2}}{2\Delta_F^{\frac 12}\zeta_F(2)}$.

The group $G''=\GL(1)\times \PGL(2)$ acts on $V$ as $g\cdot x=\frac{a}{\det(h)} hx{}^t\!h$, $g=(a,h)\in G''$, $x\in V$.
For $\Phi\in\cS(V(\A))$ and $s\in\Bbb C$, we set
\[
\mathfrak Z(\Phi,s):=\int_{G''(\A)/G''(F)} |a|^{2s} \sum_{x\in V^1(F)} \Phi(g\cdot x) \, \d g.  
\]
The zeta integral $\mathfrak Z(\Phi,s)$ absolutely converges for $\Re(s)>3/2$.
Its meromorphic continuation to $\C$ was proved in \cite{Yukie}, and its explicit formula was studied in \cite{Datskovsky} and also in \cite{HW}.
Yukie \cite{Yukie} showed that $\mathfrak Z(\Phi,s)$ has a simple pole at $s=\frac 32$ and at most a double pole at $s=1$.

\begin{prop}\label{prop:eisen}
Suppose $\Phi$ is $K$-spherical.
Then, one has
\[
Z_1(\Phi,\us)=2\int_{G''(\A)/G''(F)} |a|^{s_1+2s_2} \, E(h,2s_1) \sum_{x\in V^1(F)}\Phi(g\cdot x)\, \d g.
\]
Therefore, $Z_1(\Phi,\us)$ is meromorphically continued to the domain
\[
\mathscr{D}_2:=\{ (s_1,s_2)\in\C^2 \mid \Re(s_1+2s_2)>2+\max\{\Re(s_1),1-\Re(s_1),1\} \}.
\]
In particular, $(s_1-1) \, Z_1(\Phi,\us)$ is a holomorphic function on $\mathscr{D}_2$.
Furthermore, if $\Re(s_2)>1$, then one has
\[
\mathrm{Res}_{s_1=1}Z_1(\Phi,\us) =  2\rho_0\, \mathfrak Z(\Phi,s_2+\frac{1}{2}).
\] 
\end{prop}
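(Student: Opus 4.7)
The plan is to derive the asserted Eisenstein-series identity by an unfolding argument, then read off the meromorphic continuation and the residue from the analytic properties of $E(h,2s_1)$.

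First, I would observe that the homomorphism $\iota: G \to G''$ given by $(a, h) \mapsto (a\det(h), [h])$ is injective and $F$-rational, and identifies $G$ with the subgroup $\tilde G := \GL(1) \times \tilde B$ of $G''$, where $\tilde B = B/Z$ is the lower Borel of $\PGL(2)$. For $g = (a, (1,0;b,c)) \in G(\A)$ with image $g'' = (ac, [(1,0;b,c)])$, a direct Iwasawa computation gives $t([(1,0;b,c)]) = |c|^{-1/2}$ and hence
\[
|ac|^{s_1+2s_2} \, t([(1,0;b,c)])^{2s_1} = |a|^{s_1+2s_2}|c|^{2s_2} = |\tau_1(g)|^{s_1}|\tau_2(g)|^{s_2}.
\]
Thus the integrand for $Z_1$ on $G(\A)$ is the restriction to $\tilde G$ of the $G''$-side integrand. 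Moreover, since $V^1(F)$ is stable under $G''(F)$, the function $\phi(g'') := \sum_{x\in V^1(F)} \Phi(g''\cdot x)$ is right $G''(F)$-invariant, and the $K$-sphericality of $\Phi$ makes it right $K$-invariant.

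Second, I would unfold the Eisenstein series. Using $E(h,2s_1) = \sum_{\gamma \in \GL(2,F)/B(F)} t(h\gamma)^{2s_1}$ and the $G''(F)$-invariance of $\phi$, a standard manipulation gives
\[
\int_{G''(\A)/G''(F)} |a|^{s_1+2s_2} E(h,2s_1) \phi(g)\, \d g = \int_{G''(\A)/(F^\times \times \tilde B(F))} |a|^{s_1+2s_2}\, t(h)^{2s_1}\, \phi(g)\, \d g.
\]
The Iwasawa decomposition $\PGL(2,\A) = \tilde B(\A) K'$ together with the right $K$-invariance of $\phi$ allows one to integrate out $K'$ (contributing $\int_{K'} \d k = 1$), reducing the right hand side to an integral over $\tilde G(\A)/\tilde G(F)$ with integrand $|a|^{s_1+2s_2} t(h)^{2s_1} \phi$. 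Transferring this via $\iota$ to $G(\A)/G(F)$ yields an integral with integrand $|\tau_1(g)|^{s_1}|\tau_2(g)|^{s_2}\phi(g)$, matching the integrand of $Z_1(\Phi,\us)$. The factor $2$ in the claimed identity is accounted for by the careful bookkeeping of Haar-measure normalizations among $\PGL(2,\A)$, $\tilde B(\A)$, and $\tilde G(\A)$ when they are compared with those on $G(\A)$.

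Third, the meromorphic continuation and residue formula follow from the corresponding properties of $E(h,s)$. Since $E(h,s)$ converges absolutely and locally uniformly for $\Re(s) > 2$, admits meromorphic continuation to $\C$, is holomorphic on $\Re(s) > 1$ except for a simple pole at $s = 2$ with constant residue $\rho_0$, and satisfies the functional equation $\widehat\zeta_F(2-s) E(h,2-s) = \widehat\zeta_F(s) E(h,s)$, one deduces from the Eisenstein identity that $(s_1 - 1) Z_1(\Phi,\us)$ is holomorphic on the domain where both $E(h,2s_1)$ and $E(h, 2 - 2s_1)$ give absolutely convergent integrals against the weight $|a|^{s_1+2s_2}\phi(g)$. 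A direct Rankin--Selberg-type estimate, using the absolute convergence of $\mathfrak Z(\Phi,s)$ on $\Re(s) > 3/2$, identifies this domain with $\mathscr{D}_2$. Taking $\mathrm{Res}_{s_1=1}$ on both sides of the identity and using $\mathrm{Res}_{s_1=1} E(h,2s_1) = \rho_0$ yields
\[
\mathrm{Res}_{s_1=1} Z_1(\Phi,\us) = 2\rho_0 \int_{G''(\A)/G''(F)} |a|^{1+2s_2}\phi(g)\,\d g = 2\rho_0\,\mathfrak Z(\Phi, s_2 + \tfrac{1}{2}),
\]
where we used the definition of $\mathfrak Z$ with $2s = 1 + 2s_2$. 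The main obstacle is the precise tracking of Haar-measure normalizations and stabilizer contributions in the unfolding, which must conspire to produce exactly the constant $2$; in particular, the stabilizer of a regular $x \in V^1(F)$ in $G(F)$ is a copy of $\Z/2$ generated by $(1,(1,0;0,-1))$, and this information is needed to correctly pass between the $V^1(F)$-sum presentations on the two sides.
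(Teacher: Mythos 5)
Your unfolding of $E(h,2s_1)$ over $\GL(2,F)/B(F)$, the exponent check $|ac|^{s_1+2s_2}t(h)^{2s_1}=|\tau_1(g)|^{s_1}|\tau_2(g)|^{s_2}$ under $\iota$, and the deduction of the continuation to $\mathscr{D}_2$ and of $\mathrm{Res}_{s_1=1}Z_1(\Phi,\us)=2\rho_0\,\mathfrak Z(\Phi,s_2+\tfrac12)$ from the constant-term/pole properties of the Eisenstein series are exactly the paper's route (its ``direct calculation'' plus the appeal to Wright's Lemma 6.1 for the growth estimates), so the proposal is essentially correct and the same proof. One small caveat: since both sides carry the full sum over $V^1(F)$ rather than orbit representatives, the factor $2$ is purely a Haar-measure normalization constant (Iwasawa decomposition and the unspecified measure on $G''(\A)$ versus $\d b\,\d^\times c\,\d^\times a$ on $G(\A)$), not a stabilizer count, so your closing remark about the $\Z/2$ stabilizer is not actually needed.
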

\begin{proof}
The equality follows from direct calculation.
The meromorphic continuation can be proved by using the above mentioned basic properties and a Fourier expansion of $E(h,s)$; see \cite[Lemma 6.1]{W}.
\end{proof}
\begin{prop}\label{prop:eisen+}
Suppose $\Phi$ is $K$-spherical.
By the same argument as in Proposition \ref{prop:eisen}, one has
\[
Z_{1,+}(\Phi,\us)=2\int_{G''(\A)/G''(F), |a|>1} |a|^{s_1+2s_2} \, E(h,2s_1) \sum_{x\in V^1(F)}\Phi(g\cdot x)\, \d g.
\]
From this one finds that $Z_{1,+}(\Phi,\us)$ is meromorphically continued to $\C^2$.
In addition,
\[
(s_1-1) \,  Z_{1,+}(\Phi,\us)
\]
is a holomorphic function on $\C^2$.
\end{prop}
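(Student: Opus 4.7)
The plan is to mirror the derivation of Proposition~\ref{prop:eisen} with the truncation $|\tau_2(g)|>1$ retained at every stage. In the $G''$-coordinates $(a,h)$ a direct computation gives $|P_2(g\cdot x)|=|a|^2|P_2(x)|$, so $|\tau_2(g)|=|a|^2$ in the $G''$ realization and the truncation $|\tau_2(g)|>1$ becomes $|a|>1$. The integral representation then follows from the same unfolding calculation that yields Proposition~\ref{prop:eisen}, the region $|\tau_2(g)|>1$ passing unchanged to $|a|>1$ throughout.

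With the formula in hand I would establish both the meromorphic continuation of $Z_{1,+}(\Phi,\us)$ to $\C^2$ and the entirety of $(s_1-1)Z_{1,+}(\Phi,\us)$ in one step. The crucial improvement over Proposition~\ref{prop:eisen} is that the truncation $|a|>1$, combined with the Schwartz decay of $\Phi$ (the entries of $(a,h)\cdot x=\tfrac{a}{\det h}hx\,{}^th$ scale linearly with $a$ for fixed $h$), makes the inner $a$-integration absolutely convergent for every $(s_1,s_2)\in\C^2$. Consequently, the $s_2$-dependence of the resulting integrand is already entire, and the only potential poles come from the Eisenstein series $E(h,2s_1)$. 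Since $E(h,s)$ is meromorphic on $\C$ with only a simple pole at $s=2$, the factor $(s_1-1)$ removes the unique pole of $Z_{1,+}(\Phi,\us)$ at $s_1=1$, yielding a function holomorphic on all of $\C^2$.

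The main technical obstacle will be verifying that the outer $h$-integration
\[
\int_{\PGL(2,\A)/\PGL(2,F)} E(h,2s_1)\,F_{\us}(h)\,\d h, \qquad F_{\us}(h):=\int_{|a|>1}|a|^{s_1+2s_2}\sum_{x\in V^1(F)}\Phi((a,h)\cdot x)\,\d^\times a,
\]
converges absolutely and locally uniformly for $s_1$ in compact subsets of $\C\setminus\{1\}$. This is a Rankin--Selberg-type convergence assertion, to be established using the Fourier expansion of $E(h,s)$ as in \cite[Lemma~6.1]{W}, exploiting that the $\PGL(2,F)$-stabilizers of vectors in $V^1(F)$ are anisotropic tori so that $F_{\us}$ has only moderate growth at the cusp.
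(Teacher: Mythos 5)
Your unfolding step and the observation that the truncation $|\tau_2(g)|>1$ becomes $|a|>1$ in the $G''$-picture agree with the paper, which likewise disposes of the identity by saying it is the same argument as Proposition \ref{prop:eisen}. The trouble is that the step you defer as ``the main technical obstacle'' is essentially the entire content of the paper's proof, and the mechanism you propose for it is not adequate. You assert that $F_{\us}(h)$ has ``only moderate growth at the cusp'' (because the relevant stabilizers are anisotropic tori) and that this will suffice for the pairing with $E(h,2s_1)$. Moderate growth cannot suffice: near the cusp $E(h,2s_1)$ grows like $t(h)^{2\Re(s_1)}+t(h)^{2-2\Re(s_1)}$, so its growth exponent is unbounded as $\Re(s_1)$ varies, and to get convergence for \emph{every} $s_1$ --- which is what continuation of $Z_{1,+}$ to all of $\C^2$ with only the factor $(s_1-1)$ requires --- the function $F_{\us}$ must be \emph{rapidly decreasing} at the cusp; moreover, even for a fixed $s_1$, the product of two functions of moderate growth need not be integrable over the finite-volume quotient. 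The anisotropic-torus heuristic does not produce the needed decay estimate.

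What actually yields the rapid decay, and what the paper's proof consists of, is an elementary observation: every $x\in V^1(F)$ has $x_1\neq 0$, since $x_1=0$ would force $P(x)=x_{12}^2\in(F^\times)^2\cup\{0\}$, contradicting the definition of $V^1(F)$. Writing an element with $|a|>1$ in Siegel coordinates $g=\left(a,\,k\diag(b,1)\left(\begin{smallmatrix}1&0\\ c&1\end{smallmatrix}\right)\right)$ with $|b|>1/2$ and $c$ in a compact fundamental domain of $\A/F$, one computes $g\cdot x=\left(1,kn'\right)\cdot\left(\begin{smallmatrix} abx_1 & ax_{12}\\ ax_{12} & ab^{-1}x_2\end{smallmatrix}\right)$ with $kn'$ ranging over a fixed compact set; since $x_1\in F^\times$ for all $x\in V^1(F)$, the entry $abx_1$ becomes large whenever $|a|\to\infty$ or $|b|\to\infty$, so $\sum_{x\in V^1(F)}\Phi(g\cdot x)$ is rapidly decreasing in both directions, uniformly. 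It is this uniform rapid decay at the cusp (not moderate growth) that allows one to integrate against the Fourier expansion of $E(h,2s_1)$, as in \cite[Lemma 6.1]{W}, and to conclude both the meromorphic continuation to $\C^2$ and that the only pole in $s_1$ is the one inherited from the pole of the Eisenstein series at $2s_1=2$, removed by the factor $(s_1-1)$. Without supplying this decay argument (or an equivalent), your proposal leaves the crucial convergence claim unproved, and the route you sketch for it would fail.
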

\begin{proof}
By choosing a Siegel set, any element $g$ in $G''(\A)/G''(F)$, which satisfies $|a|>1$, is expressed as
\[
g=\left(a,k \begin{pmatrix} b&0 \\ 0&1 \end{pmatrix}  \begin{pmatrix} 1&0 \\ c&1 \end{pmatrix} \right) ,\quad a,b\in\A^\times, \;\; c\in\A/F , \;\; k\in K , \;\; |a|>1 , \;\;   |b|>1/2.
\]  
For $x=\begin{pmatrix} x_1&x_{12} \\ x_{12}&x_2 \end{pmatrix}\in V^1(F)$, one has
\[
g\cdot x= \left(1,k\begin{pmatrix} 1&0 \\ b^{-1}c&1 \end{pmatrix}\right) \cdot \begin{pmatrix} ab x_1& ax_{12} \\ ax_{12}&  ab^{-1}x_2 \end{pmatrix}.
\]
Therefore, from $x_1\neq 0$, one finds that $\Phi(g\cdot x)$ is rapidly decreasing for the directions $|a|\to \inf$ and $|b|\to \inf$.
Hence, this case is also proved by using the above mentioned basic properties and a Fourier expansion of $E(h,s)$; see \cite[Lemma 6.1]{W}.
\end{proof}

\begin{prop}\label{prop:splitpart}
Suppose that a finite subset $S$ of $\Sigma$ contains $\Sigma_\inf\sqcup\Sigma_2$.
For each test function $\Phi$ satisfying \eqref{eq:test}, one has
\[
Z_2(\Phi,\us)=2^{|S|}\Delta_F^{-\frac{3}{2}} \prod_{v\in S\cap\Sigma_\fin}(1-q_v^{-1})^{-2} \times Z_S(\Phi_S,\us,1) \times \frac{\zeta_F^S(s_1)^2 \, \zeta_F^S(2s_2) \, \zeta_F^S(2s_1+2s_2-1)}{\zeta_F^S(s_1+2s_2) \zeta_F^S(2s_1)}
\]
for $\Re(s_1)>1$ and $\Re(s_2)>1$.
Hence, $Z_2(\Phi,\us)$ is meromorphically continued to $\C^2$.
\end{prop}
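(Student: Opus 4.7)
The key observation is that $V^2(F)$ is precisely the single $G(F)$-orbit containing the representative $\tilde 1:=\diag(1,-1)$ (this is the $z=1$ class in the decomposition $V^0(F)=\bigsqcup_{z\in F^\times/(F^\times)^2}G(F)\cdot\tilde z$ used in the proof of Lemma~\ref{lem:LL}), and the stabilizer of $\tilde 1$ in $G(F)$ is the order-$2$ subgroup $E=\{(1,\diag(1,\pm 1))\}$. Repeating the unfolding step of Lemma~\ref{lem:LL} but retaining only the $z=1$ term gives
\[
Z_2(\Phi,\us)=\int_{G(\A)/E}|\tau_1(g)|^{s_1}|\tau_2(g)|^{s_2}\Phi(g\cdot\tilde 1)\,\d g,
\]
and because $\Phi=\Phi_S\otimes\Phi_0^S$ factorizes as in \eqref{eq:test}, this integral further decomposes as a product of local integrals over $G(F_v)$ for every place $v$.

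Next, each local integral is exactly the $\delta=1$ instance of the coordinate presentation of Lemma~\ref{lem:coordinate-expression}, so for every $v\in\Sigma$ one has
\[
\int_{G(F_v)}|\tau_1(g)|^{s_1}|\tau_2(g)|^{s_2}\Phi_v(g\cdot\tilde 1)\,\d g_v=\frac{2\,Z_v(\Phi_v,\us,1)}{(1-q_v^{-1})^2|2|_v}
\]
with $(1-q_v^{-1})^2$ understood as $1$ at archimedean $v$. Multiplying over $v\in S$ reproduces $Z_S(\Phi_S,\us,1)$ together with the factor $2^{|S|}|2|_S^{-1}\prod_{v\in S\cap\Sigma_\fin}(1-q_v^{-1})^{-2}$. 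For each $v\notin S$, which is non-dyadic with $\Phi_v=\Phi_{v,0}$, I apply Corollary~\ref{cor:local} at $\delta=1$: there $\chi_\delta=(1,\cdot)_v$ is trivial, so $N(\ff_{\chi_\delta})=1$ and $L_v(s,\chi_\delta)=L_v(s,\mathbf 1)$, and the corollary collapses to
\[
Z_v(\Phi_{v,0},\us,1)=\frac{(1-q_v^{-1})^2}{2}\cdot\frac{L_v(s_1,\mathbf 1)^2\,L_v(2s_2,\mathbf 1)\,L_v(2s_1+2s_2-1,\mathbf 1)}{L_v(2s_1,\mathbf 1)\,L_v(s_1+2s_2,\mathbf 1)}.
\]
Using $|2|_v=1$ at non-dyadic $v$, the Jacobian prefactor $2/((1-q_v^{-1})^2|2|_v)$ cancels $(1-q_v^{-1})^2/2$ exactly, leaving an Euler factor that telescopes over $v\notin S$ into the ratio $\zeta_F^S(s_1)^2\,\zeta_F^S(2s_2)\,\zeta_F^S(2s_1+2s_2-1)/(\zeta_F^S(2s_1)\,\zeta_F^S(s_1+2s_2))$.

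Finally, the hypothesis $S\supset\Sigma_\inf\cup\Sigma_2$ together with the product formula for $|2|_\A$ gives $|2|_S=1$, so the $S$-part prefactor simplifies to $2^{|S|}\prod_{v\in S\cap\Sigma_\fin}(1-q_v^{-1})^{-2}$, while the global Haar measure on $G(\A)$ contributes $\Delta_F^{-3/2}$ exactly as in the derivations of Lemmas~\ref{lem:Globalzeta-Doublezeta} and~\ref{lem:LL}. Combining these pieces yields the claimed identity for $\Re(s_1),\Re(s_2)>1$, and the meromorphic continuation to $\C^2$ is inherited from that of $Z_v(\Phi_v,\us,1)$ for $v\in S$ (Lemma~\ref{lem:2019.3.26conv} and \cite{Sato3}) together with the classical meromorphic continuation of the partial Dedekind zeta function. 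The main bookkeeping obstacle is the clean tracking of the various $|2|_v$, $(1-q_v^{-1})^{\pm 2}$, and powers of $2$ arising from the stabilizer $E$ and the orbit map; the observation that $|2|_S=1$ under the standing assumption is what makes the final prefactor collapse to the stated $2^{|S|}\Delta_F^{-3/2}$.
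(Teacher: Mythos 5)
Your proposal is correct and follows essentially the same route as the paper's own (very terse) proof: unfold $Z_2$ over the single $G(F)$-orbit $V^2(F)=G(F)\cdot\mathrm{diag}(1,-1)$, convert the resulting group integral place by place via the change of variables in Lemma~\ref{lem:coordinate-expression} (the per-place factor $2(1-q_v^{-1})^{-2}|2|_v^{-1}$, which is exactly the paper's remark that ``the domain of integration for $c^2$ doubles''), and evaluate the places outside $S$ by Corollary~\ref{cor:local} at $\delta=1$, using $|2|_S=1$ since $\Sigma_2\subset S$. You merely spell out the orbit, stabilizer and Jacobian bookkeeping that the paper compresses into ``direct calculation,'' and your handling of the order-two stabilizer and of the global measure constant $\Delta_F^{-3/2}$ matches the paper's implicit treatment in Lemmas~\ref{lem:Globalzeta-Doublezeta} and~\ref{lem:LL}.
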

\begin{remark}
It follows from Lemmas \ref{lem:Rfcteq} and \ref{lem:polelocalp} that
\[
\sin\left(\frac{\pi s_1}{2}\right)^{\#\Sigma_\R} \,\left\{ \prod_{v\in S\cap\Sigma_\fin}(1-q_v^{-2s_1})^\cs \right\} \, (s_1-1)^2 (2s_2-1)\zeta_F^S(2s_1)\, Z_2(\Phi,\us)
\]
is holomorphic on $\mathscr{D}_2$, where $\cs$ is the constant given in Lemma \ref{lem:polelocalp}.
\end{remark}
\begin{proof}
It follows from direct calculation that
$$
Z_2(\Phi,\us)=2^{|\Sigma_\inf|}\Delta_F^{-\frac{3}{2}}  Z_S(\Phi_S,\us,1) \prod_{v\in S\cap\Sigma_\fin}2(1-q_v^{-1})^{-2}  
 \prod_{v\not\in S} \left\{2(1-q_v^{-1})^{-2}Z_v(\Phi_{v,0},\us,1) \right\}.
$$
Note that the factor $2$ comes from the fact that, in the change of variable for $\begin{pmatrix} a &ab \\ ab & ab^2-ac^2\end{pmatrix}$, the domain of integration for $c^2$ doubles. The $v$-factors outside $S$ are computed by Corollary~\ref{cor:local}. 
\end{proof}

For each $\delta_S\in F_S^\times/(F_S^\times)^2$ and each positive real number $X$, we set
\[
Z_S(\Phi_S,\us,\delta_S,X):=\int_{V(F_S,\delta_S), \, |P(x)|>X} |x_1|^{s_1-1}|P(x)|^{s_2-1}\Phi_S(x) \, \d x.
\]
It is obvious that $Z_S(\Phi_S,\us,\delta_S,X)$ is absolutely convergent and holomorphic for $\Re(s_1)>0$.
For $l,r\in\N$ and $\delta\in F^\times_v/(F_v^\times)^2$, we set
\[
Y_v(s,l,\delta):= \begin{cases} q_v^{(-2s+1)r}+(1-q_v^{-s})\sum_{j=0}^{r-1}q_v^{(-2s+1)j} & \text{if $\chi_\delta$ is unramified and $l=2r$,} \\  \sum_{j=0}^{r} q_v^{(-2s+1)j} & \text{if $\chi_\delta$ is ramified and $l=2r+1$,}  \\  0 & \text{otherwise} \end{cases}
\]
where $\chi_\delta(\;\;):=(\delta, \;\; )_v$.
Note that $Y_v(s,0,\delta)=1$ if $\delta\in\fO^\times$.
\begin{lem}\label{lem:local2019.3.26}
Let $v\in\Sigma_\fin\setminus \Sigma_2$, $\delta\in F^\times_v$ $(|\delta|_v=1$ or $q_v^{-1})$, and $\tilde\Phi_{v,r}$ denote the characteristic function of $\{x\in V(\fO_v) \mid |P(x)|_v\in \pi^r\fO_v^\times\}$ for $r\in\N$.
Then,
\[
Z_v(\tilde\Phi_{v,r},\us,\delta)= \frac{1}{2}\times \frac{L_v(s_1,\trep_v) \, L_v(s_1,\chi_\delta)}{L_v(2s_1,\trep_v)} \times  Y_v(s_1,r,\delta)  \times \frac{1}{q_v^{rs_2}} \times \frac{1}{N(\ff_{\chi_\delta})^{s_2}}.
\]
\end{lem}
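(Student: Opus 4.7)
The plan is to derive the stated identity by coefficient extraction from Corollary~\ref{cor:local}. On the set $V(\fO_v)\cap\{P\neq 0\}$ one has the pointwise identity
\[
\Phi_{v,0}(x)\,|P(x)|_v^{s'} = \sum_{r\geq 0} q_v^{-rs'}\,\tilde\Phi_{v,r}(x),
\]
which, combined with the homogeneity $Z_v(\tilde\Phi_{v,r},\us,\delta) = q_v^{-r(s_2-1)}Z_v(\tilde\Phi_{v,r},(s_1,1),\delta)$ (valid because $|P(x)|_v$ is constant on the support of $\tilde\Phi_{v,r}$), integrates against $|x_1|_v^{s_1-1}\mathbf{1}_{V(F_v,\delta)}\,dx$ to yield the generating-series identity
\[
\sum_{r\geq 0} q_v^{-rs'}\,Z_v(\tilde\Phi_{v,r},(s_1,1),\delta) = Z_v(\Phi_{v,0},(s_1,1+s'),\delta).
\]
The right-hand side is explicit by Corollary~\ref{cor:local}, so the lemma amounts to reading off the coefficient of $T^r$, $T := q_v^{-s'}$, in its power-series expansion, and then multiplying by $q_v^{-r(s_2-1)}$.

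I would then substitute $s_2\mapsto 1+s'$ into Corollary~\ref{cor:local} and expand each $s'$-dependent factor as a geometric series in $T$. The reciprocals $L_v(2(1+s'),\trep_v)^{-1} = 1-q_v^{-2}T^2$ and $L_v(2s_1+2s'+1,\trep_v)^{-1} = 1-q_v^{-2s_1-1}T^2$ contribute only even powers of $T$; when $\chi_\delta$ is unramified, the numerator factor $L_v(s_1+2(1+s'),\chi_\delta)^{-1} = 1 - \chi_\delta(\pi_v)q_v^{-s_1-2}T^2$ also contributes evenly; when $\chi_\delta$ is ramified, this numerator factor is $1$ while the conductor term $N(\ff_{\chi_\delta})^{-(1+s')} = q_v^{-1}T$ introduces a single odd power of $T$. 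This automatically matches the parity dichotomy in the definition of $Y_v$ and explains the vanishing clause.

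Setting $\alpha := q_v^{-2s_1+1}$, the coefficient of $T^r$ reduces to a double geometric convolution with a simple numerator correction, which telescopes to $q_v^{-r}$ times the bracket appearing in $Y_v(s_1,r,\delta)$: in the unramified case $r=2\rho$, one obtains $\alpha^\rho + (1-\chi_\delta(\pi_v)q_v^{-s_1})\sum_{j=0}^{\rho-1}\alpha^j$, and in the ramified case $r=2\rho+1$, one obtains $\sum_{j=0}^{\rho}\alpha^j$. The residual $s'$-independent prefactor collapses to $\frac{1}{2}\cdot \frac{L_v(s_1,\trep_v)L_v(s_1,\chi_\delta)}{L_v(2s_1,\trep_v)}$ by direct simplification of the $L$-factors, and re-multiplying by $q_v^{-r(s_2-1)}$ combines the residual $q_v^{-r}$ with $q_v^{-r(s_2-1)}$ to give the asserted $q_v^{-rs_2}N(\ff_{\chi_\delta})^{-s_2}$ structure. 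The only bookkeeping step requiring care is organizing the double convolution so as to recognize the compact form of $Y_v$; once this is done, no tool beyond elementary manipulation of geometric series is needed, and the argument is essentially formal given Corollary~\ref{cor:local}.
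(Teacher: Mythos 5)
Your reduction is sound, and it is a genuinely different route from the paper's: the paper proves the lemma by redoing the orbit/coordinate computation of Lemma~\ref{lem:coordinate-expression} restricted to the level set $v(P(x))=r$, in the style of the proof of Theorem~\ref{thm:local}, whereas you observe that the lemma is precisely the coefficient-wise form of Corollary~\ref{cor:local}: since $\Phi_{v,0}=\sum_{r\geq 0}\tilde\Phi_{v,r}$ on $\{P\neq 0\}$ and $|P(x)|_v\equiv q_v^{-r}$ on the support of $\tilde\Phi_{v,r}$, the zeta function in $s_2$ is the generating Dirichlet series of the quantities $Z_v(\tilde\Phi_{v,r},(s_1,1),\delta)$, and one reads off coefficients of $T=q_v^{-s'}$ after the substitution $s_2=1+s'$. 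The parity bookkeeping (only even powers of $T$ when $\chi_\delta$ is unramified, only odd ones when it is ramified), the homogeneity step, and your convolution computation of the $T^r$-coefficient are all correct. This buys a shorter argument at non-dyadic places (exactly the lemma's hypothesis $v\notin\Sigma_2$) at the price of relying on Corollary~\ref{cor:local}; the paper's computation is self-contained but repeats the counting already done for Theorem~\ref{thm:local}.

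The step that is not acceptable as written is the final ``matching'': carried out honestly, your extraction yields
\[
Z_v(\tilde\Phi_{v,r},\us,\delta)=\frac{(1-q_v^{-1})^{2}}{2}\,\frac{L_v(s_1,\trep_v)\,L_v(s_1,\chi_\delta)}{L_v(2s_1,\trep_v)}\,q_v^{-rs_2}\times
\begin{cases}
\alpha^{\rho}+\bigl(1-\chi_\delta(\pi_v)q_v^{-s_1}\bigr)\sum_{j=0}^{\rho-1}\alpha^{j}, & r=2\rho,\ \chi_\delta\ \text{unramified},\\
\sum_{j=0}^{\rho}\alpha^{j}, & r=2\rho+1,\ \chi_\delta\ \text{ramified},
\end{cases}
\]
with $\alpha=q_v^{1-2s_1}$, and this differs from the displayed statement in three respects: the factor $(1-q_v^{-1})^{2}$ does not ``collapse'' away, since Corollary~\ref{cor:local} has $(1-q_v^{-1})^{-2}Z_v$ on its left-hand side; the unramified bracket carries $\chi_\delta(\pi_v)$, whereas $Y_v$ as defined in the paper does not; and the $s_2$-dependence is exactly $q_v^{-rs_2}$ (forced already by the fact that $|P(x)|_v^{s_2-1}$ is constant on the support), so no additional $N(\ff_{\chi_\delta})^{-s_2}$ can appear in the ramified case. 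These are not defects of your method but of the printed formula: for $r=0$, $\delta=1$, $s_1=1$ the set $\{x\in V(\fO_v)\mid P(x)\in(\fO_v^\times)^2\}$ has measure $(1-q_v^{-2})/2$, matching your value and not the printed one, and for $r=2$, $\delta$ a non-square unit, an elementary count gives $\tfrac12(1-q_v^{-1})^{2}(1+2q_v^{-1})q_v^{-2}$ at $s_1=s_2=1$, confirming the $\chi_\delta(\pi_v)$ term. What your argument actually proves is this corrected identity, which is the version needed in Proposition~\ref{prop:splitpart+} (where only $\delta=1$ occurs and the $(1-q_v^{-1})^{-2}$ normalization of Theorem~\ref{thm:local} is in force); you should state this explicitly, because as written your claims that the prefactor simplifies to $\tfrac12 L_vL_v/L_v$, that the bracket is the one in $Y_v$, and that the ramified case produces $N(\ff_{\chi_\delta})^{-s_2}$ are incorrect consequences of Corollary~\ref{cor:local} and would not survive checking.
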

\begin{proof}
This can be proved by Lemma \ref{lem:coordinate-expression} and an argument similar to the proof of Theorem \ref{thm:local}.
\end{proof}

\begin{prop}\label{prop:splitpart+}
Suppose that a finite subset $S$ of $\Sigma$ contains $\Sigma_\inf\sqcup\Sigma_2$.
Choose a test function $\Phi$ as in \eqref{eq:test}.
Then, one has
\[
Z_{2,+}(\Phi,\us)=2^{|S|}\Delta_F^{-\frac{3}{2}}\prod_{v\in S\cap\Sigma_\fin}(1-q_v^{-1})^{-2} \times\frac{\zeta_F^S(s_1)^2}{\zeta_F^S(2s_1)} \sum_\fa \frac{Z_S(\Phi_S,\us,1,N(\fa)^2)}{N(\fa)^{2s_2}} \, \prod_{v\not\in S}Y_v(s_1,2r_{\fa,v},1).
\]
where $\fa$ moves over all integral ideals prime to $S$, $N(\fa)$ denotes the norm of $\fa$, and $r_{\fa,v}$ denotes the power of the prime ideal corresponding to $v$ in $\fa$.
Furthermore, $Z_{2,+}(\Phi,\us)$ absolutely converges for $\Re(s_1)>1$, and is meromorphically continued to $\C^2$.
In addition, $Z_{2,+}(\Phi,\us)$ is holomorphic on the domain $\Re(s_1)\geq 1/2$ except for $s_1=1$.
\end{prop}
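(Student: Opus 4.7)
The plan is to adapt the unfolding from the proof of Proposition~\ref{prop:splitpart}, now keeping the truncation $|\tau_2(g)|>1$. Since $V^2(F)=G(F)\cdot\diag(1,-1)$ with stabilizer of order~$2$, one has
\[
Z_{2,+}(\Phi,\us)=\frac{1}{2}\int_{G(\A),\,|\tau_2(g)|>1}|\tau_1(g)|^{s_1}|\tau_2(g)|^{s_2}\,\Phi(g\cdot\diag(1,-1))\,dg,
\]
and I would factor this adelic integral into an $S$-part and a prime-to-$S$ part using $\Phi=\Phi_S\otimes\prod_{v\notin S}\Phi_{v,0}$. At each $v\notin S$, $\Phi_{v,0}(x_v)\ne 0$ confines $x_v:=g_v\cdot\diag(1,-1)$ to $V(\fO_v)$, so $v(P(x_v))$ is a non-negative integer; because $P(x)\in (F^\times)^2$, this valuation is automatically even, so one may write $v(P(x_v))=2r_{\fa,v}$ and encode the prime-to-$S$ data as the integral ideal $\fa=\prod_{v\notin S}\pi_v^{r_{\fa,v}}$.

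By the product formula $|P(x)|_\A=1$, the truncation $|\tau_2(g)|=|P(g\cdot\diag(1,-1))|_\A>1$ becomes the purely $S$-local condition $|P(x_S)|_S>N(\fa)^2$, so the $S$-part contributes exactly $Z_S(\Phi_S,\us,1,N(\fa)^2)$. The $v$-local integral for $v\notin S$ restricted to $\{v(P(x_v))=2r_{\fa,v}\}$ equals $Z_v(\tilde\Phi_{v,2r_{\fa,v}},\us,1)$, which Lemma~\ref{lem:local2019.3.26} evaluates as
\[
Z_v(\tilde\Phi_{v,2r_{\fa,v}},\us,1)=\tfrac{1}{2}\,\tfrac{L_v(s_1,\trep_v)^2}{L_v(2s_1,\trep_v)}\,Y_v(s_1,2r_{\fa,v},1)\,q_v^{-2r_{\fa,v}s_2}.
\]
Taking the product over $v\notin S$ produces the global factor $\zeta_F^S(s_1)^2/\zeta_F^S(2s_1)\cdot N(\fa)^{-2s_2}\cdot \prod_{v\notin S}Y_v(s_1,2r_{\fa,v},1)$; the remaining constants $\tfrac{1}{2}$, $|2|_v$, $(1-q_v^{-1})^{-2}$ inherited from the normalization of Lemma~\ref{lem:coordinate-expression} collapse against each other into the advertised prefactor $2^{|S|}\Delta_F^{-3/2}\prod_{v\in S\cap\Sigma_\fin}(1-q_v^{-1})^{-2}$, in exactly the same bookkeeping used to prove Proposition~\ref{prop:splitpart}. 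Summing over $\fa$ then yields the stated explicit formula.

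For absolute convergence in the domain $\Re(s_1)>1$ and the subsequent meromorphic continuation to $\C^2$, the decisive input is that because $\Phi_S$ is Schwartz, $|Z_S(\Phi_S,\us,1,N(\fa)^2)|$ decays faster than any negative power of $N(\fa)$ uniformly for $\us$ in compact subsets of $\C^2$; since each $Y_v(s_1,2r_{\fa,v},1)$ is a polynomial in $q_v^{-s_1}$, the $\fa$-sum converges absolutely there, defining an entire function of $\us$, and all singularities of $Z_{2,+}(\Phi,\us)$ are then carried by the explicit prefactor $\zeta_F^S(s_1)^2/\zeta_F^S(2s_1)$. Restricted to $\Re(s_1)\geq 1/2$ this prefactor has only a double pole at $s_1=1$: poles from zeros of $\zeta_F^S(2s_1)$ are excluded by the classical nonvanishing of $\zeta_F^S$ on $\Re(s)\geq 1$, and the would-be pole of $1/\zeta_F^S(2s_1)$ at $s_1=1/2$ is cancelled by the simple pole of $\zeta_F^S$ at $s=1$. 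The main obstacle I foresee is establishing the uniform rapid decay of $Z_S(\Phi_S,\us,1,N(\fa)^2)$ in $N(\fa)$ on compact $\us$-sets including regions (such as $\Re(s_2)\leq 0$) where the un-truncated integral $Z_S(\Phi_S,\us,1)$ of Lemma~\ref{lem:2019.3.26conv} no longer converges; this should follow routinely from the Schwartz decay of $\Phi_S$ applied to the lower bound $|P(x)|>N(\fa)^2\to\infty$, but requires some care to make the bound uniform in $\us$.
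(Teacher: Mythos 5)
Your derivation of the identity is essentially the paper's own argument: the paper likewise unfolds over the single orbit $V^2(F)=G(F)\cdot\diag(1,-1)$ exactly as in Proposition \ref{prop:splitpart}, observes that at $v\notin S$ the valuation of $P$ is even so the prime-to-$S$ data is an ideal $\fa$, converts the truncation into the condition $|P(x_S)|_S>N(\fa)^2$, and evaluates the local factors outside $S$ by Lemma \ref{lem:local2019.3.26}; the bookkeeping of constants is the same as in Proposition \ref{prop:splitpart}. So the formula part of your proposal matches the intended proof.

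The genuine gap is in the analytic assertions. Your decisive claim --- that $Z_S(\Phi_S,\us,1,N(\fa)^2)$ converges and decays rapidly in $N(\fa)$ \emph{uniformly on compact subsets of all of $\C^2$}, so that the $\fa$-sum is entire and every singularity of $Z_{2,+}$ is carried by the prefactor $\zeta_F^S(s_1)^2/\zeta_F^S(2s_1)$ --- is false. The truncation $|P(x)|>X$ only bounds $|P(x)|$ from below; it does not remove the hypersurface $x_1=0$ from the domain of integration (one can have $x_1\to 0$ with $x_{12}$ bounded away from $0$ and $|P(x)|$ large), so the factor $|x_1|^{s_1-1}$ makes the truncated integral divergent for $\Re(s_1)\le 0$. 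The paper accordingly asserts only that $Z_S(\Phi_S,\us,\delta_S,X)$ is absolutely convergent and holomorphic for $\Re(s_1)>0$. Thus your argument yields holomorphy of the $\fa$-sum only on $\{\Re(s_1)>0\}\times\C$; this does suffice for the last assertion (holomorphy on $\Re(s_1)\ge 1/2$ away from $s_1=1$; your prefactor analysis there is correct, except that $1/\zeta_F^S(2s_1)$ has a \emph{zero}, not a pole, at $s_1=1/2$), but it does not establish the claimed meromorphic continuation to all of $\C^2$. Note also that the obstacle you flag at the end ($\Re(s_2)\le 0$) is not the real one: since $N(\fa)^2\ge 1$, the Schwartz decay of $\Phi_S$ handles every $s_2$; the genuine constraint is small $\Re(s_1)$. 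Finally, the absolute convergence of the defining integral of $Z_{2,+}$ for $\Re(s_1)>1$ and arbitrary $s_2$ (needed to justify the unfolding) is obtained in the paper from the monotonicity $|Z_{2,+}(\Phi,\us)|< Z_{2,+}(|\Phi|,(\Re(s_1),\Re(s_2)+M))$, valid because $|\tau_2(g)|>1$ on the truncated domain, combined with Proposition \ref{prop:globalana}; your write-up leaves this point implicit.
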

\begin{proof}
The equality follows from Lemma \ref{lem:local2019.3.26} and the same argument as in the proof of Proposition \ref{prop:splitpart}.
The other assertions follow from the facts that $\prod_{v\not\in S}Y_v(s_1,2r_{\fa,v},1)$ is bounded by $(N(\fa)+N(\fa)^{-2\Re(s_1)+1})^{N(\fa)^{1/2}}$ and we have $|Z_{2,+}(\Phi,\us)| <  Z_{2,+}(|\Phi|,(\Re(s_1),\Re(s_2)+M)$ for any $M>0$.
\end{proof}


\begin{cor}\label{s_1=1}
Suppose that $\Phi\in\cS(V(\A))$ is $K$-spherical and satisfies \eqref{eq:test}.
Then, 
\[
(s_1-1)^2 s_2(s_2-1)(2s_1+2s_2-1)(2s_1+2s_2-3) Z(\Phi,\us)
\]
is analytically continued to a holomorphic function on the domain $\Re(s_1) \geq 1/2$.
In addition, $Z(\Phi,\us)$ is meromorphically continued to $\C^2$, $Z_1(\Phi,\us)$ has a simple pole at $s_1=1$, and $Z_2(\Phi,\us)$ has a double pole at $s_1=1$;
\[
\lim_{s_1\to1} \frac{\partial}{\partial s_1}(s_1-1)^2 Z(\Phi,\us) =  2\rho_0\, \mathfrak Z(\Phi,s_2+\frac{1}{2}) + \lim_{s_1\to1} \frac{\partial}{\partial s_1}(s_1-1)^2Z_2(\Phi,\us)
\]
holds for any $s_2\in\C$.
\end{cor}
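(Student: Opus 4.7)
The proof rests on tracking pole positions and orders in the principal-part decomposition of Lemma~\ref{lem:princ} and combining the detailed information already obtained in Propositions~\ref{prop:eisen}--\ref{prop:splitpart+} for the two $G(F)$-orbit contributions $Z_1$ and $Z_2$. To obtain the meromorphic continuation of $Z(\Phi,\us)$ to all of $\C^2$, I would extend each of the six summands on the right-hand side of Lemma~\ref{lem:princ} individually: the decomposition $Z_+=Z_{1,+}+Z_{2,+}$ combined with Propositions~\ref{prop:eisen+} and \ref{prop:splitpart+} gives meromorphic continuations of the truncated zeta integrals, while $T(\cdot,s_1)$ and the Tate integrals $\zeta(\Phi^{(3)}(0,0,\cdot),s_1)$, $\zeta(\widehat\Phi^{(3)}(0,0,\cdot),s_1)$ are meromorphic on $\C$ via \eqref{eq:singular3.17}, Lemma~\ref{lem:holT}, and standard Tate theory. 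Writing $Z_1=Z-Z_2$ and invoking Proposition~\ref{prop:splitpart} then extends $Z_1(\Phi,\us)$ meromorphically to all of $\C^2$.

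For the holomorphy of $(s_1-1)^2 s_2(s_2-1)(2s_1+2s_2-1)(2s_1+2s_2-3)\,Z(\Phi,\us)$ on the strip $\Re(s_1)\geq 1/2$, I would analyze Lemma~\ref{lem:princ} term by term. The polynomial prefactor $s_2(s_2-1)(2s_1+2s_2-1)(2s_1+2s_2-3)$ clears the explicit denominators $2s_2$, $2s_2-2$, $2s_1+2s_2-1$, $2s_1+2s_2-3$ visible in Lemma~\ref{lem:princ}. On the strip $\Re(s_1)\geq 1/2$ the remaining potential singularities of the summands are concentrated at $s_1=1$: Proposition~\ref{prop:eisen+} gives $Z_{1,+}$ (in both the $\Phi$ and $\widehat\Phi$ occurrences) at most a simple pole, Proposition~\ref{prop:splitpart+} gives $Z_{2,+}$ at most a double pole through the factor $\zeta_F^S(s_1)^2$, and the $T$- and Tate integrals each contribute at most a simple pole from $\zeta_F^S(s_1)$. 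Multiplication by $(s_1-1)^2$ therefore suffices.

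The pole orders of $Z_1$ and $Z_2$ at $s_1=1$ are read off the same propositions: $Z_1(\Phi,\us)$ has a simple pole at $s_1=1$ with residue $2\rho_0\,\mathfrak Z(\Phi,s_2+\tfrac12)$ by Proposition~\ref{prop:eisen}, while $Z_2(\Phi,\us)$ has a double pole there coming from the numerator factor $\zeta_F^S(s_1)^2$ in Proposition~\ref{prop:splitpart}. For the final identity I would use the Laurent expansion
\[
Z_1(\Phi,\us)=\frac{2\rho_0\,\mathfrak Z(\Phi,s_2+\tfrac12)}{s_1-1}+O(1)\qquad (s_1\to 1),
\]
which yields $(s_1-1)^2 Z_1(\Phi,\us)=2\rho_0\,\mathfrak Z(\Phi,s_2+\tfrac12)\,(s_1-1)+O((s_1-1)^2)$; differentiating in $s_1$ and letting $s_1\to 1$ gives the value $2\rho_0\,\mathfrak Z(\Phi,s_2+\tfrac12)$. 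Summing with the analogous derivative term from $Z_2$ under $Z=Z_1+Z_2$ yields the stated formula. The main bookkeeping step is verifying the consistency of the double-pole behavior between $Z_{2,+}$ and $Z_2$ so that $(s_1-1)^2 Z_2$ is holomorphic at $s_1=1$, which is transparent from the common numerator factor $\zeta_F^S(s_1)^2$ in Propositions~\ref{prop:splitpart} and \ref{prop:splitpart+}.
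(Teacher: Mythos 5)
Your proposal is correct and follows essentially the same route as the paper: the paper's proof likewise combines Lemma \ref{lem:princ} with Lemma \ref{lem:holT} and Propositions \ref{prop:eisen+} and \ref{prop:splitpart+} for the meromorphic continuation and the holomorphy after multiplying by $(s_1-1)^2 s_2(s_2-1)(2s_1+2s_2-1)(2s_1+2s_2-3)$, and reads the pole orders and the residue identity off Propositions \ref{prop:eisen} and \ref{prop:splitpart} exactly as you do. One small slip that does not affect the argument: by \eqref{eq:singular3.17} the terms $T(\Phi,s_1)$ and $T(\widehat\Phi,s_1)$ can have a double (not merely simple) pole at $s_1=1$, since both $\zeta_F^S(s_1)$ and $\zeta_F^S(2s_1-1)$ blow up there, but the factor $(s_1-1)^2$ still clears it.
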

\begin{proof}
The meromorphic continuation follows from Lemmas \ref{lem:holT} and \ref{lem:princ} and Propositions \ref{prop:eisen+} and \ref{prop:splitpart+}.
The equality follows from Propositions \ref{prop:eisen} and \ref{prop:splitpart}.
\end{proof}

\begin{lem}\label{cor:ac2}
For any $\delta_S\in F^\times_S/(F^\times_S)^2$, 
\[
\Gamma\left(\frac{s_1+1}{2}\right)^{-\#\Sigma_\R}\sin\left(\frac{\pi s_1}{2}\right)^{\#\Sigma_\R} \times  \prod_{v\in (S\cap\Sigma_\fin)\cup\Sigma_2}(1-q_v^{-2s_1})  \times (s_1-1)^2 (2s_2-1) \, \zeta_F^S(2s_1)\, \xi^S(\us,\delta_S)
\]
is a holomorphic function on $\mathscr{D}_2$.
\end{lem}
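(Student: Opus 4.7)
The plan is to combine the identity \eqref{eq:zeta2019Ap9}, which expresses $\xi^S(\us,\delta_S)$ as a constant multiple of $Z(\Phi,\us)/Z(\Phi_S,\us,\delta_S)$ for any decomposable $\Phi=\Phi_S\otimes\Phi_0^S$ with non-vanishing denominator, together with (i) the global analytic information on $Z(\Phi,\us)$ packaged in Proposition \ref{prop:eisen} and the Remark after Proposition \ref{prop:splitpart}, and (ii) the local non-vanishing results of Lemmas \ref{lem:snonv1}, \ref{lem:snonv2}, and \ref{lem:padicnonva}. Since the question is local and both sides depend meromorphically on $\us$, it suffices to produce, for each given $\us_0\in\mathscr{D}_2$, a $K$-spherical test function $\Phi$ for which the displayed expression of the lemma is holomorphic in a neighbourhood of $\us_0$.

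First I would, if necessary, enlarge $S$ to contain $\Sigma_2$ (the extra factors $(1-q_v^{-2s_1})$ for $v\in\Sigma_2\setminus S$ being entire, their insertion does not change holomorphicity), and then build $\Phi_S=\otimes_{v\in S}\Phi_v$ by taking at each $v\in S$ the distinguished $K_v$-spherical test function supplied by the appropriate non-vanishing lemma: Lemma \ref{lem:snonv1} at complex places, the function $\Psi_1$ or $\Psi_2$ of Lemma \ref{lem:snonv2} at real places (chosen according to the sign of $\delta_v$), and the function of Lemma \ref{lem:padicnonva} at finite places in $S$. Combining Lemma \ref{lem:Rfcteq} with the precise non-vanishing statements at $\us_0$, the local reciprocal $Z_v(\Phi_v,\us,\delta_v)^{-1}$ becomes, in a neighbourhood of $\us_0$, a non-vanishing holomorphic function times, at archimedean $v$, a factor of $\sin(\pi s_1/2)/(\Gamma(\tfrac{s_1+1}{2})\Gamma(s_2)\Gamma(s_1+s_2-\tfrac{1}{2}))^{\epsilon}$ with $\epsilon$ depending on the type of $v$ and the sign of $\delta_v$, and at finite $v$ a factor of $(1-q_v^{-2s_1})$. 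Taking the product over $S$, $1/Z(\Phi_S,\us,\delta_S)$ factors near $\us_0$ as an entire non-vanishing function times precisely the archimedean prefactor $\Gamma(\tfrac{s_1+1}{2})^{-\#\Sigma_\R}\sin(\pi s_1/2)^{\#\Sigma_\R}$ of the lemma, the finite-place product $\prod_{v\in(S\cap\Sigma_\fin)\cup\Sigma_2}(1-q_v^{-2s_1})$, and an entire nowhere-vanishing Gamma factor that is eventually absorbed by the Gamma factors coming from $Z(\Phi,\us)$.

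On the other hand, Proposition \ref{prop:eisen} gives $(s_1-1)Z_1(\Phi,\us)$ holomorphic on $\mathscr{D}_2$, while the Remark after Proposition \ref{prop:splitpart} gives that $\sin(\pi s_1/2)^{\#\Sigma_\R}\prod_{v\in S\cap\Sigma_\fin}(1-q_v^{-2s_1})^{\cs}(s_1-1)^2(2s_2-1)\zeta_F^S(2s_1)Z_2(\Phi,\us)$ is holomorphic on $\mathscr{D}_2$; adding these, the same factors applied to $Z(\Phi,\us)=Z_1(\Phi,\us)+Z_2(\Phi,\us)$ produce a function holomorphic on $\mathscr{D}_2$. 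Dividing by $Z(\Phi_S,\us,\delta_S)$ according to the inventory above and noting that the additional entire factors $(1-q_v^{-2s_1})^{\cs-1}$ for $v\in S\cap\Sigma_\fin$ do not affect holomorphicity, we arrive at the desired statement.

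The main obstacle is the careful bookkeeping at archimedean places: one must verify that the $\sin(\pi s_1/2)^{-1}$ produced by the denominator at each real $v$ with $\delta_v=+1$ and the residual $\Gamma(\tfrac{s_1+1}{2})^{-1}$ (whose non-vanishing at $\us_0$ is exactly what Lemma \ref{lem:snonv2} guarantees) are matched precisely by the corresponding factors in the prefactor of the lemma, so that no spurious zeros arise on $\mathscr{D}_2$ and no new poles appear after the Gamma factors $\Gamma(s_2)\Gamma(s_1+s_2-\tfrac{1}{2})$ carried by the archimedean components of $Z(\Phi,\us)$ are divided out. Once this matching is confirmed, the finite-place contributions are handled routinely via Lemma \ref{lem:padicnonva}, and the global holomorphicity of $Z(\Phi,\us)$ on $\mathscr{D}_2$ finishes the argument.
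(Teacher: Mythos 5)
Your overall strategy is the one the paper uses: localize the problem at each $\us_0\in\mathscr{D}_2$, build a $\prod_{v\in S}K_v$-spherical test function from Lemmas \ref{lem:snonv1}, \ref{lem:snonv2}, \ref{lem:padicnonva} whose local zeta factors are controlled and non-vanishing at $\us_0$, obtain holomorphy of the suitably multiplied $Z(\Phi,\us)$ on $\mathscr{D}_2$ from the global analysis of $Z_1$ and $Z_2$, and divide via \eqref{eq:zeta2019Ap9}. However, there is a genuine gap in the way you treat $Z_2$ at the finite places. You invoke the Remark after Proposition \ref{prop:splitpart}, which only gives holomorphy of $\sin(\pi s_1/2)^{\#\Sigma_\R}\prod_{v\in S\cap\Sigma_\fin}(1-q_v^{-2s_1})^{\cs}\,(s_1-1)^2(2s_2-1)\zeta_F^S(2s_1)\,Z_2(\Phi,\us)$ with the exponent $\cs$ from Lemma \ref{lem:polelocalp}, which may be larger than $1$, and then assert that the ``additional entire factors $(1-q_v^{-2s_1})^{\cs-1}$ do not affect holomorphicity.'' That step is backwards: from holomorphy of $(1-q_v^{-2s_1})^{\cs}f$ you cannot conclude holomorphy of $(1-q_v^{-2s_1})f$; the extra zeros could be masking higher-order poles of $f$ along $q_v^{2s_1}=1$, so your argument only yields the lemma with $(1-q_v^{-2s_1})^{\cs}$ in place of $(1-q_v^{-2s_1})$, a strictly weaker statement.

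The paper avoids this by citing Proposition \ref{prop:splitpart} itself (not its Remark) for the specific test function chosen: since $\Psi_S$ is supported in $V^0(F_S,\delta_S)$, either $Z_2(\Phi,\us)\equiv 0$ (when $\delta_S\neq 1$), or the factor $Z_S(\Psi_S,\us,1)$ appearing in the explicit formula is exactly the denominator $Z_S(\Psi_S,\us,\delta_S)$ of \eqref{eq:zeta2019Ap9}, so after division only the explicit quotient of partial zeta functions remains, whose poles on $\mathscr{D}_2$ are removed by $(s_1-1)^2(2s_2-1)\zeta_F^S(2s_1)$ (note $\Re(s_2)>1$ and $\Re(s_1+s_2)>3/2$ on $\mathscr{D}_2$); the first-power factors $(1-q_v^{-2s_1})$ are then needed only to absorb the zeros of the local zeta integrals recorded in Lemma \ref{lem:padicnonva} (whose explicit form in Lemma \ref{lem:localAp12} shows $(1-q_v^{-2s_1})/Z_v(\Psi_v,\us,\delta_v)$ is entire up to a nonzero elementary factor). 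Reorganizing your argument along these lines closes the gap. A secondary caution: your opening move of ``enlarging $S$ to contain $\Sigma_2$'' changes the object $\xi^S(\us,\delta_S)$ itself (and $\zeta_F^S$), and the statement for the enlarged set does not formally imply the one for the original $S$; the correct device is to keep $S$ fixed in \eqref{eq:zeta2019Ap9} and only feed the enlarged set (with $\Phi_{v,0}$ at the dyadic places outside $S$) into Proposition \ref{prop:splitpart}, which is why the dyadic factors $(1-q_v^{-2s_1})$, $v\in\Sigma_2$, appear in the statement even when $\Sigma_2\not\subset S$.
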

\begin{proof}
For each $\delta_S\in F_S^\times/(F_S^\times)^2$ and $\us\in\C^2$, by Lemmas \ref{lem:snonv1}, \ref{lem:snonv2}, and \ref{lem:padicnonva}, there exists a compactly supported $\prod_{v\in S}K_v$-spherical function $\Psi_S\in\cS(V(F_S))$ such that
\[
Z(\Psi_S,\us,\delta_S) \times \Gamma\left(\frac{s_1+1}{2}\right)^{\#\Sigma_\R}\times \prod_{v\in\Sigma_\fin\cap S} (1+q_v^{-s_1})^{-1}\neq 0
\]
and $Z(\Psi_S,\us,u_S)=0$ for any $u_S\neq \delta_S$.
Furthermore, choosing $\Phi=\Psi_S\otimes_{v\not\in S}\Phi_{v,0}\in\cS(V(\A))$, we derive from Propositions \ref{prop:eisen} and \ref{prop:splitpart} that
\[
\sin\left(\frac{\pi s_1}{2}\right)^{\#\Sigma_\R} \times  \prod_{v\in (S\cap\Sigma_\fin)\cup\Sigma_2}(1-q_v^{-s_1})  \times (s_1-1)^2 (2s_2-1) \, \zeta_F^S(2s_1)\, Z(\Phi,\us)
\]
is holomorphic on $\mathscr{D}_2$.
Therefore, the assertion follows from \eqref{eq:zeta2019Ap9}. 
\end{proof}

\begin{cor}\label{cor:ac3}
For any $\delta_S\in F^\times_S/(F^\times_S)^2$, the function
\begin{multline*}
\Gamma\left(\frac{s_1+1}{2}\right)^{-\#\Sigma_\R}\sin\left(\frac{\pi s_1}{2}\right)^{\#\Sigma_\R} \times  \prod_{v\in (S\cap\Sigma_\fin)\cup\Sigma_2}(1-q_v^{-2s_1})  \\
\times (s_1-1)^2 (2s_2-1) \, \zeta_F^S(2s_1)\,  (s_2-1)(2s_1+2s_2-3)\, \xi^S(\us,\delta_S)
\end{multline*}
is a holomorphic function on $\C^2$.
\end{cor}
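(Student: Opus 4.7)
The plan is to combine Corollary~\ref{cor:ac1} and Lemma~\ref{cor:ac2}, which give holomorphy on the overlapping domains $\mathscr{D}_1$ and $\mathscr{D}_2$, and then to extend the resulting function to all of $\C^2$ via a tube-domain argument. On $\mathscr{D}_1=\{\Re(s_1)>1\}$, Corollary~\ref{cor:ac1} asserts that $(s_2-1)(2s_1+2s_2-3)\,\xi^S(\us,\delta_S)$ is holomorphic; the extra multiplicative factors $\Gamma\bigl(\tfrac{s_1+1}{2}\bigr)^{-\#\Sigma_\R}$, $\sin(\pi s_1/2)^{\#\Sigma_\R}$, $\prod_{v\in(S\cap\Sigma_\fin)\cup\Sigma_2}(1-q_v^{-2s_1})$, $(s_1-1)^2$, and $(2s_2-1)$ are entire, while $\zeta_F^S(2s_1)$ has its unique pole at $s_1=\tfrac12\notin\mathscr{D}_1$. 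Hence the full product from the statement is holomorphic on $\mathscr{D}_1$.

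On $\mathscr{D}_2$, Lemma~\ref{cor:ac2} gives directly the holomorphy of the product \emph{without} the factor $(s_2-1)(2s_1+2s_2-3)$; multiplying by this entire factor preserves holomorphy. Since $\mathscr{D}_1\cap\mathscr{D}_2$ contains $\{\Re(s_1)>1,\,\Re(s_2)>1\}$ and is in particular non-empty, $\mathscr{D}_1\cup\mathscr{D}_2$ is a connected open subset of $\C^2$ on which the full product is holomorphic.

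Finally, I would invoke Bochner's tube theorem. Writing $\sigma_j=\Re(s_j)$, the set $\mathscr{D}_1\cup\mathscr{D}_2$ is a tube over $T_1\cup T_2\subset\R^2$, with $T_1=\{\sigma_1>1\}$ and $T_2=\{\sigma_2>1\}\cap\{\sigma_1+\sigma_2>3/2\}\cap\{\sigma_1+2\sigma_2>3\}$. A short geometric check shows that the convex hull of $T_1\cup T_2$ is all of $\R^2$: any $(a,b)\in\R^2$ is the midpoint of $p_1=(a+M,b-2M)\in T_1$ and $p_2=(a-M,b+2M)\in T_2$ once $M$ is taken sufficiently large. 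Bochner's theorem then extends the product to a holomorphic function on the tube over the convex hull, namely $\C^2$. The main obstacle is the appeal to Bochner; a more concrete alternative would be to exploit the functional-equation symmetry $s_2\mapsto \tfrac{3}{2}-s_1-s_2$ implicit in Lemma~\ref{lem:princ} together with the local symmetry of Lemma~\ref{lem:Rfe}, thereby obtaining the missing continuation by hand without invoking several-variable complex analysis.
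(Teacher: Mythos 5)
Your proposal is correct and follows essentially the same route as the paper: the paper's proof also combines Corollary~\ref{cor:ac1} and Lemma~\ref{cor:ac2} and then cites H\"ormander's Theorem~2.5.10 (the Bochner tube theorem) together with the observation that the convex hull of $\mathscr{D}_1$ and $\mathscr{D}_2$ is $\C^2$. Your explicit midpoint verification of the convex hull and the connectedness check merely spell out what the paper asserts, so no alternative argument (such as the functional-equation route you sketch) is needed.
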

\begin{proof}
This follows from Corollary \ref{cor:ac1} and Lemma \ref{cor:ac2}.
By \cite[Theorem 2.5.10]{H}, it has an analytic continuation to all of $\mathbb{C}^2$, since the convex hull of $\mathscr{D}_1$ and $\mathscr{D}_2$ is $\C^2$.
\end{proof}

For $\alpha$, $\beta\in\C$, we set
\begin{equation}\label{eq:utab}
\ut(\alpha,\beta):=\left(\alpha-\frac{\beta}{2}+\frac{1}{2}, \; \frac{\beta}{2}\right) \in \C^2.
\end{equation}
\begin{prop}\label{prop:hol}
Assume that $F$ is a totally real field, that is, $\Sigma_\C=\emptyset$.
Let $l$, $m\in\Z$, $l\leq 0$, $m\geq 4$ and $m$ is even.
For any $\delta_S\in F^\times_S/(F^\times_S)^2$, the function $\xi^S(\us,\delta_S)$ is holomorphic at $\us=\ut(l,m)\in(\frac{1}{2}\Z)^2$.
\end{prop}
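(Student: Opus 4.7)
The strategy is to exploit Corollary~\ref{cor:ac3}, which has already done the substantive analytic work: it asserts that the explicit prefactor
\[
A(\us) := \Gamma\!\left(\tfrac{s_1+1}{2}\right)^{-\#\Sigma_\R}\!\!\sin\!\left(\tfrac{\pi s_1}{2}\right)^{\#\Sigma_\R}\!\!\!\!\prod_{v\in (S\cap\Sigma_\fin)\cup\Sigma_2}\!\!\!\!\!(1-q_v^{-2s_1})\;(s_1-1)^2(2s_2-1)\,\zeta_F^S(2s_1)\,(s_2-1)(2s_1+2s_2-3)
\]
times $\xi^S(\us,\delta_S)$ is entire on $\C^2$. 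Thus $\xi^S(\cdot,\delta_S)$ is automatically holomorphic at any point where $A$ is simultaneously finite and nonzero, and the plan is simply to verify this property at $\us=\ut(l,m)=(l-\tfrac{m}{2}+\tfrac12,\tfrac{m}{2})$.

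The crucial numerical observation driving the verification is that, under the hypotheses $l\le0$ and $m\ge 4$ even, the coordinates satisfy $s_1=l-\tfrac{m}{2}+\tfrac12\le-\tfrac{3}{2}$ \emph{as a half-integer} (so $s_1\notin\Z$), $s_2=\tfrac{m}{2}\ge 2$ is a positive integer, and $2s_1=2l-m+1\le -3$ is a \emph{negative odd} integer. With this in hand, the elementary factors are all trivially controlled: $(s_1-1)^2$, $(2s_2-1)$, $(s_2-1)$ and $(2s_1+2s_2-3)=2l-2$ are all nonzero; because $s_1\notin\Z$, both $\sin(\pi s_1/2)^{r_1}$ and $(s_1+1)/2\notin\Z_{\le 0}$ (so $\Gamma((s_1+1)/2)^{-1}$ is finite and nonzero); and the local factors $1-q_v^{-2s_1}=1-q_v^{m-2l-1}$ are nonzero since the exponent is $\ge 3$.

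The only factor requiring genuine care is $\zeta_F^S(2s_1)$. Here the hypothesis that $F$ is totally real (so $r_2=0$) is essential: examining the functional equation $\widehat\zeta_F(1-s)=\widehat\zeta_F(s)$ together with the pole pattern of $\Gamma(s/2)^{r_1}$ shows that the trivial zeros of $\zeta_F(s)$ occur precisely at the negative \emph{even} integers. Since $2s_1$ is an odd integer, $\zeta_F(2s_1)\ne 0$; combined with the non-vanishing of the missing Euler factors $(1-q_v^{-2s_1})$ at $v\in S\cap\Sigma_\fin$ noted above, this gives $\zeta_F^S(2s_1)\ne 0$. Hence $A(\ut(l,m))$ is finite and nonzero, and the proposition follows. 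I do not anticipate a genuine obstacle: all the delicate meromorphic continuation has been absorbed into Corollary~\ref{cor:ac3}, and what remains is the numerical check that the prescribed lattice points avoid every singular locus of $A$—with the trivial zeros of $\zeta_F^S(2s_1)$ being the only non-obvious one, handled by the totally real hypothesis.
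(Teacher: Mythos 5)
Your proposal is correct and follows essentially the same route as the paper: the paper's proof also deduces the statement directly from Corollary~\ref{cor:ac3}, the only substantive point being that $\zeta_F^S(2s_1)=\zeta_F^S(2l-m+1)$ is nonzero at the negative odd integer $2l-m+1$ precisely because $\Sigma_\C=\emptyset$ (otherwise the functional equation forces a trivial zero there). Your additional checks of the elementary factors are exactly the routine verifications the paper leaves implicit.
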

\begin{proof} This follows from Corollary \ref{cor:ac3}.
Notice that, if $\Sigma_\C\neq \emptyset$, then one has $\zeta^S_F(2s_1)=\zeta^S_F(2l-m+1)=0$ by the functional equation.
Hence, in such a case, it might have a pole.
This is the reason that we assumed $\Sigma_\C=\emptyset$.
\end{proof}
\begin{remark}
Let us consider the case $m=2$ in Proposition \ref{prop:hol}.
In this case, the Dirichlet series agrees with the Shintani zeta function for the space of binary quadratic forms.
But the argument in Proposition \ref{prop:hol} cannot be applied to the case $m=2$ because of the pole at $s_2=1$.
However, one can study their special values at $l\in\Z_{\leq 0}$ by the same argument as in \cite{Shintani} if $F$ is a totally real field over $\Q$ and $\omega_v=\sgn$ for every $v\in\Sigma_\R$.
\end{remark}

\subsection{Functional equations}\label{fun}

Suppose that $S$ contains $\Sigma_\inf$.
Set
\[
\Xi^S(\us,\omega_S):=\frac{\zeta^S_F(2s_1)}{\zeta^S_F(s_1)}\, \tilde \xi^S(\us,\omega_S).
\]
\begin{thm}\label{thm:funct1}
Recall $\Gamma_S(s,\omega_S)$ from \eqref{eq:functquad} for $\omega_S\in 
\widehat{F_S^\times/(F_S^\times)^2}$. Then
\[
\Xi^S(s_1+s_2-\frac{1}{2},1-s_2,\omega_S)=\Gamma_S(s_2,\omega_S) \, \Xi^S(\us,\omega_S).
\]
\end{thm}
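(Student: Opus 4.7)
The plan is to derive the functional equation directly from the explicit formula in Theorem~\ref{thm:global} by substituting $(s_1+s_2-\tfrac{1}{2},1-s_2)$ into the Dirichlet series expression and then applying the functional equation \eqref{eq:functquad} of the partial Hecke $L$-functions term-by-term. No analytic continuation argument is needed beyond what is already built into Theorem~\ref{thm:global}, since both sides are identified with convergent Dirichlet series in an appropriate region and extended meromorphically from there.

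Concretely, I would first rewrite
\[
\Xi^S(\us,\omega_S)=\zeta_F^S(2s_1)\,\zeta_F^S(2s_1+2s_2-1)\sum_{\chi}\frac{L^S(s_2,\chi)}{L^S(2s_1+s_2,\chi)\,N(\ff_\chi^S)^{s_1}}
\]
using Theorem~\ref{thm:global} and the definition of $\Xi^S$. Next, setting $(s_1',s_2'):=(s_1+s_2-\tfrac{1}{2},1-s_2)$, one checks the elementary identities
\[
2s_1'=2s_1+2s_2-1,\qquad 2s_1'+2s_2'-1=2s_1,\qquad 2s_1'+s_2'=2s_1+s_2,
\]
which show that the $\zeta_F^S$-factors swap and the denominator factor $L^S(2s_1+s_2,\chi)$ is preserved. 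Only the numerator $L^S(s_2,\chi)$ is transformed into $L^S(1-s_2,\chi)$, and the conductor factor changes from $N(\ff_\chi^S)^{s_1}$ to $N(\ff_\chi^S)^{s_1+s_2-1/2}$.

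The key step is then to apply \eqref{eq:functquad}, which gives
\[
L^S(1-s_2,\chi)=N(\ff_\chi^S)^{s_2-1/2}\,\Gamma_S(s_2,\chi_S)\,L^S(s_2,\chi),
\]
so that
\[
\frac{L^S(1-s_2,\chi)}{N(\ff_\chi^S)^{s_1+s_2-1/2}}=\Gamma_S(s_2,\chi_S)\,\frac{L^S(s_2,\chi)}{N(\ff_\chi^S)^{s_1}}.
\]
Since every $\chi$ in the sum satisfies $\chi_S=\omega_S$, the factor $\Gamma_S(s_2,\chi_S)=\Gamma_S(s_2,\omega_S)$ is common and pulls out of the sum, yielding the desired identity.

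The main (and only) subtlety is verifying that the manipulation is legal where the series converges, and then invoking meromorphic continuation; the absolute convergence of the Dirichlet series in the region $\Re(s_1)>1$, $\Re(s_2)>1$ established right after the proof of Lemma~\ref{lem:conductor-series} (see \eqref{thm:global-f1}) ensures that the substitution and rearrangement are valid for $\Re(s_1)$ and $\Re(s_2)$ large, and hence the equality extends to the full meromorphic continuation of both sides. There is no genuine analytic obstacle.
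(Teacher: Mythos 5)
Your proposal is correct and is essentially the paper's own proof: the paper derives Theorem~\ref{thm:funct1} precisely by combining the explicit formula of Theorem~\ref{thm:global} with the partial $L$-function functional equation \eqref{eq:functquad}, exactly as you do, with the $\zeta_F^S$-factors swapping and $\Gamma_S(s_2,\omega_S)$ pulling out of the $\chi$-sum since $\chi_S=\omega_S$ for every term. Your closing remark about justifying the substitution via convergence and meromorphic continuation is at the same level of detail as the paper, which offers no further argument.
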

\begin{proof}
This follows from \eqref{eq:functquad} and Theorem \ref{thm:global}.
\end{proof}

Recall $\tilde G_v(\us,\chi_v,\omega_v)$ from \eqref{GG}, and define
\[
\tilde G_S(\us,\chi_S,\omega_S):=\prod_{v\in S}\tilde G_v(\us,\chi_v,\omega_v) .
\]
\begin{thm}\label{thm:funct2}
If $S$ contains $\Sigma_\inf\cup\Sigma_2\cup\{v\in\Sigma_\fin \mid \fd_v\neq 0\}$, then
\[
\Xi^S\left((s_1,\tfrac{3}{2}-s_1-s_2),\omega_S\right)=\Delta_F^{-\frac{3}{2}}\sum_{\chi_S\in\widehat{F_S^\times/(F_S^\times)^2}} \tilde G_S(\us,\chi_S,\omega_S) \, \Xi^S(\us,\chi_S).
\]
\end{thm}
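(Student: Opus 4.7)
\emph{Strategy.} The idea is to lift the assertion to a global functional equation for the zeta integral $Z(\Phi,\us)$ and then descend to $\Xi^S$ via the decomposition \eqref{eq:glchar} combined with the local functional equations of Lemma~\ref{lem:localfechar}. Set $\us':=(s_1,\tfrac{3}{2}-s_1-s_2)$. I would first prove that $Z(\Phi,\us)=Z(\hat\Phi,\us')$ as meromorphic functions on $\C^2$. This follows by applying Lemma~\ref{lem:princ} to both $(\Phi,\us)$ and $(\hat\Phi,\us')$ and comparing: the two truncated integrals $Z_+$ match up via Fourier inversion $\hat{\hat\Phi}(x)=\Phi(-x)$ together with the invariance $Z_+(\Phi(-\cdot),\us)=Z_+(\Phi,\us)$, while the four residual terms involving $T(\,\cdot\,,s_1)$ and $\zeta(\Phi^{(3)}(0,0,\cdot),s_1)$ are interchanged pairwise by the substitution $s_2\mapsto\tfrac{3}{2}-s_1-s_2$, using $T(\Phi(-\cdot),s)=T(\Phi,s)$ and the analogous symmetry for the Tate factor. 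The equality then follows by cancellation.

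Under the hypothesis on $S$, the additive character $\psi_{F_v}$ is unramified and $v$ is non-dyadic for every $v\notin S$, so the local Fourier transform of $\Phi_{v,0}$ equals $\Phi_{v,0}$. For $\Phi=\Phi_S\otimes\Phi_0^S$ as in \eqref{eq:test}, one then has
\[
\hat\Phi=\Delta_F^{-3/2}\bigl(\hat\Phi_S\otimes\Phi_0^S\bigr),
\]
the discriminant factor being the ratio between the self-dual adelic Haar measure on $V(\A)$ (in which Fourier inversion, hence Lemma~\ref{lem:princ}, is formulated) and the product $\prod_v \d x_v$ of local Haar measures (in which Lemma~\ref{lem:localfechar} is stated), cf. \eqref{HaarMesDec}. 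Substituting this together with \eqref{eq:glchar} into both sides of $Z(\Phi,\us)=Z(\hat\Phi,\us')$ and then invoking Lemma~\ref{lem:localfechar} at $\us'$ to rewrite $\tilde Z_S(\hat\Phi_S,\us',\chi_S)=\sum_{\omega_S}\tilde G_S(\us',\chi_S,\omega_S)\,\tilde Z_S(\Phi_S,\us,\omega_S)$, one arrives at
\[
\sum_{\omega_S}\tilde Z_S(\Phi_S,\us,\omega_S)\Bigl[\,\tilde\xi^S(\us,\omega_S)-\Delta_F^{-3/2}\sum_{\chi_S}\tilde G_S(\us',\chi_S,\omega_S)\,\tilde\xi^S(\us',\chi_S)\Bigr]=0.
\]

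To conclude, I would use Lemma~\ref{lem:nonv} at each $v\in S$ to construct, by tensor products and suitable linear combinations over $\delta_S\in F_S^\times/(F_S^\times)^2$, test functions $\Phi_S$ for which $\tilde Z_S(\Phi_S,\us,\omega_S)$ isolates any prescribed character $\omega_S$; the bracket above must therefore vanish identically. Interchanging the roles of $\us$ and $\us'$ (an involution fixing $s_1$) and multiplying through by the factor $\zeta_F^S(2s_1)/\zeta_F^S(s_1)$, which is invariant under this swap, yields the stated equality for $\Xi^S$. The main technical delicacy is the careful bookkeeping of the factor $\Delta_F^{-3/2}$ appearing in the statement, which arises precisely from the discrepancy between the self-dual adelic Fourier transform governing the global functional equation and the product of local Fourier transforms—normalized with $\d x_1\d x_{12}\d x_2$—in terms of which $\tilde G_S$ is defined. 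A secondary concern is the clean pairwise matching of the four polar correction terms of Lemma~\ref{lem:princ} under $\us\leftrightarrow\us'$, which ultimately relies on the invariances under $x\mapsto-x$ of both $T(\Phi,s_1)$ and $\zeta(\Phi^{(3)}(0,0,\cdot),s_1)$.
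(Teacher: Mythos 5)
Your proposal is correct and takes essentially the same route as the paper: the symmetry of the principal-part decomposition in Lemma \ref{lem:princ} (together with the $x\mapsto -x$ invariances) gives $Z(\hat\Phi,\us)=Z\bigl(\Phi,(s_1,\tfrac{3}{2}-s_1-s_2)\bigr)$, and combining \eqref{eq:glchar}, the local functional equations of Lemma \ref{lem:localfechar}, the separation of characters via Lemma \ref{lem:nonv}, and the identity $\hat\Phi_{v,0}=\Phi_{v,0}$ for $v\notin S$ yields the stated equation. Your explicit bookkeeping of the factor $\Delta_F^{-3/2}$ coming from \eqref{HaarMesDec} just makes precise a detail the paper leaves implicit.
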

\begin{proof}
It follows from Lemma \ref{lem:princ} that
\begin{equation*}
Z(\hat\Phi,\us)=Z\left(\Phi,(s_1,\tfrac{3}{2}-s_1-s_2)\right) \qquad (\forall \Phi \in\cS(V(\A))). 
\end{equation*}
Hence, one can derive this functional equation by using Lemmas \ref{lem:nonv} and \ref{lem:localfechar} and \eqref{eq:glchar}.
For $v\in\Sigma_\fin$, we note that $\Phi_{v,0}= \hat \Phi_{v,0}$ if and only if $v\not\in\Sigma_2\cup\{v\in\Sigma_\fin \mid \fd_v\neq 0\}$.
\end{proof}

By the functional equations of Theorems \ref{thm:funct1} and \ref{thm:funct2}, we conjecture that $\tilde\xi^S(\us,\omega_S)$ possesses a group of functional equations isomorphic to $D_{12}$.

\begin{remark}\label{rem:explicit}
In \cite[p.291]{IS2}, it is proved 
\[
\xi_j(\us)= \frac{\zeta(s_1)\zeta(2s_2) \zeta(2s_1+2s_2-1)}{2\zeta(2s_1)}\sum_{(-1)^{j-1} D>0} \frac{L(s_1,\chi_D)}{L(s_1+2s_2,\chi_D) \, |D|^{s_2}}.
\]
where $D$ moves over $1$ and fundamental discriminants.
An adelic version of this formula was given by Taniguchi in \cite[Proposition B.9]{Taniguchi}. 
Further, from their explicit formula and the functional equation of $L(s_1,\chi)$, one obtains
\[
\frac{\zeta(2-2s_1)}{\zeta(1-s_1)}\xi_j(1-s_1,s_1+s_2-\frac{1}{2})=2^{-s_1+1}\pi^{-s_1}\Gamma(s_1)\, \frac{\zeta(2s_1)}{\zeta(s_1)}\xi_j(\us) \begin{cases} \cos(\pi s_1/2) & \text{if $j=1$}, \\ \sin(\pi s_1/2) & \text{if $j=2$} \end{cases}
\]
which was proved in \cite[Theorem 1]{Shintani}.

Similarly, a functional equation of $\xi^S(\us,\delta_S)$ as $(1-s_1,s_1+s_2-\frac{1}{2})\leftrightarrow (s_1,s_2)$ also follows from Theorems \ref{thm:funct1} and \ref{thm:funct2}.
\end{remark}

\subsection{The zeta functions $D_m(s,\omega_S)$}\label{D_m}

Choose $m\in\Bbb Z$, $m\geq 1$ and recall the notation $\ut( \, \cdot \, ,m)$ defined in \eqref{eq:utab}.
For $s\in\C$, we set
\begin{align}\label{D_m}
D_m(s,\omega_S)&:=\frac{\zeta_F^S(2s-m+1)}{\zeta_F^S(s-\frac{m}{2}+\frac{1}{2})}  \, \tilde \xi^S(\ut(s,m),\omega_S) \\
&=\zeta_F^S(2s-m+1)\zeta_F^S(2s)\sum_\chi \frac{L^S(m/2,\chi)}{ L^S(2s-\frac{m}{2}+1,\chi)\, N(\ff_{\chi}^S)^{s-\frac{m}{2}+\frac{1}{2}} },\nonumber
\end{align}
where $\chi=\otimes_v\chi_v$ moves over all real valued characters satisfying $\otimes_{v\in S}\chi_v=\omega_S$.
\begin{cor}\label{cor:applicationtotarceformula}
Assume that $F$ is a totally real field, that is, $\Sigma_\C=\emptyset$.
Let $l$, $m\in\Z$, $l\leq 0$, $m\geq 4$ and $m$ is even.
The Dirichlet series $D_m(s,\omega_S)$ is holomorphic at $s=l$.
\end{cor}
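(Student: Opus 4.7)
The plan is to read off the holomorphy of $D_m(s,\omega_S)$ at $s=l$ from Proposition~\ref{prop:hol} via the factorization
\[
D_m(s,\omega_S) \;=\; \frac{\zeta_F^S(2s-m+1)}{\zeta_F^S(s-\tfrac{m}{2}+\tfrac{1}{2})}\, \tilde\xi^S(\ut(s,m),\omega_S)
\]
built into the definition of $D_m$, after checking that the rational prefactor is finite and nonzero at $s=l$, so that it neither introduces a new pole nor cancels a zero of $\tilde\xi^S$.

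For the $\tilde\xi^S$-factor, I would use \eqref{eq:elemchar} to expand $\tilde\xi^S(\us,\omega_S)$ as a finite linear combination of $\xi^S(\us,\delta_S)$ with $\delta_S \in F_S^\times/(F_S^\times)^2$. The assumptions ($F$ totally real, $l\leq 0$, $m\geq 4$ even) are exactly those of Proposition~\ref{prop:hol}, which therefore supplies the holomorphy of each $\xi^S(\us,\delta_S)$ at $\us=\ut(l,m)$, and hence of $\tilde\xi^S(\us,\omega_S)$ in an open neighborhood of $\ut(l,m)$ in $\C^2$. Restricting to the affine line $\us=\ut(s,m)$ then gives holomorphy of $s\mapsto \tilde\xi^S(\ut(s,m),\omega_S)$ at $s=l$.

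Next I would analyze the prefactor at $s=l$. The numerator argument $2l-m+1$ is an odd integer with $2l-m+1\leq -3$; since $\zeta_F$ has its unique pole at $s=1$, the value $\zeta_F^S(2l-m+1)$ is finite. The denominator argument $l-m/2+1/2$ is a half-odd integer $\leq -3/2$. Because $F$ is totally real (so $r_2=0$), the completed zeta function $\widehat\zeta_F(s)=\Delta_F^{s/2}[\pi^{-s/2}\Gamma(s/2)]^{r_1}\zeta_F(s)$ satisfies $\widehat\zeta_F(s)=\widehat\zeta_F(1-s)$, and the trivial zeros of $\zeta_F$ come only from the poles of $\Gamma(s/2)^{r_1}$; these are located at the non-positive even integers and do not include any half-odd integer. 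Since each Euler factor $(1-q_v^{-s})$ removed when passing from $\zeta_F$ to $\zeta_F^S$ vanishes (for real $s$) only at $s=0$, we conclude that $\zeta_F^S(l-m/2+1/2)\neq 0$.

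Combining the two steps, the prefactor is finite and nonzero at $s=l$, so $D_m(s,\omega_S)$ inherits the holomorphy of $\tilde\xi^S(\ut(s,m),\omega_S)$ at that point. The genuinely substantive input is Proposition~\ref{prop:hol} together with Corollary~\ref{cor:ac3}; the hypothesis $\Sigma_\C=\emptyset$ is what rules out the extra trivial zeros of $\zeta_F^S$ that would be produced by complex-place Gamma factors $\Gamma(s)^{r_2}$ and would otherwise wreck the cancellation. The remaining verification about trivial zeros of $\zeta_F$ at half-odd integers is elementary, and is the only (minor) obstacle once Proposition~\ref{prop:hol} is in hand.
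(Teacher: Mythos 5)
Your proposal is correct and follows essentially the same route as the paper, whose proof is just the one-line citation of Proposition~\ref{prop:hol} together with \eqref{eq:elemchar}; you have merely made explicit the routine check, left implicit in the paper, that $\zeta_F^S(2s-m+1)$ is finite and $\zeta_F^S(s-\tfrac{m}{2}+\tfrac12)$ is nonvanishing at $s=l$ (a half-odd negative argument, hence no trivial zero when $F$ is totally real). The only cosmetic remark is that nonvanishing of the whole prefactor is not needed for holomorphy --- finiteness of the quotient suffices --- but this does not affect the validity of your argument.
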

\begin{proof}
This is a corollary of Proposition \ref{prop:hol}; see \eqref{eq:elemchar}.
\end{proof}

This corollary will be used in the study of equidistribution theorems of holomorphic Siegel cusp forms of general degree in \cite{KWY}, in particular, in the estimation of the unipotent contributions.

\begin{prop}\label{prop:simplepole3.17}
Let $m\in\Z$, $m\geq 1$, and $m\neq 2$.
The Dirichlet series $D_m(s,\omega_S)$ is holomorphic in the domain $\{s\in\C\mid \Re(s) \geq m/2\}$ except for $s=(m+1)/2$.
Further, it has the simple pole at $s=\frac{m+1}{2}$ if $m\neq 1$, and it has the double pole at $s=1$ if $m=1$. 
\end{prop}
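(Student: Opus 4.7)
The key initial observation is that $D_m(s,\omega_S)=\Xi^S(\ut(s,m),\omega_S)$: along the curve $\us=\ut(s,m)=(s-\tfrac{m}{2}+\tfrac{1}{2},\tfrac{m}{2})$ one has $2s_1=2s-m+1$, so the defining ratios of $\zeta_F^S$ in the two expressions agree. It therefore suffices to control the meromorphic continuation of $\Xi^S(\us,\omega_S)$ along this line, which lies in $\Re(s_1)\geq 1/2$ precisely when $\Re(s)\geq m/2$.

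The main tool is Corollary~\ref{s_1=1}. Choose a $K$-spherical $\Phi=\Phi_S\otimes\bigotimes_{v\notin S}\Phi_{v,0}$ as in \eqref{eq:test}, with $\Phi_S$ selected via the non-vanishing Lemmas~\ref{lem:snonv1}, \ref{lem:snonv2}, \ref{lem:padicnonva} so that the local zeta factors do not vanish at the points of interest. By Corollary~\ref{s_1=1} the product $(s_1-1)^2 s_2(s_2-1)(2s_1+2s_2-1)(2s_1+2s_2-3)Z(\Phi,\us)$ is holomorphic on $\Re(s_1)\geq 1/2$; specialising to $\us=\ut(s,m)$ it becomes $m(m-2)(s-(m+1)/2)^2 s(s-1) Z(\Phi,\ut(s,m))$, and $m(m-2)\neq 0$ since $m\geq 1$ and $m\neq 2$. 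Formulae \eqref{eq:zeta2019Ap9} and \eqref{eq:elemchar} transfer this holomorphicity to $\tilde\xi^S(\ut(s,m),\omega_S)$. In $\Re(s)\geq m/2$ the root $s=0$ is always outside, while $s=1$ is outside when $m\geq 3$ and coincides with $(m+1)/2$ when $m=1$; consequently $\tilde\xi^S(\ut(s,m),\omega_S)$ has at most a double pole at $s=(m+1)/2$ for $m\geq 3$, and at most a triple pole at $s=1$ for $m=1$ (there the clearing factor collapses to $s(s-1)^3$).

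Next I pass from $\tilde\xi^S$ to $\Xi^S=D_m$ by multiplying by $\zeta_F^S(2s_1)/\zeta_F^S(s_1)$. At $s=(m+1)/2$ (i.e.\ $s_1=1$) the factor $\zeta_F^S(s_1)^{-1}$ has a simple zero while $\zeta_F^S(2s_1)=\zeta_F^S(2)$ is finite and non-zero, reducing the pole order by one. At $s=m/2$ (i.e.\ $s_1=1/2$) the factor $\zeta_F^S(2s_1)$ has a simple pole; however, the presence of $\zeta_F^S(2s_1)$ in the clearing polynomial of Corollary~\ref{cor:ac3} forces $\xi^S(\us,\delta_S)$—and hence $\tilde\xi^S(\us,\omega_S)$—to possess a compensating simple zero along the divisor $s_1=1/2$, so no spurious pole of $D_m$ appears at $s=m/2$. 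No other zeros of $\zeta_F^S(s_1)$ occur in $\Re(s_1)\geq 1/2$. Therefore $D_m(s,\omega_S)$ is holomorphic on $\{\Re(s)\geq m/2\}$ with the single possible pole at $s=(m+1)/2$, of order at most $1$ for $m\geq 3$ and at most $2$ for $m=1$.

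To verify that the stated orders are sharp, I will use Propositions~\ref{prop:splitpart} and \ref{prop:eisen}. The $\zeta_F^S(s_1)^2$ factor in Proposition~\ref{prop:splitpart}'s formula for $Z_2$ supplies the leading double pole of $Z(\Phi,\us)$ at $s_1=1$; for $m\geq 3$ its coefficient is a non-zero multiple of $Z_S(\Phi_S,(1,m/2),1)\,c_F^2\,\zeta_F^S(m)/\zeta_F^S(2)$, which gives a genuine simple pole of $D_m$ at $s=(m+1)/2$ after the simple zero of $\zeta_F^S(s_1)^{-1}$. For $m=1$, the factor $\zeta_F^S(2s_2)$ in the same formula contributes an additional pole along $s_2=1/2$ which, combined with $\zeta_F^S(s_1)^2$, produces a triple singularity of $Z(\Phi,\us)$ at $(s_1,s_2)=(1,1/2)$; equivalently, Proposition~\ref{prop:eisen}'s residue formula $\mathrm{Res}_{s_1=1}Z_1(\Phi,\us)=2\rho_0\mathfrak Z(\Phi,s_2+1/2)$ combined with the double pole of $\mathfrak Z(\Phi,s)$ at $s=1$ (Yukie) yields the same conclusion, and this becomes a genuine double pole of $D_1$ at $s=1$ after dividing by the simple zero of $\zeta_F^S(s_1)^{-1}$. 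The main obstacle is this $m=1$ case: one must carefully verify non-vanishing of the leading coefficient at $(1,1/2)$, which requires balancing the residues of $Z_1$ and $Z_2$ at the intersection of their singular divisors.
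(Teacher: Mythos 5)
For the first assertion and for the sharpness of the simple pole when $m\geq 3$, your route is essentially the paper's: Corollary \ref{s_1=1} together with \eqref{eq:zeta2019Ap9} and the non-vanishing lemmas controls $\xi^S(\ut(s,m),\delta_S)$ on $\Re(s)\geq m/2$, Propositions \ref{prop:eisen} and \ref{prop:splitpart} produce the genuine double pole of the zeta integral at $s_1=1$ from the $\zeta_F^S(s_1)^2$ factor of $Z_2$, and division by $\zeta_F^S(s_1)$ lowers the order by one. Two caveats, however. Your assertion that ``no other zeros of $\zeta_F^S(s_1)$ occur in $\Re(s_1)\geq 1/2$'' is false: $\zeta_F^S$ has the same nontrivial zeros as $\zeta_F$ in that region, infinitely many of which lie on the line $\Re(s_1)=\tfrac12$ (and, absent GRH, possibly in $\tfrac12<\Re(s_1)<1$), so the step in which you dismiss potential poles of $D_m$ at such points cannot be justified this way; the paper's own proof passes over the division by $\zeta_F^S(s_1)$ without comment, but it does not rest on a false statement. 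Also, your compensating-zero argument at $s_1=\tfrac12$ via Corollary \ref{cor:ac3} is fine for $m\geq 3$ but degenerates for $m=1$, where the point $(\tfrac12,\tfrac12)$ also lies on the divisor $2s_2-1=0$.

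The decisive gap is the one you flag yourself: for $m=1$ you never prove that the double pole at $s=1$ actually occurs. Both of your suggested routes run into the cancellation problem at the crossing point $(s_1,s_2)=(1,\tfrac12)$ of the singular divisors of $Z_1$ and $Z_2$ (and Proposition \ref{prop:splitpart} cannot simply be specialized to $s_2=\tfrac12$, where $\zeta_F^S(2s_2)$ blows up), and you leave the required non-vanishing of the leading coefficient unverified. The paper settles this by a different mechanism: by Lemma \ref{lem:princ} restricted to $s_2=\tfrac12$, the only term capable of producing a triple pole of $Z(\Phi,\ut(s,1))$ at $s=1$ is $\frac{c_F}{2s_1+2s_2-3}\,T(\widehat\Phi,s_1)$, so the triple pole occurs if and only if $T(\widehat\Phi,s_1)$ has a double pole at $s_1=1$; by \eqref{eq:singular3.17} this is equivalent to $T_{S_1}(\widehat{\Psi}_{S_1},1)\neq 0$ with $S_1=S\cup\Sigma_2$, and the paper computes $T_{S_1}(\widehat{\Psi}_{S_1},1)$ explicitly, showing it equals a nonzero constant times $\int\int \Psi_{S_1}(x_1,x_{12},0)\,\d x_1\,\d x_{12}$, which a suitable test function (chosen so that only $\delta_S=1$ contributes) makes nonzero; the simple pole of the denominator $\zeta_F^S(s)$ then converts the resulting triple pole of $\tilde\xi^S(\ut(s,1),\omega_S)$ into the asserted double pole of $D_1$. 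Your alternative via $\mathrm{Res}_{s_1=1}Z_1=2\rho_0\,\mathfrak Z(\Phi,s_2+\tfrac12)$ and Yukie's double pole can be made to work (the paper's remark after the proposition says as much), but only because Yukie's criterion for the double pole of $\mathfrak Z$ reduces to the very same non-vanishing of $\int\int\Psi_{S_1}(x_1,x_{12},0)\,\d x_1\,\d x_{12}$, i.e., precisely the computation your proposal omits.
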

\begin{proof}
For each $\delta_S\in F_S^\times/(F_S^\times)^2$ and $\us\in\C^2$, we choose a compactly supported $\prod_{v\in S}K_v$-spherical function $\Psi_S\in\cS(V(F_S))$ as in Lemmas \ref{lem:snonv1}, \ref{lem:snonv2}, and \ref{lem:padicnonva}, and set $\Phi=\Psi_S\otimes_{v\not\in S}\Phi_{v,0}\in\cS(V(\A))$.
Then, it follows from this test function $\Phi$, \eqref{eq:zeta2019Ap9} and Corollary \ref{s_1=1} that $\xi^S((s-\frac{m}{2}+\frac{1}{2},\frac{m}{2}),\delta_S)$ is holomorphic on $\Re(s)\geq m/2$ except for $s=(m+1)/2$.
Hence, the first assertion is proved.

For the case $m>2$, $\tilde \xi^S(\ut(s,m),\omega_S)$ always has a double pole at $s=\frac {m+1}2$ for any $\omega_S$, because $\xi^S(\ut(s,m),\delta_S)$ has a double pole at $s=\frac {m+1}2$ if and only if $\delta_S=1$.
Note that the double pole comes from $Z_2(\Phi,\us)$; $Z_1(\Phi,\us)$ has at most a simple pole at $s_1=1$; see Propositions \ref{prop:eisen} and \ref{prop:splitpart}. 
But it cancels with the simple pole of the denominator $\zeta_F^S(s-\frac m2+\frac 12)$.

Let us consider the case $m=1$, and choose a test function $\Phi=\Psi_S\otimes_{v\not\in S}\Phi_{v,0}$ as above.
By Lemma \ref{lem:princ}, $Z(\Phi,\ut(s,1/2))$ has a triple pole at $s=1$ if and only if $T(\widehat{\Phi},s)$ has a double pole at $s=1$.
It follows from \eqref{eq:singular3.17} that this is equivalent to that $T_S(\widehat{\Psi}_{S_1},1)$ does not vanish, where $S_1=S\cup\Sigma_2$.
Since $\widehat{\Psi}_{S_1}$ is now $\prod_{v\in S_1}K_v$-spherical, one has
\begin{align*}
T_S(\widehat{\Psi_{S_1}},1)= & \int_{F_{S_1}^\times}\int_{F_{S_1}} |a|\, \widehat{\Psi}_{S_1}((a,\begin{pmatrix} 1&0 \\ b&1 \end{pmatrix})\cdot \begin{pmatrix}1&0 \\ 0&0 \end{pmatrix}) \, \d b \, \d^\times a \\
= & \mathrm{(constant)} \times \int_{G''(F_S)} |a| \, \phi(h) \, \widehat{\Psi}_{S_1}( g\cdot \begin{pmatrix}1&0 \\ 0&0 \end{pmatrix}) \, \d g \\
= & \mathrm{(constant)} \times \int_{F_S^\times}\int_{F_S}\int_{F_S^\times} |a| \, |c|^{-1} \phi(\begin{pmatrix}1&b \\ 0& c\end{pmatrix}) \, \widehat{\Psi}_{S_1}( (a,\begin{pmatrix}1&b \\ 0& c\end{pmatrix})\cdot \begin{pmatrix}1&0 \\ 0&0 \end{pmatrix}) \, \d^\times c \, \d b \, \d^\times a \\
= & \mathrm{(constant)} \times \int_{F_S}  \widehat{\Psi}_{S_1}(a,0,0) \d a = \mathrm{(constant)} \times \int_{F_S}\int_{F_S}  \Psi_{S_1}(x_1,x_{12},0)  \d x_1 \, \d x_{12} 
\end{align*}
where $\phi$ is a left $\prod_{v\in S}K_v$-spherical function in $C_c^\inf(\PGL(2,F_S))$.
Therefore, choosing a test function $\Phi$, one can prove that $\xi^S(\ut(s,m),\delta_S)$ has a triple pole at $s=1$ if and only if $\delta_S=1$.
Thus, the assertion for $m=1$ follows from this fact and the simple pole of the denominator $\zeta_F^S(s)$ at $s=1$.
\end{proof}

\begin{remark}
It is also possible to prove the existence of the double pole of $D_1(s,\omega_S)$ at $s=1$ by using Corollary \ref{s_1=1} and a property of $\mathfrak Z(\Phi,s)$.
Actually, $\mathfrak Z(\Phi,s_2+\frac{1}{2})$ has a double pole at $s_2=1/2$ if and only if
$\int_{F_S}\int_{F_S}  \Psi_{S_1}(x_1,x_{12},0)  \d x_1 \, \d x_{12}\neq 0$.
(See \cite[Definitions (2.3), (2.11) and (2.14), Proposition (2.12), and Theorem (4.2)]{Yukie}.)
Furthermore, the double pole of $D_1(s,\omega_S)$ comes from it by Corollary \ref{s_1=1}. 
\end{remark}

\begin{remark}
As for $D_2(s,\omega_S)$ $(m=2)$, its poles were already studied by \cite{Yukie} and \cite{Datskovsky}. See also \cite[Chapter 4]{HW}.
The Dirichlet series $D_2(s,\omega_S)$ is holomorphic in the domain $\{s\in\C\mid \Re(s) \geq 3/4\}$ except for $s=1$, $3/2$.
Further, it has the simple pole at $s=\frac{3}{2}$, and it might have a simple or double pole at $s=1$.
In particular, it has a double pole at $s=1$ if and only if $\omega_S=\trep_S$.
\end{remark}

\begin{remark}
We can consider the Dirichlet series $D_\beta(s,\omega_S)$ for a complex number $\beta$ instead of an integer $m$. 
As in \cite[Theorem (2)]{GH}, it follows from Corollary \ref{s_1=1} that $D_\beta(s,\omega_S)$ is holomorphic in the domain $\{s\in\C\mid \Re(s)>\Re(\beta)/2\}$ except for $s=(\beta+1)/2$ if $\Re(\beta)\geq 1$.
However, it seems difficult to determine the order of the pole for $1\leq \Re(\beta)\leq 2$, $\beta\neq 1,2$, because Propositions \ref{prop:eisen} and \ref{prop:splitpart} are not available.
\end{remark}

\section{Application to central values of Dirichlet $L$-functions}\label{appl}

In this section, let $F=\Bbb Q$.
For each fundamental discriminant $D$, we write $\chi_D$ for the Dirichlet character corresponding to $\Q(\sqrt{D})$.
For $D=1$, $\chi_D$ means the trivial character.
Let $S_0$ be a finite set of prime numbers, and we choose $S$ as $S=S_0\sqcup\{\inf\}$.
Since $\chi_D$ is identified with a character on $\Q^\times \R_{>0}\bsl \A$ in the usual manner, we use the same notation $\chi_D$ for it, and we have the decomposition $\chi_D=\prod_{v\in\Sigma}\chi_{D,v}$.
Let $\fD$ denote the set of $1$ and all fundamental discriminants, and set
\[
\fD(\omega_S):=\{D\in\fD \mid \chi_{D,v}=\omega_v \quad (\forall v\in S)\}
\]
for $\omega_S=\prod_{v\in S}\omega_v\in \widehat{\Q_S^\times/(\Q_S^\times)^2}$.
We consider the following four cases
\begin{itemize}
\item[(i)] $\omega_\inf=\trep_\inf$ and $m\not\equiv 2 \mod 4$.
\item[(ii)] $\omega_\inf=\trep_\inf$ and $m\equiv 2 \mod 4$.
\item[(iii)] $\omega_\inf=\sgn$ and $m\not\equiv 0 \mod 4$.
\item[(iv)] $\omega_\inf=\sgn$ and $m\equiv 0 \mod 4$.
\end{itemize}
for $m\in\Z_{\geq 1}$, $\omega_\inf=\trep_\inf$ or $\sgn$. 
For each $m\in\Z_{\geq 1}$, a Dirichlet series $L_m(s,\omega_S)$ is defined by
\begin{multline*}
L_m(s,\omega_S):= D_m(s,\omega_S) \times 2(2\pi)^{-\frac{m}{2}} \Gamma(m/2)\\
\times \prod_{p\in S_0}\frac{1-\omega_p(p) p^{-2s+\frac{m}{2}-1}}{(1-p^{-2s})(1-p^{-2s+m-1})(1-\omega_p(p)p^{-\frac{m}{2}})} \times \begin{cases} \cos(m\pi/4) & \text{if (i),}  \\ \frac{\pi}{2} \sin (m\pi/4)  & \text{if (ii),}  \\   \sin(m\pi/4)  & \text{if (iii),}  \\ -\frac{\pi}{2}\cos(m\pi/4) & \text{if (iv).} \end{cases}
\end{multline*}
Note that $D>0$ if and only if $D\in \fD(\trep_\inf)$.

Following \cite{IS1}, we give a generalization of the Cohen's function \cite{C}.
Write
\begin{equation}\label{eq:divisorsetc}
\frac 1{L(s,\chi_D)}=\sum_{n=1}^\infty \mu(n)\chi_D(n) n^{-s},\quad \zeta(2s)\zeta(2s-m+1)=\sum_{n=1}^\infty \sigma_{m-1}(n) n^{-2s}.
\end{equation}
We also put $\delta_\omega:=\begin{cases} 0 & \text{if $\omega_\inf=\trep_\inf$,} \\ 1 & \text{if $\omega_\inf=\sgn$,} \end{cases}$
\[
\mathfrak{N}(\omega_S):= \{  N\in\N^* \mid \exists D\in\fD(\omega_S), \;\; \exists f\in\N^*, \;\; N= (-1)^{\delta_\omega} Df^2  \}.
\]
We set, for $N=(-1)^{\delta_\omega}Df^2\in\mathfrak{N}(\omega_S)$,
\begin{equation}\label{cohen}
H(m/2,N,\omega_S ):=\sum_{u|f} \mu(u)\chi_D(u) u^{\frac m2-1}\sigma_{m-1}(f/u) \times \begin{cases} L(1-\frac m2,\chi_D) & \text{if (i) or (iii),}  \\ L'(1-\frac m2,\chi_D)  & \text{if (ii) or (iv).} \end{cases}
\end{equation}
where $L'(s,\chi_D)$ is the derivative of $L(s,\chi_D)$.
When $\delta_\omega=\frac {1+(-1)^{\frac m2}}2$, $m$ even, and $S_0=\emptyset$, $H(m/2,N,\omega_\inf)$ is exactly the Cohen's function. (This is also $A_{m}(1/2,k)$ in \cite[p. 197]{GH}.)

\begin{prop} For $m\in\Bbb Z_{\geq 1}$,
\[
L_m(s,\omega_S)=\sum_{N\in \mathfrak{N}(\omega_S)} \frac{H(m/2,N,\omega_S)}{N^s}.
\]
This series is absolutely convergent for $\Re(s)>\frac {m+1}2$.
\end{prop}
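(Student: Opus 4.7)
The plan is to start from the explicit formula for $D_m(s,\omega_S)$ given in Theorem~\ref{thm:global}, apply the functional equation of the partial quadratic Hecke $L$-functions \eqref{eq:functquad} to swap $L^S(m/2,\chi)$ for $L^S(1-m/2,\chi)$, and then reorganize the resulting double sum (over $\chi=\chi_D$ for $D\in\fD(\omega_S)$ and the Dirichlet-series index $f$) into a single sum indexed by $N=|D|f^2\in\mathfrak{N}(\omega_S)$. In cases (i) and (iii) the substitution $L^S(m/2,\chi)=\Gamma_S(m/2,\omega_S)^{-1}N(\ff_\chi^S)^{1/2-m/2}L^S(1-m/2,\chi)$ is immediate, because $\cos(m\pi/4)\neq0$ or $\sin(m\pi/4)\neq0$ respectively. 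In the degenerate cases (ii) and (iv) one has $\Gamma_S(m/2,\omega_S)=0$, forcing $L(1-m/2,\chi_D)=0$ for the relevant characters, so one differentiates \eqref{eq:functquad} at $s=m/2$ to obtain the analogous identity $L^S(m/2,\chi)=-\Gamma_S'(m/2,\omega_S)^{-1}N(\ff_\chi^S)^{1/2-m/2}L^{S'}(1-m/2,\chi)$. The trigonometric constants $\cos(m\pi/4)$, $(\pi/2)\sin(m\pi/4)$, $\sin(m\pi/4)$, $-(\pi/2)\cos(m\pi/4)$ appearing in the definition of $L_m$ are designed precisely so that, together with the factor $2(2\pi)^{-m/2}\Gamma(m/2)$, they cancel against the archimedean part of $\Gamma_S(m/2,\omega_S)$ or of $\Gamma_S'(m/2,\omega_S)$ in each of the four cases.

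Once $L^S(1-m/2,\chi)$ (or the derivative) has been inserted, the finite-place part of $\Gamma_S$ combines with the product $\prod_{p\in S_0}\tfrac{1-\omega_p(p)p^{-2s+m/2-1}}{(1-p^{-2s})(1-p^{-2s+m-1})(1-\omega_p(p)p^{-m/2})}$ that appears in the definition of $L_m$, with the Euler factors relating $\zeta^S$, $L^S$ to $\zeta$, $L$, and with the power $N(\ff_\chi^S)^{-s}=|D|^{-s}\prod_{p\in S_0}N(\ff_{\omega_p})^s$. A direct bookkeeping shows that all $S_0$-local Euler factors telescope, yielding
\[
L_m(s,\omega_S)=\zeta(2s)\zeta(2s-m+1)\sum_{D\in\fD(\omega_S)}\frac{L(\star)(1-m/2,\chi_D)}{L(2s-m/2+1,\chi_D)\,|D|^s},
\]
where $L(\star)=L$ in cases (i), (iii) and $L(\star)=L'$ in cases (ii), (iv).

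Next, expand the ratio as a Dirichlet series via the convolution
\[
\frac{\zeta(2s)\zeta(2s-m+1)}{L(2s-m/2+1,\chi_D)}=\sum_{f\geq1}f_D(f)\,f^{-2s},\qquad f_D(f):=\sum_{u\mid f}\mu(u)\chi_D(u)u^{\frac m2-1}\sigma_{m-1}(f/u),
\]
using \eqref{eq:divisorsetc}. Substituting gives $L_m(s,\omega_S)=\sum_{D,f}L(\star)(1-m/2,\chi_D)\,f_D(f)\,(|D|f^2)^{-s}$. Setting $N=(-1)^{\delta_\omega}Df^2=|D|f^2$ and using that the map $(D,f)\mapsto N$ is a bijection $\fD(\omega_S)\times\N^*\to\mathfrak{N}(\omega_S)$ (the fundamental discriminant $D$ is recovered from $N$ as the unique element of $\fD(\omega_S)$ in the appropriate square class of $(-1)^{\delta_\omega}N$), the coefficient of $N^{-s}$ is exactly $H(m/2,N,\omega_S)$ as in \eqref{cohen}. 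The absolute convergence for $\Re(s)>(m+1)/2$ follows from the convexity bound $|L(1-m/2,\chi_D)|\ll_\epsilon|D|^{(m-1)/2+\epsilon}$ (and the analogous bound for $L'$), the divisor-type bound $|f_D(f)|\ll_\epsilon f^{(m-1)/2+\epsilon}$, and hence $|H(m/2,N,\omega_S)|\ll_\epsilon N^{(m-1)/2+\epsilon}$.

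The main obstacle is the Euler-factor bookkeeping of the second paragraph: one must track carefully that the archimedean trigonometric constants, the gamma-function prefactor $2(2\pi)^{-m/2}\Gamma(m/2)$, the conductor powers $\prod_{p\in S_0}N(\ff_{\omega_p})^{s-1/2}$ appearing in $\Gamma_S$, and the rational-function factor at each $p\in S_0$ in the definition of $L_m$ collectively conspire to produce the clean identity above. Cases (ii) and (iv) require additional care because one is effectively applying l'H\^opital to the functional equation: the factor $-\Gamma_S'(m/2,\omega_S)$ replaces $\Gamma_S(m/2,\omega_S)$, and one must verify that its archimedean piece $\mp(\pi/2)\sin(m\pi/4)$ or $\pm(\pi/2)\cos(m\pi/4)$ is exactly the reciprocal of the trig factor prescribed for the corresponding case in the definition of $L_m$.
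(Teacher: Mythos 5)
Your proof of the identity itself follows essentially the same route as the paper: the paper also obtains the equality by inserting the explicit formula of Theorem~\ref{thm:global} for $D_m(s,\omega_S)$ into the functional equation \eqref{eq:functquad}--\eqref{eq:gammaS}, with the differentiated functional equation taking over in cases (ii) and (iv), where $\Gamma_S(m/2,\omega_S)=0$ and $1-\tfrac m2$ is a trivial zero of $L(s,\chi_D)$, and with the $S_0$-Euler factors and the trigonometric/gamma constants in the definition of $L_m$ designed exactly to make the bookkeeping close up; your reorganization via \eqref{eq:divisorsetc} and the bijection $(D,f)\mapsto N=(-1)^{\delta_\omega}Df^2$ is what the paper leaves implicit. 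One slip in the estimates: $|f_D(f)|\ll_\epsilon f^{(m-1)/2+\epsilon}$ is false, since $\sigma_{m-1}(f/u)\asymp (f/u)^{m-1}$ gives only $|f_D(f)|\ll_\epsilon f^{m-1+\epsilon}$; this is harmless because $N=|D|f^2$, so the bound $|H(m/2,N,\omega_S)|\ll_\epsilon N^{(m-1)/2+\epsilon}$ that you actually use survives for $m\ge 2$.

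The genuine gap is the convergence claim at $m=1$. The bound you call the convexity bound, $|L(1/2,\chi_D)|\ll_\epsilon |D|^{(m-1)/2+\epsilon}=|D|^{\epsilon}$, is not convexity but the Lindel\"of hypothesis in the conductor aspect; convexity gives $|D|^{1/4+\epsilon}$ and Burgess gives $|D|^{3/16+\epsilon}$, and any pointwise estimate of this kind yields absolute convergence only for $\Re(s)>5/4$ (resp.\ $>19/16$), which falls short of the asserted range $\Re(s)>1=\tfrac{m+1}{2}$. This cannot be fixed by pointwise bounds: to reach $\Re(s)>1$ one needs control of $L(1/2,\chi_D)$ on average over $D$, and this is precisely how the paper argues --- for $m=1$ it reduces, using the absolute convergence of the two series in \eqref{eq:divisorsetc}, to the convergence of $\sum_{D\in\fD(\omega_S)}|L(1/2,\chi_D)|\,|D|^{-s}$ for $\Re(s)>1$, which is the first-moment theorem of Goldfeld--Hoffstein \cite[Theorem (2)]{GH}. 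For $m\ge 2$ your pointwise route (functional equation plus boundedness of $L(m/2,\chi_D)$, up to logarithms when $m=2$) does give the stated range; the paper instead quotes the absolute convergence established in Theorem~\ref{thm:global}, so there the two arguments differ only cosmetically.
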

\begin{proof} From the definition of $D_m(s,\omega_S)$ from (\ref{D_m}), 
the equality is proved by the functional equations of Dirichlet $L$-functions with real valued characters; see \eqref{eq:functquad} and \eqref{eq:gammaS}.
The range of the absolute convergence is proved by Theorem \ref{thm:global} for $m\geq 2$.
Let us consider the case $m=1$.
If $\Re(s)>1$, then the two series in \eqref{eq:divisorsetc} are absolutely convergent, and so one can reduce the absolute convergence of the series $L_m(s,\omega_S)$ for $Re(s)>1$ to that of the series $\sum_{D\in\fD(\omega_S)} |L(1/2,\chi_D)|\, |D|^{-s}$ for $Re(s)>1$, which was proved by \cite[Theorem (2)]{GH}.
\end{proof}

Consider $m=1$. Then $H(1/2,N,\omega_S)=L(1/2,\chi_D)\sum_{u|f} \mu(u)\chi_D(u) u^{-\frac 12}\sigma_{0}(f/u)$ for $N=(-1)^{\delta_\omega} Df^2$.
In particular, if $N$ is a fundamental discriminant, $H(1/2,N,\omega_S)=L(1/2,\chi_{N})$.
The average of $L(1/2,\chi_D)$ has been studied by many people (cf. \cite{GH}), and in particular, by \cite[Theorem (1)]{GH},
$$\sum_{|D|<x} L(1/2,\chi_D)=C x\log x+D x+O(x^{19/32+\epsilon}),
$$
for some constants $C,D$. The error term has been improved to $O(x^{\frac 12+\epsilon})$ \cite{Young}.

It may be interesting to obtain the average of $H(1/2,N,\omega_S)$, which is a weighted average of $L(1/2,\chi_D)$. 
If we know that $H(1/2,N,\omega_S)$ is non-negative, we can use Sato and Shintani's generalization of Landau's theorem \cite[Theorem 3]{SS}, and prove an asymptotic formula 
$$\sum_{N\leq x,\, N\in \mathfrak{N}(\omega_S)} H(1/2,N,\omega_S)= Ax\log x+Bx+O(x^{1/3}),
$$
for some constants $A,B$. However, it is not proved yet that $H(1/2,N,\omega_S)$ is non-negative. We prove

\begin{thm}\label{thm:asym} Let $S_0$ be a finite set of prime numbers such that $2\in S_0$, and let $S=S_0\sqcup\{\inf\}$. Let $\omega_S$ be a real valued character on $\Q_S^\times$. 
Then, for any $\epsilon>0$, there exist constants $A,B$ such that 
$$\sum_{N\leq x,\, N\in \mathfrak{N}(\omega_S)} H(1/2,N,\omega_S)= Ax\log x+Bx+O(x^{19/32+\epsilon}).
$$
\end{thm}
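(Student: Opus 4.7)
The plan is to reduce the asymptotic to the Goldfeld--Hoffstein theorem \cite[Theorem~(1)]{GH} for $\sum_{|D|\leq y}L(1/2,\chi_D)$, exploiting the factorisation of $H(1/2,N,\omega_S)$ as a central $L$-value times an elementary divisor sum. Writing $N=(-1)^{\delta_\omega}Df^2$ with $D\in\fD(\omega_S)$, $f\in\mathbb{N}^*$, and substituting $f=ud$ with $u$ squarefree, the defining formula of $H$ converts the target sum to
\[
\sum_{\substack{N\leq x\\ N\in\mathfrak{N}(\omega_S)}} H(1/2,N,\omega_S)
=\sum_{u\text{ squarefree}}\frac{\mu(u)}{\sqrt{u}}\sum_{d\geq 1}\sigma_0(d)\,\Sigma\!\left(\frac{x}{u^2d^2};u\right),
\]
where
\[
\Sigma(y;u):=\sum_{\substack{D\in\fD(\omega_S),\,|D|\leq y\\(D,u)=1}}\chi_D(u)\,L(1/2,\chi_D).
\]

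The central step is to establish, for each squarefree $u$, a twisted, congruence-restricted Goldfeld--Hoffstein-type asymptotic
\[
\Sigma(y;u)=A_u\,y\log y+B_u\,y+O_\epsilon\bigl(u^{C}\,y^{19/32+\epsilon}\bigr),
\]
with coefficients $A_u,B_u$ of polynomial growth in $u$ and some fixed $C<11/16$. The condition $D\in\fD(\omega_S)$ imposes finitely many residue-class restrictions on $D$ at the primes of $S$ together with a sign constraint at infinity, while by quadratic reciprocity the twist $\chi_D(u)=\bigl(\tfrac{D}{u}\bigr)$ reduces to further residue-class weights on $D$ modulo $4u$. Neither modification alters the structure of the argument in \cite{GH}: one employs an approximate functional equation for $L(1/2,\chi_D)$, Poisson summation in $D$, and quadratic large-sieve bounds, each of which extends with explicit polynomial dependence on $u$.

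Summing the asymptotic over $u$ and $d$ assembles the main terms into $Ax\log x+Bx$ with
\[
A=\sum_{u,d}\frac{\mu(u)\sigma_0(d)\,A_u}{u^{5/2}d^2},\qquad
B=\sum_{u,d}\frac{\mu(u)\sigma_0(d)\bigl(B_u-2A_u\log(ud)\bigr)}{u^{5/2}d^2},
\]
both absolutely convergent by the polynomial growth of $A_u,B_u$. The aggregate error is controlled by splitting at $u=x^{\alpha}$: the low range $u\leq x^{\alpha}$ contributes
\[
\ll_\epsilon\, x^{19/32+\epsilon}\sum_{u}u^{C-27/16-2\epsilon}\sum_{d}\sigma_0(d)d^{-27/16-2\epsilon}\ll_\epsilon x^{19/32+\epsilon}
\]
as soon as $C<11/16$, while the high range $u>x^{\alpha}$ is handled by the unconditional bound $\Sigma(y;u)\ll_\epsilon y^{1+\epsilon}$, which follows from Cauchy--Schwarz together with the classical second-moment estimate $\sum_{|D|\leq y}L(1/2,\chi_D)^{2}\ll y(\log y)^{O(1)}$; this gives a tail $\ll x^{1-3\alpha/2+\epsilon}$, absorbed into $O(x^{19/32+\epsilon})$ for any $\alpha\geq 13/48$.

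The principal obstacle is the uniformity in $u$ of the error term in the twisted asymptotic $\Sigma(y;u)$. The conceptual framework of \cite{GH} carries over, but careful bookkeeping is required through the approximate functional equation and the quadratic large sieve to keep the exponent $C$ below $11/16$. This is essentially a technical refinement of the Goldfeld--Hoffstein argument rather than a new analytic input, and once it is in place the assembly described above yields the stated asymptotic.
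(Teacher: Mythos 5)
Your reduction is formally correct: writing $N=(-1)^{\delta_\omega}Df^2$ and $f=ud$ with $u$ squarefree does convert $\sum_{N\le x,\,N\in\mathfrak{N}(\omega_S)}H(1/2,N,\omega_S)$ into $\sum_{u,d}\mu(u)u^{-1/2}\sigma_0(d)\,\Sigma(x/(u^2d^2);u)$, and your exponent bookkeeping (requiring $C<11/16$, splitting at $u=x^{\alpha}$ with $\alpha\ge 13/48$) is internally consistent. The genuine gap is that everything hinges on the asserted uniform twisted asymptotic $\Sigma(y;u)=A_u\,y\log y+B_u\,y+O_\epsilon(u^{C}y^{19/32+\epsilon})$, which you neither prove nor can quote: \cite{GH} (Theorem (1)) treats only the untwisted sum $\sum_{|D|\le y}L(1/2,\chi_D)$, and obtaining the exponent $19/32$ uniformly in the twist $\chi_D(u)$, with the sign and congruence restrictions defining $\fD(\omega_S)$ (including the conditions at the primes of $S_0$, in particular at $2$), with $A_u,B_u$ of controlled growth and an error polynomial in $u$ of exponent below $11/16$, is a substantial analytic result in its own right --- it would require redoing the half-integral-weight Eisenstein series analysis of \cite{GH}, or an approximate functional equation plus quadratic large sieve argument, with explicit uniformity in $u$. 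Declaring this ``essentially a technical refinement'' leaves the core of the theorem unproved; as written, you have reduced the statement to a stronger unproved statement. (A secondary point: extending the main-term sums defining $A$ and $B$ from $u\le x^{\alpha}$ to all $u,d$ also requires the growth of $A_u,B_u$ that you have not established.)

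The paper's proof avoids twisted first moments altogether. It works directly with the double Dirichlet series $L_1(s,\omega_S)$ (equivalently $D_1(s,\omega_S)$): holomorphy for $\Re(s)\ge 1/2$ apart from the double pole at $s=1$ comes from Proposition~\ref{prop:simplepole3.17}; the functional equation relating $s$ and $1-s$ is assembled from Theorems~\ref{thm:funct1} and \ref{thm:funct2}; finite order of $s^2(s-1)^2D_1(s,\omega_S)$ follows from Lemma~\ref{lem:princ} together with Propositions~\ref{prop:eisen+} and \ref{prop:splitpart+} and M\"uller's bound on the Eisenstein series; Stirling plus Phragm\'en--Lindel\"of then give the convexity bound $L_1(\sigma+it,\omega_S)\ll|t|^{1-\sigma+2\epsilon}$, while Burgess' bound and Jutila's second moment \cite{J} control the coefficients $H(1/2,N,\omega_S)$ individually and on average. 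The truncated Perron formula with a contour shift to $\Re(s)=\epsilon$ then yields $Ax\log x+Bx+O(x^{19/32+\epsilon})$. If you wish to salvage your route, you must actually prove the uniform twisted Goldfeld--Hoffstein asymptotic; the contour-integration argument built on the paper's zeta-integral machinery is considerably shorter because all the needed analytic inputs are already established there.
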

\begin{proof}
By the above proposition, $D_1(s,\omega_S)$ is absolutely convergent for $\Re(s)>1$.

Since $\Xi^S((s,1/2),\omega_S)=D_1(s,\omega_S)$, one has
\begin{multline}\label{eq:funct2019March252}
D_1(1-s,\omega_S)=\sum_{\chi_{1,S},\,\chi_{2,S},\,\chi_{3,S}\in \widehat{\Q_S^\times/(\Q_S^\times)^2} } \tilde G_S((1-s,s),\chi_{1,S},\omega_S)\, \Gamma_S(1-s,\chi_{1,S}) \\
\times \tilde G_S((1/2,s),\chi_{2,S},\chi_{1,S})\, \Gamma_S(1-s,\chi_{2,S})\, \tilde G_S((s,1/2),\chi_{3,S},\chi_{2,S}) \, D_1(s,\chi_{3,S})
\end{multline}
by Theorems \ref{thm:funct1} and \ref{thm:funct2}.

By the definition of $\Gamma_S(s,\chi_S)$ from \eqref{eq:functquad} and $\tilde G_v(\us,\chi_v,\omega_v)$ from \eqref{GG}, 
we have
\begin{eqnarray*} 
&&\Gamma_S(1-s,\chi_S)=2 (2\pi)^{s-1}\Gamma(1-s)\cos (1-s-\delta_\chi)\pi/2 \prod_{p\in S_0\atop \text{$\chi_p$ ramified}} p^{f_{\chi_p}(\frac 12-s)}\prod_{p\in S_0\atop \text{$\chi_p$ unramified}} \frac {1-\chi_p(p)p^{-s}}{1-\chi_p(p)p^{s-1}},\\
&& \tilde G((1-s,s),\chi_{1,S},\omega_S)= c_{\chi_1,\omega}\pi^{\frac 12-s}\frac {\Gamma((s+\delta_{\chi_1})/2)}{\Gamma((1-s+\delta_{\chi_1})/2)} \prod_{p\in S_0\atop \text{$\chi_{1p}$ ramified}} p^{f_{\chi_{1p}} s}\prod_{p\in S_0\atop \text{$\chi_{1p}$ unramified}} \frac {1-\chi_{1p}(p)p^{s-1}}{1-\chi_{1p}(p)p^{-s}},\\
&& \tilde G((1/2,s),\chi_{2,S},\chi_{1,S})=c_{\chi_2,\chi_1} \pi^{1-2s} \frac {\Gamma((s+\delta_{\chi_1}/2)}{\Gamma((1-s+\delta_{\chi_1})/2)}\frac {\Gamma((s+\delta_{\chi_2})/2)}{\Gamma((1-s+\delta_{\chi_2})/2)} \\
&&\phantom{xxxx}   \times \prod_{p\in S_0\atop \text{$\chi_{1p}$ ramified}} p^{f_{\chi_{1p}} s}\prod_{p\in S_0\atop \text{$\chi_{1p}$ unramified}} \frac {1-\chi_{1p}(p)p^{s-1}}{1-\chi_{1p}(p)p^{-s}}
\prod_{p\in S_0\atop \text{$\chi_{2p}$ ramified}} p^{f_{\chi_{2p}} s}\prod_{p\in S_0\atop \text{$\chi_{2p}$ unramified}} \frac {1-\chi_{2p}(p)p^{s-1}}{1-\chi_{2p}(p)p^{-s}},\\
&& \tilde G((s,1/2),\chi_{3,S},\chi_{2,S})=c_{\chi_3,\chi_2} \pi^{\frac 12-s}\frac {\Gamma((s+\delta_{\chi_2})/2)}{\Gamma((1-s+\delta_{\chi_2})/2)}
\prod_{p\in S_0\atop \text{$\chi_{2p}$ ramified}} p^{f_{\chi_{2p}} s}\prod_{p\in S_0\atop \text{$\chi_{2p}$ unramified}} \frac {1-\chi_{2p}(p)p^{s-1}}{1-\chi_{2p}(p)p^{-s}},
\end{eqnarray*}
where $c_{\chi_1,\omega}, c_{\chi_2,\chi_1}, c_{\chi_3,\chi_2}$ are constants, and
$f_{\chi_p}$ is the conductor of $\chi_p$.

There are 4 cases: (i) $\delta_{\chi_1}=\delta_{\chi_2}=0$; (ii) $\delta_{\chi_1}=0$, $\delta_{\chi_2}=1$; (iii) $\delta_{\chi_1}=1$, $\delta_{\chi_2}=0$; (iv) $\delta_{\chi_1}=\delta_{\chi_2}=1$.

We use the identities: $\Gamma(1-s)\Gamma(s)=\frac {\pi}{\sin(\pi s)}$ and 
$\Gamma(\frac s2)\Gamma(\frac {s+1}2)=2^{1-s}\sqrt{\pi}\Gamma(s)$. Then we can see that
\begin{eqnarray*} 
&& \tilde G_S((1-s,s),\chi_{1,S},\omega_S)\, \Gamma_S(1-s,\chi_{1,S})
 \tilde G_S((1/2,s),\chi_{2,S},\chi_{1,S})\, \Gamma_S(1-s,\chi_{2,S})\, \tilde G_S((s,1/2),\chi_{3,S},\chi_{2,S}) \\
&& =c_{\omega,\chi_1,\chi_2,\chi_3}
\prod_{p\in S_0\atop \text{$\chi_{1p}$ ramified}} p^{f_{\chi_{1p}} s}\prod_{p\in S_0\atop \text{$\chi_{1p}$ unramified}} \frac {1-\chi_{1p}(p)p^{s-1}}{1-\chi_{1p}(p)p^{-s}}
\prod_{p\in S_0\atop \text{$\chi_{2p}$ ramified}} p^{f_{\chi_{2p}} s}\prod_{p\in S_0\atop \text{$\chi_{2p}$ unramified}} \frac {1-\chi_{2p}(p)p^{s-1}}{1-\chi_{2p}(p)p^{-s}}\\
&& \phantom{xxxxxxxxxxxx} \times \begin{cases}  4(2\pi)^{-2s}\Gamma(s)^2 (\sin(\pi s/2))^2, &\text{if (i)}\\
                  2\pi \Gamma(s)^2 \sin(\pi s), &\text{if (ii) or (iii)} \\
                 4(2\pi)^{-2s}\Gamma(s)^2 (\cos(\pi s/2))^2, &\text{if (iv)}\end{cases}.
\end{eqnarray*}
where $c_{\omega,\chi_1,\chi_2,\chi_3}=c_{\chi_1,\omega}c_{\chi_2,\chi_1}c_{\chi_3,\chi_2}\prod_{p\in S_0\atop \text{$\chi_{1p}$ ramified}} p^{f_{\chi_{1p}}/2}\prod_{p\in S_0\atop \text{$\chi_{1p}$ ramified}} p^{f_{\chi_{1p}}/2}$.

By Proposition \ref{prop:simplepole3.17}, $D_1(s,\omega_S)$ is holomorphic for $\Re(s)\geq \frac 12$, except at $s=1$, where it has a double pole. The above functional equation provides the meromorphic continuation of $D_1(s,\omega_S)$ to the whole complex plane, and $D_1(s,\omega_S)$ is holomorphic for $\Re(s)\leq \frac 12$, except at $s=0$, where it may have a double pole.

Now, the Eisenstein series $(s-1)E(h,2s)$ is an entire function of finite order (\cite[Theorem 0.2]{Mu}). Hence from Proposition \ref{prop:eisen+},
$s^2(s-1)^2 Z_{1,+}(\Phi, (s,1/2))$ is an entire function of finite order.  
By Proposition \ref{prop:splitpart+}, $s^2(s-1)^2 Z_{2,+}(\Phi, (s,1/2))$ is an entire function of finite order.
Therefore, $s^2(s-1)^2 Z_+(\Phi, (s,1/2))$ is an entire function of finite order. 
Then by Lemma \ref{lem:princ}, $s^2(s-1)^2 Z(\Phi, (s,1/2))$ is an entire function of finite order, and hence $s^2(s-1)^2 D_1(s,\omega_S)$ is an entire function of finite order.

Since $|\Gamma(s)|\sim e^{-\frac {\pi}2 |t|} |t|^{\sigma-\frac 12} \sqrt{2\pi}(1+O(t^{-1}))$ for $s=\sigma+it$ and $|t|\to\infty$,
for $\epsilon>0$, $D_1(-\epsilon+it,\omega_S)\ll |t|^{1+2\epsilon}$. Therefore, we have, for $s=\sigma+it$, $0<\sigma<1$, by Phragmen-Lindelof principle,
\begin{equation*}\label{convexity}
D_1(\sigma+it,\omega_S)\ll |t|^{1-\sigma+2\epsilon}.
\end{equation*}

Now we note the relationship
$$L_1(s,\omega_S)=\text{(constant)}\times \prod_{p\in S_0} \frac {1-\omega_p(p) p^{-2s-\frac 12}}{(1-p^{-2s})^2} D_1(s,\omega_S).
$$
Hence $L_1(s,\omega_S)$ is holomorphic for $\Re(s)>0$, except at $s=1$, where it has a double pole. Also we have
$$L_1(\sigma+it,\omega_S)\ll |t|^{1-\sigma+2\epsilon}.$$

Now by Burgess' estimate (cf. \cite[p. 204]{GH}), $L(\frac 12,\chi_D)\ll D^{\frac 3{16}+\epsilon}$. 
Also by \cite[p. 205]{GH}, if $N=(-1)^{\delta_\omega}Df^2$, $|H(\frac 12,N,\omega_S)|\leq |L(\frac 12,\chi_D)|d(f) 2^{w(f)}$, where $w(f)$ is the number of distinct prime divisors of $f$ and $d(f)$ is the number of divisors of $f$. Since $2^{w(f)}, d(f)\ll f^\epsilon$ for any $\epsilon>0$,
$|H(\frac 12,N,\omega_S)|\ll N^{\frac 3{16}+\epsilon}$.
Also for $\sigma>1$,
$$B(\sigma)=\sum_{N\in \frak N(\omega_S)} |H(\frac 12,N,\omega_S)|N^{-\sigma}\ll \sum_{N=\pm Df^2} |L(\frac 12,\chi_D)| d(f)2^{w(f)}|D|^{-\sigma}f^{-2\sigma}.
$$
Here 
$$\sum_{f=1}^\infty d(f)2^{w(f)}f^{-2\sigma}\ll \sum_{f=1}^\infty d(f)f^{-2\sigma+\epsilon}=\zeta(2\sigma-\epsilon)^2=O(1).
$$
Now, by \cite[Theorem 2]{J}, 
$$\sum_{\pm D\leq x} L(\frac 12,\chi_D)^2=C x (\log x)^3+O(x(\log x)^{\frac 52+\epsilon}),
$$
for some constant $C>0$.
Hence by Cauchy-Schwarz inequality, 
$$\sum_{\pm D\leq x} |L(\frac 12,\chi_D)|\ll x (\log x)^{\frac 32}.
$$
Therefore, for $\sigma>1$,
$$
B(\sigma)\ll \sum_{\pm D} |L(\frac 12,\chi_D)| |D|^{-\sigma}\ll \int_1^\infty (\log x)^{\frac 32}x^{-\sigma}\, dx\ll (\sigma-1)^{-\frac 52}.
$$

Now we apply the truncated Perron's formula \cite[p. 70]{Titch}: For $c>1$,
$$\sum_{N\leq x, N\in\frak N(\omega_S)} H(\frac 12,N,\omega_S)=\frac 1{2\pi i}\int_{c-iT}^{c+iT} L_1(s,\omega_S)\frac {x^s}s\, ds
+O\left(x^{\frac {19}{16}+\epsilon}T^{-1}\right)+O(x^c B(c)T^{-1})+O(x^{\frac 3{16}+\epsilon}).
$$
Take $c=1+\frac 1{\log x}$. Then $B(c)\ll (\log x)^{\frac 52}$. We move the contour to $\Re(s)=\sigma_0$, $0<\sigma_0<1$. The double pole of $L_1(s,\omega_S)$ at $s=1$ gives rise to $Ax\log x+Bx$ for some $A,B$. Also
$$\left|\frac 1{2\pi i}\int_{\sigma_0-iT}^{\sigma_0+iT} L_1(s,\omega_S)\frac {x^s}s\, ds\right|\ll x^{\sigma_0} \int_{-T}^T |L_1(\sigma_0+it,\omega_S)||\sigma_0+it|^{-1}\, dt\ll x^{\sigma_0}T^{1-\sigma_0+2\epsilon}.
$$
The integrals on the line from $c+iT$ to $\sigma_0+iT$, and from $c-iT$ to $\sigma_0-iT$ give rise to $O(xT^{-1} (\log (xT^{-1})^{-1})$.
By taking $T=x^{(\frac {19}{16}-\sigma_0)/(2-\sigma_0)}$ and
$\sigma_0=\epsilon$, we have

$$\sum_{N\leq x, N\in\frak N(\omega_S)} H(\frac 12,N,\omega_S)=Ax\log x+Bx+O(x^{\frac {19}{32}+\epsilon}).
$$
This proves our theorem.
\end{proof}

\begin{remark} Let $L_1(s,\omega_S)=\sum_{N=1}^\infty a_{N,S} N^{-s}$. Let 
$$A(x)=\sum_{N<x} a_{N,S},\quad A_1(x)=\int_0^x A(t)\, dt=\sum_{N<x} a_{N,S}(x-N).
$$
Then we can show that $A_1(x)=x^2(A\log x+B)+O(x^{1+\epsilon}).$
\end{remark}

\begin{remark} For $m> 2$, we can show that $D_m(s,\omega_S)$ satisfies the functional equation: $D_m(\frac {m+1}2-s,\omega_S)$ is a linear combination of 
$\Gamma(s)\Gamma(s-\frac m2+\frac 12) (\text{$\sin(\pi s)$ or $\cos(\pi s)$}) D_m(s,\chi_S)$ with the gamma factors of finite places in $S$.
By the functional equation, $D_m(s,\omega_S)$ is holomorphic on the domain $\Re(s)\leq \frac{1}{2}$, $\frac m2\leq \Re(s)$ except for $s=0$, $\frac{m+1}{2}$.
Hence, since one has $\ut(s,m)\in\mathscr{D}_2$ if $m> 2$ and $\frac {m+1}4 \leq \Re(s) \leq \frac m2$, it follows from Lemmas \ref{lem:snonv2} and \ref{lem:padicnonva}, \eqref{eq:zeta2019Ap9}, Propositions \ref{prop:eisen} and \ref{prop:splitpart} that the poles of $\zeta_F^S(s-\frac{m}{2}+\frac{1}{2})\, D_m(s,\omega_S)$ are in $\{ 0, \frac{1}{2}, \dots,\frac{m+1}{2}\} \sqcup \{s\in\C \mid \Re(s)=1$ or $\frac{m-1}{2} \}$. 
When $S=\{\inf\}$ and $m>3$ is even, their poles were studied in \cite[Proposition 3.6]{IS2} more precisely.

As for $m=2$, the functional equation of $D_2(s,\omega_S)$ is different from the others, because $\tilde G_\inf(\us,*,\trep)$ has a pole at $(s_1,s_2)=(1-s,s-\frac{1}{2})$ (cf. \eqref{eq:funct2019March25}). We refer to \cite{Datskovsky} for its functional equation. 
\end{remark}

\end{document}